\numberwithin{equation}{section}
\numberwithin{figure}{section}
\theoremstyle{definition}
\newtheorem{defn}{\protect\definitionname}[section]
\theoremstyle{remark}
\newtheorem{rem}{\protect\remarkname}[section]
\theoremstyle{definition}
\newtheorem{example}{\protect\examplename}[section]
\theoremstyle{plain}
\newtheorem{prop}{\protect\propositionname}[section]
\theoremstyle{plain}
\newtheorem{conjecture}{\protect\conjecturename}[section]
\theoremstyle{plain}
\newtheorem{thm}{\protect\theoremname}[section]
\theoremstyle{plain}
\newtheorem{lem}{\protect\lemmaname}[section]
\theoremstyle{plain}
\newtheorem{cor}{\protect\corollaryname}[section]
\providecommand{\conjecturename}{Conjecture}
\providecommand{\definitionname}{Definition}
\providecommand{\examplename}{Example}
\providecommand{\lemmaname}{Lemma}
\providecommand{\propositionname}{Proposition}
\providecommand{\remarkname}{Remark}
\providecommand{\theoremname}{Theorem}
\providecommand{\corollaryname}{Corollary}
\begin{document}
\title{Path Developments and Tail Asymptotics of Signature for Pure Rough
Paths}
\author{Horatio Boedihardjo\thanks{Department of Mathematics and Statistics, University of Reading,
Reading RG6 6AX, United Kingdom. Email: h.s.boedihardjo@reading.ac.uk.}, Xi Geng\thanks{School of Mathematics and Statistics, University of Melbourne, Melbourne VIC 3010, Australia. Email: xi.geng@unimelb.edu.au.} and Nikolaos P. Souris\thanks{Department of Mathematics and Statistics, University of Reading,
Reading RG6 6AX, United Kingdom. Email: n.souris@reading.ac.uk.}}
\date{}
\maketitle
\begin{abstract}
Solutions to linear controlled differential equations can be
expressed in terms of iterated path integrals of the driving path.
This collection of iterated integrals encodes essentially all information
about the driving path. While upper bounds for iterated path integrals are
well known, lower bounds are much less understood, and it is
known only relatively recently that some type of asymptotics for the $n$-th
order iterated integral can be used to recover some
intrinsic quantitative properties of the path, such as the length of $C^1$ paths.

In the present paper, we investigate the simplest type of rough paths
(the rough path analogue of line segments), and establish uniform
upper and lower estimates for the tail asymptotics of iterated integrals
in terms of the local variation of the path. Our methodology, which
we believe is new for this problem, involves developing paths into
complex semisimple Lie algebras and using the associated representation
theory to study spectral properties of Lie polynomials under the Lie
algebraic development.
\end{abstract}

\newpage
\tableofcontents

\section{Introduction}

Controlled differential equations of the form 
\begin{equation}
dY_{t}=\sum_{i=1}^{d}V_{i}(Y_{t})dX_{t}^{i}\label{eq: ContEqn}
\end{equation}
where $V_{i}:\mathbb{R}^{N}\rightarrow\mathbb{R}^{N}$, $X:[0,T]\rightarrow\mathbb{R}^{d},$
$Y:[0,T]\rightarrow\mathbb{R}^{N}$, frequently appear in many interesting
problems in stochastic analysis and applications to stochastic modelling (cf. \cite{CJY09}, \cite{Kallianpur80}, \cite{Oksendal13}, \cite{YZ99} and the references therein). The most well known and fundamental
example is perhaps when $X_{t}$ is a Brownian motion. The rough path
theory initiated by Lyons \cite{Lyons98} and further developed by
many authors (cf. \cite{Davie07}, \cite{Gubinelli04}, \cite{Gubinelli10}), identifies a wide class of ``rough'' paths including Brownian motion for which
the equation (\ref{eq: ContEqn}) is well defined. The theory is analytically
consistent with the classical viewpoint, in the sense that it is
a continuous extension of the Lebesgue-Stieltjes theory with respect
to the rough path topology and reduces to the classical setting when
the underlying paths have finite lengths. Rough path theory naturally motivates the
study of analytic properties of solutions to (\ref{eq: ContEqn})
driven by rough paths.

One particularly tractable class of examples is when the vector fields
$(V_{i})_{1\leqslant i\leqslant d}$ are linear. In this case, the
solution at time $t=T$ can be represented explicitly as 
\[
Y_{T}=\sum_{n=0}^{\infty}\sum_{i_{1},\cdots,i_{n}=1}^{d}V_{i_{1}}\cdots V_{i_{n}}(Y_{0})\cdot\int_{0<t_{1}<\cdots<t_{n}<T}dX_{t_{1}}^{i_{i}}\cdots dX_{t_{n}}^{i_{n}}.
\]
In particular, $Y_{T}$ depends on the driving path $X$ through the
collection of iterated coordinate integrals 
\[
S(X)\triangleq\left\{ \int_{0<t_{1}<\cdots<t_{n}<T}dX_{t_{1}}^{i_{1}}\cdots dX_{t_{n}}^{i_{n}}:\ n\geqslant1,\ 1\leqslant i_{1},\cdots,i_{n}\leqslant d\right\} .
\]
For algebraic reasons, it is useful to think of this collection as
a single element of the infinite tensor algebra $T((\mathbb{R}^d))\triangleq\prod_{n=0}^\infty (\mathbb{R}^d)^{\otimes n}$,
more intrinsically as 
\[
S(X)=1+\sum_{n=1}^{\infty}\int_{0<t_{1}<\cdots<t_{n}<T}dX_{t_{1}}\otimes\cdots\otimes dX_{t_{n}}.
\]
This tensor element $S(X)$, known as the \textit{signature} of the path $X$,
plays an essential role in rough path theory. The significance and
usefulness of path signature is based on a fundamental theorem which asserts that every
(weakly geometric) rough path is uniquely determined by its signature
up to tree-like pieces (cf. \cite{HL10} and \cite{BGLY16}). However, the proof of this uniqueness result is non-constructive
and does not contain information about how one can reconstruct
a rough path from its signature. The general reconstruction problem was studied
by many authors (cf. \cite{LX18}, \cite{Geng17}, \cite{Chang18}).\\

On the other hand, combining with algebraic properties of signature,
the uniqueness result ensures that essentially all information about
the rough path is encoded in the \textit{tail} of its signature, i.e. when
looking at the component $\int dX_{t_{1}}\otimes\cdots\otimes dX_{t_{n}}$
in the asymptotics as $n\rightarrow\infty.$ An interesting
question arises naturally as follows. \\
\\\textit{Question: Are there explicit and elegant formulae allowing
us to recover intrinsic quantities of the path from its signature
tail asymptotics?}\\

The study of this question begins by observing the following elementary estimate

\begin{equation}
\left\Vert \int_{0<t_{1}<\cdots<t_{n}<T}dX_{t_{1}}\otimes\cdots\otimes dX_{t_{n}}\right\Vert \leqslant\frac{\|X\|_{1\text{-var}}^{n}}{n!}\label{eq: BVFactEst}
\end{equation}
when $X$ has finite length. A surprising and highly non-trivial fact is
that this simple estimate becomes asymptotically sharp as $n\rightarrow\infty,$
at least for the class of $C^{1}$ paths. In a precise and elegant
way, it was shown by Hambly-Lyons \cite{HL10}, and subsequently
by Lyons-Xu \cite{LX15} that the tail asymptotics of the normalized
signature recovers the length of a $C^{1}$ path with unit speed parametrization:

\begin{equation}
\lim_{n\rightarrow\infty}\left(n!\left\Vert \int_{0<t_{1}<\cdots<t_{n}<T}dX_{t_{1}}\otimes\cdots\otimes dX_{t_{n}}\right\Vert \right)^{\frac{1}{n}}=\|X\|_{1\text{-var}}.\label{eq: BVLengthConj}
\end{equation}
Whether the same formula is true for general paths with finite length
remains an important and challenging open problem.

The rough path analogue of the factorial estimate (\ref{eq: BVFactEst})
becomes much deeper, and the following type of uniform upper estimate for rough
paths with finite $p$-variation was due to Lyons \cite{Lyons98}
(cf. Theorem \ref{thm: LyonsExt} below):
\[
\left\Vert \int_{0<t_{1}<\cdots<t_{n}<T}dX_{t_{1}}\otimes\cdots\otimes dX_{t_{n}}\right\Vert \leqslant\frac{C_{p}\cdot \|\mathbf{X}\|^n_{p\text{-var}}}{(n/p)!}. \label{eq:FactorialDecayRough}
\]
If one
believes that the above estimate is asymptotically sharp as $n\rightarrow\infty$ for paths whose intrinsic roughness is $p$, we are naturally led to considering
the quantity
\begin{equation}
L_{p}({\bf X})\triangleq\limsup_{n\rightarrow\infty}\left(\left(\frac{n}{p}\right)!\left\Vert \int_{0<t_{1}<\cdots<t_{n}<T}dX_{t_{1}}\otimes\cdots\otimes dX_{t_{n}}\right\Vert \right)^{\frac{p}{n}}\label{eq: SigTailIntro}
\end{equation}
constructed from the tail of signature, and looking
for its connection with intrinsic
properties of the path ${\bf X}$. The quantity $L_p(\mathbf{X})$ certainly does not recover the usual
$p$-variation since $L_{p}({\bf X})=0$ for any bounded variation
path (see \ref{eq: BVFactEst}). The first hint about the meaning of $L_{p}({\bf X})$ was provided
by Boedihardjo and Geng \cite{BG17}, in which the authors showed that,
when ${\bf X}$ is a Brownian motion and $p=2$, with probability
one $L_{p}({\bf X})$ is a deterministic constant multiple of the
quadratic variation of Brownian motion. To some extent, this is  suggesting
that, $L_{p}({\bf X})$ may be intimately related to certain notion of \textit{local} $p$-variation defined in a similar way to the usual $p$-variation but along partitions with arbitrarily fine scales. \\

The main goal of the present paper is to investigate this problem
at a precise quantitative level for the simplest class of deterministic
rough paths resembling line segments. These paths, also known as \textit{pure
rough paths}, are of the form ${\bf X}_{t}=\exp(tl)$ ($0\leqslant t\leqslant1$)
where $l$ is a Lie polynomial of degree $m\geqslant1$. If $m=1,$ ${\bf X}_{t}$
becomes a classical line segment represented by the vector $l\in V$. In general, ${\bf X}_{t}$
carries an intrinsic roughness of $m$.

We are going to show that, for any pure rough path ${\mathbf{X}_t=\exp(tl)}$ over
$\mathbb{R}^{d}$ with roughness $m$, under the projective tensor
norm, the signature tail asymptotics $L_{m}({\bf X})$ defined by
(\ref{eq: SigTailIntro}) with $p=m$ is precisely related to the
highest degree component $l_m$ of the Lie polynomial $l$
 through the estimate 
\begin{equation}
c(m,d)\cdot\| l_m\|\leqslant L_{m}({\bf X})\leqslant\|l_m\|,\label{eq: UpperLower}
\end{equation}
where $c(m,d)\in (0,1]$ is a constant depending only on the roughness
$m$ and the dimension $d$ which also admits an explicit lower estimate.
The quantity $\|l_m\|$ is related to a notion of local $m$-variation of $\mathbf{X}$ 
as seen from Proposition \ref{prop: LocalPVar} below.
When $d=2$ and $m=2,3$, we have $c(m,d)=1$ and therefore
\begin{equation}\label{eq: PureConj}
L_{m}({\bf X})=\|l_m\|.
\end{equation}The same conclusion also holds for some cases in degrees $m=4,5$. The precise
formulation of our main result is given by Theorem \ref{thm:MainThm}
below. On the other hand, if one works with the Hilbert-Schmidt tensor norm, there is
also a class of pure rough paths for which $c(m,d)=1$. We conjecture that the formula (\ref{eq: PureConj}) is true
for arbitrary pure rough paths. 

Our proof of the upper bound in (\ref{eq: UpperLower}) relies on
combinatorial arguments. The core of our work, which lies in establishing
a matching lower bound, is a novel method based on the representation
theory of complex semisimple Lie algebras. To be more precise, our
starting point is a general representation of the tensor algebra that
allows us to develop paths onto an automorphism group from Cartan's
viewpoint. Specific choices of such representations were already used
by Hambly-Lyons \cite{HL10} and Lyons-Xu \cite{LX15} for proving (\ref{eq: BVLengthConj})
for $C^{1}$ paths, and also by Chevyrev-Lyons \cite{CL16} and Lyons-Sidorova \cite{LS06} for studying other signature-related properties.
The key ingredient in our approach, is to allow such representation
factor through a complex semisimple Lie algebra $\mathfrak{g}$ and develop the highest
degree component of Lie polynomials into a so-called Cartan subalgebra of $\mathfrak{g}$.
It turns out that, under this semisimple picture, the associated representation theory enables us to study spectral
properties of the highest degree Lie component in an effective and quantitative way. We explain the strategy and elaborate these points more precisely in Section \ref{subsec: LowerBdd}
as we develop the mathematical details.

It is also worthwhile to mention that, as an immediate application of our methodology, one can prove a separation of points property for path signatures. More specifically,  if $g_1$ and $g_2$ are two distinct group-like elements as the signatures of two different rough paths over $\mathbb{R}^d$, then one can find a finite dimensional development $\Phi:\mathbb{R}^d\rightarrow\mathrm{End}(W)$ (cf. Definition \ref{def: AlgDev} below) such that $\Phi(g_1)\neq \Phi(g_2)$. The precise formulation and proof of this fact is given in Corollary \ref{cor: SepProp} below. Such a separation property was first obtained by Chevyrev-Lyons \cite{CL16} as the key point of proving their uniqueness result for the expected signature of stochastic processes.\\
\\
\textbf{Organization of the paper.} In Section \ref{sec: Background},
we recall some basic notions from rough path theory and
then formulate our main result in Theorem \ref{thm:MainThm}. In Section
\ref{sec: SpecExam}, we give some heuristics on the underlying problem
by discussing some special examples. Another result that complements
our main result is stated in Theorem \ref{thm: FreeLie}. In Section
\ref{sec: MainProof}, we develop the proof of our main result. Section
\ref{subsec: Upper} is devoted to the upper estimate, and Section \ref{subsec: LowerBdd}
is devoted to the lower estimate in which we divide the proof into
several intermediate steps and results. In Section
\ref{sec: FreeLie}, we give the independent proof of Theorem \ref{thm: FreeLie}.

\section{\label{sec: Background}Notions from rough path theory and statement of main
result}

In this section, we recall some basic ideas and notions from the rough
path theory developed by Lyons \cite{Lyons98}. We refer the reader
to the monographs by Lyons-Qian \cite{LQ02} and Friz-Victoir \cite{FV10} for a systematic introduction.
After that, we formulate the main result of the present paper.

\subsection{The rough path structure}

The fundamental insight of rough path theory is that, beyond certain
level of regularity, the structure encoded in a given path living in some
Banach space $V$ becomes no longer sufficient for yielding an analytically
consistent notion of integration and differential equations, and thus higher
order structures (iterated path integrals) need to be specified along with the underlying path
as a priori information. Mathematically, a rough path should be viewed
as a generic path living inside some tensor group
in which the state space $V$ is embedded as the first order structure.

Let $(V, \| \cdot \|)$ be a given fixed Banach space over $\mathbb{F}=\mathbb{R}$
or $\mathbb{C}$. 

\begin{defn}
A sequence $\{\|\cdot\|_{V^{\otimes_{a}m}}:m\geqslant1\}$
of norms on the algebraic tensor products $\{V^{\otimes_{a}m}:m\geqslant1\}$
are called \textit{reasonable tensor algebra norms} if\\
\\
(i) $\|\cdot\|_{V}=\|\cdot\|$;\\
(ii) $\|\xi\otimes\eta\|_{V^{\otimes_{a}(m+n)}}\leqslant\|\xi\|_{V^{\otimes_{a}m}}\cdot\|\eta\|_{V^{\otimes_{a}n}}$
for $\xi\in V^{\otimes_{a}m}$ and $\eta\in V^{\otimes_{a}n}$;\\
(iii) $\|\phi\otimes\psi\|\leqslant\|\phi\|\cdot\|\psi\|$ for $\phi\in(V^{\otimes_{a}m})^{*}$
and $\psi\in(V^{\otimes_{a}n})^{*}$ where the norms are the induced
dual norms;\\
(iv) $\|P^{\sigma}(\xi)\|_{V^{\otimes_{a}m}}=\|\xi\|_{V^{\otimes_{a}m}}$
for $\xi\in V^{\otimes_{a}m}$ and $\sigma$ being a permutation of
order $m$, where $P^{\sigma}$ is the permutation operator induced by $\sigma$ on $m$-tensors.
\end{defn}

It is known that the inequalities in (ii) and (iii) automatically become
equalities (cf. \cite{DU77}). The completion of $V^{\otimes_{a}m}$ under $\|\cdot\|_{V^{\otimes_{a}m}}$
is denoted as $(V^{\otimes m},\|\cdot\|_{V^{\otimes m}}).$

Examples of reasonable tensor algebra norms include the projective tensor norm, the injective tensor norm, and the Hilbert-Schmidt tensor norm if $V$ is a Hilbert space. Since the projective tensor norm is mostly relevant to us, we recall its definition here. Given $\xi\in V^{\otimes_a m}$, the \textit{projective tensor norm} of $\xi$ is defined by\[
\|\xi\|_{{\rm proj}}\triangleq\inf\left\{ \sum_{i=1}^{r}\|v_{1}^{i}\|\cdots\|v_{m}^{i}\|:\xi=\sum_{i=1}^{r}v_{1}^{i}\otimes\cdots\otimes v_{m}^{i}\ \text{with }r\geqslant1,\!v_{j}^{i}\in V\right\}.
\] Given a fixed norm on $V$, the associated projective tensor norm is the largest among all reasonable tensor algebra norms. It admits the following dual characterization (cf. \cite{Ryan02}): 
\begin{equation}
\|\xi\|_{{\rm proj}}=\sup\left\{ |B(\xi)|:B\in{\cal L}(V\times\cdots\times V;\mathbb{R}),\ \|B\|\leqslant1\right\} .\label{eq: DualProj}
\end{equation}
When $V=\mathbb{R}^d$ is equipped with the $l^1$-norm with respect to the standard basis, the associated projective tensor norm on $V^{\otimes m}$ coincides with the $l^1$-norm with respect to the canonical tensor basis.

From now
on, we assume that a sequence of reasonable tensor algebra norms are given and fixed. We often omit the subscript when the norms are clear from the context.

Let $T((V))$ be the \textit{infinite tensor algebra} consisting
of formal tensor series $\xi=(\xi_{0},\xi_{1},\xi_{2},\cdots)$ with
$\xi_{n}\in V^{\otimes n}$ for each $n$ ($V^{\otimes0}\triangleq\mathbb{F}$). Given $n\geqslant1,$ let $T^{(n)}(V)\triangleq\oplus_{k=0}^{n}V^{\otimes k}$
be the\textit{ truncated tensor algebra} of degree $n$. There are natural notions of exponential and logarithm over these tensor algebras defined by using the standard Taylor expansion formula with respect to the tensor product. For instance, the exponential function over $T((V))$ is given by 
\[
\exp(\xi)\triangleq\sum_{n=0}^{\infty}\frac{1}{n!}\xi^{\otimes n},\ \ \ \xi\in T((V)),
\]while over $T^{(n)}(V)$ it is defined by the same formula but truncated up to degree $n$.

\begin{defn}
\label{def: RoughPath}A \textit{multiplicative functional} of degree
$n$ is a continuous functional 
\[
{\bf X}=(1,X^{1},\cdots,X^{n}):\Delta_{T}\triangleq\{(s,t):0\leqslant s\leqslant t\leqslant T\}\rightarrow T^{(n)}(V)
\]
such that ${\bf X}_{s,u}={\bf X}_{s,t}\otimes{\bf X}_{t,u}$ for all
$s\leqslant t\leqslant u.$ Given a real number $p\geqslant1,$ ${\bf X}$
is said to have \textit{finite total $p$-variation} if
\begin{equation}
\|{\bf X}\|_{p\text{-var}}\triangleq\sum_{k=1}^{n}\sup_{{\cal P}}\left(\sum_{t_{i}\in{\cal P}}\|X_{t_{i-1},t_{i}}^{k}\|^{\frac{p}{k}}\right)^{\frac{k}{p}}<\infty,\label{eq: p-var}
\end{equation}
\noindent where the supremum is taken over all finite partitions of $[0,T]$.
A \textit{rough path} with roughness $p$ (or simply a $p$-rough
path) is a multiplicative functional of degree $\lfloor p\rfloor$
which has finite total $p$-variation, where $\lfloor p\rfloor$ denotes
the largest integer not exceeding $p$.
\end{defn}
\begin{rem}
Due to multiplicativity, a rough path ${\bf X}_{s,t}$ can be equivalently
regarded as an actual path ${\bf X}_{t}\triangleq{\bf X}_{0,t}$ and
vice versa by ${\bf X}_{s,t}\triangleq{\bf X}_{s}^{-1}\otimes{\bf X}_{t}.$
We do not distinguish these two viewpoints.
\end{rem}
The notion of rough paths is mostly useful when a crucial Lie algebraic
property is satisfied. Recall that there is a natural Lie structure
on the tensor algebra given by $[\xi,\eta]\triangleq\xi\otimes\eta-\eta\otimes\xi.$
The space of \textit{homogeneous Lie polynomials} of degree $n$,
denoted as ${\cal L}_{n}(V)$, is the norm completion of the algebraic
space ${\cal L}_{n}^{a}(V)$ defined inductively by ${\cal L}_{1}^{a}(V)\triangleq V$
and ${\cal L}_{n+1}^{a}(V)\triangleq[V,{\cal L}_{n}^{a}(V)].$  
Define the space of \textit{Lie polynomials} of degree $n$ by
\[
{\cal L}^{(n)}(V)\triangleq\oplus_{k=1}^{n}{\cal L}_{k}(V)
\]
and the \textit{free nilpotent group} of degree $n$ by
\[
G^{(n)}(V)\triangleq\exp({\cal L}^{(n)}(V))
\]
respectively. They are both canonically embedded inside $T^{(n)}(V)$.
\begin{defn}
A $p$-rough path is said to be \textit{weakly geometric} if it takes
values in the group $G^{(\lfloor p\rfloor)}(V)$.
\end{defn}
Weakly geometric rough paths cover a wide range of interesting examples,
for instance bounded variation paths ($p=1$), Brownian motion and
continuous semimartingales ($2<p<3$), wide classes of Gaussian processes
and Markov processes etc. This is the appropriate class of paths which the rough path theory of integration and differential equations is based on. 

\subsection{The signature of a rough path}

An important aspect of rough path theory is the characterization
of rough paths in terms of the so-called path signature, which
is a generalized notion of iterated path integrals. Its definition
is based on the following basic property of rough paths proved by
Lyons \cite{Lyons98}. 

\begin{thm}[Lyons' Extension Theorem]\label{thm: LyonsExt}

Let ${\bf X}=({\bf X}_{s,t})_{0\leqslant s\leqslant t\leqslant T}$
be a $p$-rough path. Then there exists a unique extension of ${\bf X}$
to a multiplicative functional $\mathbb{X}:\Delta_T\rightarrow T((V)):$
\[
(s,t)\mapsto\mathbb{X}_{s,t}=(1,X_{s,t}^{1},\cdots,X_{s,t}^{\lfloor p\rfloor},\cdots,X_{s,t}^{n},\cdots),
\]
whose restriction to $T^{(n)}(V)$ has
finite total $p$-variation for all $n\geqslant\lfloor p\rfloor+1.$
Moreover, there exist a universal constant $\beta_{p}$ depending
only on $p$ and a nonnegative function $\omega_\mathbf{X}(s,t)$ related to the $p$-variation of $\mathbf{X}$, such that 
\begin{equation}
\|X_{s,t}^{n}\|\leqslant\frac{\omega_\mathbf{X}(s,t)^{n/p}}{\beta_{p}(n/p)!},\ \ \ {\rm for\ all}\ n\geqslant1\ {\rm and}\ (s,t)\in\Delta_{T},\label{eq: FactEst}
\end{equation}
where the factorial $(n/p)!$ is defined by using the Gamma function.

\end{thm}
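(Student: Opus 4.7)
The strategy is to build the extension $\mathbb{X}$ level by level above $\lfloor p\rfloor$ via multiplicativity along partition refinements, controlling the growth of the new levels by a super-additive control function and Lyons' sharpened \emph{neo-classical inequality}. As a first step I would extract from the finite-$p$-variation assumption (\ref{eq: p-var}) a continuous super-additive function $\omega_{\mathbf{X}}:\Delta_T\to[0,\infty)$ with $\omega_{\mathbf{X}}(s,s)=0$ such that $\|X^k_{s,t}\|\leqslant \omega_{\mathbf{X}}(s,t)^{k/p}/\beta_p$ for $k=1,\ldots,\lfloor p\rfloor$; this control is assembled from the partial sums in (\ref{eq: p-var}) after a harmless rescaling that absorbs universal combinatorial factors into $\beta_p$.

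For each $n\geqslant \lfloor p\rfloor+1$ and each finite partition $\mathcal{P}=\{s=t_0<\cdots<t_r=t\}$, I would define
\[
X^{n,\mathcal{P}}_{s,t} \;\triangleq\; \sum_{\substack{i_1+\cdots+i_r=n\\ 0\leqslant i_j\leqslant \lfloor p\rfloor}} X^{i_1}_{t_0,t_1}\otimes\cdots\otimes X^{i_r}_{t_{r-1},t_r},
\]
which is trivially multiplicative across the points of $\mathcal{P}$ and agrees with the given $X^k$ for $k\leqslant \lfloor p\rfloor$. The inductive task is to show, for each $n\geqslant \lfloor p\rfloor+1$, that $X^{n,\mathcal{P}}_{s,t}$ is Cauchy as the mesh shrinks, that its limit $X^n_{s,t}$ satisfies the factorial estimate (\ref{eq: FactEst}), and that multiplicativity persists. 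The key step is to analyze how $X^{n,\mathcal{P}}$ changes when a single new point $\tau\in(t_{j-1},t_j)$ is inserted: the multiplicativity already established strictly below level $n$ forces cancellation of lower-order contributions, leaving a residue bounded by $\sum_{a=1}^{n-1}\|X^a_{t_{j-1},\tau}\|\,\|X^{n-a}_{\tau,t_j}\|$. Applying the inductive factorial bound to each factor and invoking the neo-classical inequality
\[
\sum_{a=1}^{n-1}\frac{x^{a/p}\,y^{(n-a)/p}}{(a/p)!\,\bigl((n-a)/p\bigr)!} \;\leqslant\; \frac{p^2\,(x+y)^{n/p}}{(n/p)!}
\]
together with super-additivity of $\omega_{\mathbf{X}}$ controls this residue by $C_p\,\omega_{\mathbf{X}}(t_{j-1},t_j)^{n/p}/(n/p)!$. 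The standard Young-type refinement argument---iteratively removing from a fine partition the interior point with smallest $\omega_{\mathbf{X}}$-contribution, back down to $\{s,t\}$---then produces a convergent series, yielding both Cauchy convergence of $X^{n,\mathcal{P}}_{s,t}$ and the factorial bound (\ref{eq: FactEst}) at level $n$ with $\beta_p$ absorbing the recursive factor $p^2$.

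Multiplicativity of $\mathbb{X}$ at the new level passes to the limit by restricting to partitions that contain any designated intermediate point. For uniqueness I would argue by induction on $n$: if a second extension $\tilde{\mathbb{X}}$ has finite $p$-variation at every level and agrees with $\mathbb{X}$ strictly below $n$, then the difference $X^n_{s,t}-\tilde X^n_{s,t}$ is an additive function of the interval with finite $p/n$-variation, and since $p/n<1$ a standard Young-type argument forces it to vanish.

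The principal obstacle is the sharp factorial growth: a naive binomial bound on the convolution sum above would produce an extra factor growing in $n$ that destroys the factorial decay, so the neo-classical inequality is essential. Beyond that, the identification of the explicit universal constant $\beta_p$ and the combinatorial verification that the recursion closes at each level require care, but the algebraic and topological aspects of the construction---partition refinement, tensor-product extraction, and passage to the limit---are conceptually routine once this single sharp estimate is in hand.
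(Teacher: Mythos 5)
The paper does not prove this theorem at all: it is quoted as background with a citation to Lyons' original article \cite{Lyons98}, so the only meaningful comparison is with that source. Your plan is, in substance, Lyons' own proof: extract a superadditive control $\omega_{\mathbf{X}}$ from the finite total $p$-variation (rescaled so the first $\lfloor p\rfloor$ levels satisfy the factorial bound), build each new level as a limit of partition products, control the effect of inserting or removing one partition point by the cross terms $\sum_{a}\|X^{a}\|\,\|X^{n-a}\|$, close the estimate with the neo-classical inequality (your constant $p^{2}$ is Lyons' original one; the Hara--Hino/Friz--Riedel refinement quoted in the paper as Lemma \ref{lem: neo} gives $p$), run the maximal-point-removal refinement argument using superadditivity, and prove uniqueness by observing that the difference of two extensions at the first disagreeing level is additive with finite $q$-variation for $q=p/n<1$, hence zero. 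All of these ingredients are correct and correctly assembled.

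One internal inconsistency should be fixed. You define the approximation $X^{n,\mathcal{P}}_{s,t}$ by restricting the block degrees to $i_{j}\leqslant\lfloor p\rfloor$, but your key cancellation step (``multiplicativity already established strictly below level $n$ forces cancellation, leaving a residue bounded by $\sum_{a=1}^{n-1}\|X^{a}_{t_{j-1},\tau}\|\,\|X^{n-a}_{\tau,t_{j}}\|$'') presumes that each block is the full degree-$(n-1)$ multiplicative functional already constructed by induction. With your truncated definition the identity $\tilde{X}_{t_{j-1},\tau}\otimes\tilde{X}_{\tau,t_{j}}=\tilde{X}_{t_{j-1},t_{j}}$ fails at every level between $\lfloor p\rfloor+1$ and $n-1$, and these discrepancies propagate through the tensor products with the neighbouring blocks, so the one-point insertion residue is not the single cross-term sum you wrote. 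The remedy is simply to follow Lyons: at stage $n$ take the blocks to be $(1,X^{1}_{u,v},\dots,X^{n-1}_{u,v},0)\in T^{(n)}(V)$ built from the extension already obtained up to level $n-1$; then the levels below $n$ of the partition product are partition-independent, the insertion/removal residue is exactly the top-level cross terms, and the rest of your argument (neo-classical inequality, superadditivity, convergence along refinements, passage of multiplicativity to the limit, and the uniqueness step) goes through and reproduces the bound (\ref{eq: FactEst}) with a $\beta_{p}$ depending only on $p$.
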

\begin{defn}
The tensor series $\mathbb{X}_{0,T}\in T((V))$ is called the \textit{signature}
of ${\bf X}.$ It is usually denoted as $S({\bf X}).$
\end{defn}
\begin{example}
If $(X_{t})_{0\leqslant t\leqslant T}$ is a bounded variation path,
then its signature is precisely the sequence of iterated path integrals
\[
\left(1,X_{T}-X_{0},\int_{0<s<t<T}dX_{s}\otimes dX_{t},\cdots\right)\in T((V))
\]defined in the sense of Lebesgue-Stieltjes. 
In this case, the factorial estimate (\ref{eq: FactEst}) reduces to the elementary
estimate (\ref{eq: BVFactEst}). If $(B_{t})_{0\leqslant t\leqslant T}$
is a multidimensional Brownian motion, then its (pathwise) signature
coincides with the sequence of iterated stochastic integrals defined
in the sense of Stratonovich.
\end{example}
It is a fundamental result (cf. \cite{HL10} and \cite{BGLY16}) that every weakly geometric rough path over a real Banach space is uniquely determined by
its signature up to tree-like pieces. In addition, it is a consequence
of the weakly geometric property that any given component of signature
can be embedded into arbitrary higher degree components by raising tensor
powers (cf. \cite{CLN18}). Therefore, the \textit{tail} of signature (in the asymptotics
as degree tends to infinity) encodes essentially all information about
the underlying path.

In view of the factorial estimate (\ref{eq: FactEst}), a natural
quantity one can construct from the tail of signature is the normalized
component $((n/p)!\|X_{0,T}^{n}\|)^{p/n}$ as $n\rightarrow\infty.$
Since signature components can vanish infinitely often, we are led
to considering the functional 
\begin{equation}
L_{p}({\bf X})\triangleq\limsup_{n\rightarrow\infty}\left(\left(\frac{n}{p}\right)!\left\Vert X^{n}_{0,T}\right\Vert \right)^{\frac{p}{n}}.\label{eq: DefTail}
\end{equation}

Our goal is to investigate at a quantitative level how the quantity $L_p(\bf X)$ is related to certain notion of local $p$-variation of $\mathbf{X}$
for the simplest type of rough paths known as pure rough paths. These are
straight forward analogues of line segments in the rough path context,
and they form the very first non-trivial class of rough paths for
the underlying problem.

\subsection{\label{subsec: Pure}Pure rough paths and formulation of main result}

Now we give the precise definition of the aforementioned class of rough paths that we will be working wtih. Let $m\geqslant1$ be a given integer.
\begin{defn}
A \textit{pure $m$-rough path} is a weakly geometric rough path
of the form 
\[
{\bf X}_{t}=\exp(tl)\in G^{(m)}(V),\ \ \ 0\leqslant t\leqslant1,
\]
where $l\in{\cal L}^{(m)}(V)$ is a Lie polynomial of degree $m$.
\end{defn}
\begin{example}
When $m=1,$ a pure $1$-rough path is simply a line segment in $V.$
\end{example}
We list a few basic properties of pure rough paths that are relevant
to us and leave the proofs in the appendix so as not to distract the
reader from the main picture.
\begin{prop}
\label{prop: LocalPVar}A pure $m$-rough path ${\bf X}_{t}=\exp(tl)$
is a rough path with roughness $m$ in the sense of Definition \ref{def: RoughPath}.
In addition, the local $m$-variation of ${\bf X}$ coincides with
the norm of the highest degree component of $l$, in the sense that
\[
\lim_{\delta\rightarrow0}\sum_{k=1}^{m}\left(\inf_{{\rm mesh}({\cal P})\leqslant\delta}\sum_{t_{i}\in{\cal P}}\|X_{t_{i-1},t_{i}}^{k}\|^{\frac{m}{k}}\right)^{\frac{k}{m}}=\|\pi_{m}(l)\|,
\]
where $\pi_{m}:T^{(m)}(V)\rightarrow V^{\otimes m}$ is the canonical
projection.
\end{prop}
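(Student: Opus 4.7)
The proof divides into two parts: verifying the rough path property and establishing the local variation formula. For the first, my plan is to exploit that $sl$ and $tl$ are both scalar multiples of the same $l$, hence commute in $T^{(m)}(V)$, so multiplicativity forces
\[
\mathbf{X}_{s,t} = \exp(-sl) \otimes \exp(tl) = \exp\bigl((t-s)\,l\bigr),
\]
which lies in $G^{(m)}(V)$ by construction. Writing $l = l_1 + \cdots + l_m$ with $l_k \in \mathcal{L}_k(V)$, the observation that $l^{\otimes j}$ has minimum tensor degree $j$ yields the finite expansion
\[
X^k_{s,t} = \sum_{j=1}^{k} \frac{(t-s)^j}{j!}\,\pi_k\bigl(l^{\otimes j}\bigr),
\]
from which $\|X^k_{s,t}\| \leqslant C_k (t-s)$ uniformly on $\Delta_1$, with $C_k$ depending only on $l$ and the chosen tensor norms. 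Since $m/k \geqslant 1$ and $\sum_i u_i = 1$ for $u_i = t_i - t_{i-1}$, the total $m$-variation is controlled by $\sum_k C_k^{m/k}<\infty$, establishing roughness $m$.

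For the local $m$-variation identity I analyze the contribution of each degree $k$ separately. When $k < m$ the exponent $m/k$ exceeds $1$, so testing on the uniform partition of mesh $\delta$ (with roughly $1/\delta$ intervals) gives
\[
\inf_{\mathrm{mesh}(\mathcal{P})\leqslant\delta} \sum_i \|X^k_{t_{i-1},t_i}\|^{m/k} \;\leqslant\; C_k^{m/k}\,\lceil 1/\delta\rceil\,\delta^{m/k} \;=\; O\bigl(\delta^{m/k-1}\bigr) \;\xrightarrow{\delta\to 0}\; 0,
\]
whence $(\inf)^{k/m}\to 0$. For the decisive case $k = m$, I refine the expansion to
\[
X^m_{s,t} = (t-s)\,l_m + R_{s,t}, \qquad \|R_{s,t}\| \leqslant C(t-s)^2,
\]
with $C$ depending only on $l$. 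Applying forward and reverse triangle inequalities, then summing over any partition $\mathcal{P}$ with $\mathrm{mesh}(\mathcal{P})\leqslant\delta$ and using $\sum_i u_i = 1$ together with $\sum_i u_i^2 \leqslant \delta$, gives
\[
\left|\sum_i \|X^m_{t_{i-1},t_i}\| - \|l_m\|\right| \;\leqslant\; C\delta
\]
uniformly in $\mathcal{P}$. Consequently the infimum for $k=m$ converges to $\|l_m\| = \|\pi_m(l)\|$ as $\delta\to 0$, and combined with the vanishing contributions for $k < m$ this yields the identity.

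No deep obstacle is expected: once the finite Taylor-type expansion of $\pi_k(\exp(ul))$ is in place, both estimates reduce to elementary manipulation of power sums. The only point demanding a little care is checking that the remainder bound $\|R_{s,t}\|\leqslant C(t-s)^2$ holds with a constant uniform over $(s,t)\in\Delta_1$, so that the $O(\delta)$ error in the $k=m$ estimate is genuinely uniform over all admissible partitions; this however is immediate from the finiteness of the expansion.
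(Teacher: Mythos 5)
Your proposal is correct and follows essentially the same route as the paper: expand $X^k_{s,t}=\pi_k(\exp((t-s)l))$ as a finite polynomial in $(t-s)$ with lowest-order coefficient $\pi_k(l^{\otimes k}/k!)$ (equal to $l_m$ when $k=m$), deduce the uniform bound $\|X^k_{s,t}\|\leqslant C_k(t-s)$ for the $m$-variation, kill the $k<m$ terms by the exponent $m/k>1$, and isolate the linear term at $k=m$. Your explicit remainder bound $\|R_{s,t}\|\leqslant C(t-s)^2$ and the triangle-inequality summation merely spell out the step the paper labels ``elementary.''
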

\begin{rem}
We do not explicitly define local $p$-variation for general $p$-rough path because we are not aware of the most natural way of doing so. However, in the context of pure rough paths, whichever natural way of definition gives the same quantity $\|\pi_{m}(l)\|$
making it a canonical intrinsic property of the pure rough path. Indeed, the conclusion of Proposition \ref{prop: LocalPVar} remains unchanged if one replaces the "infimum" with a "supremum", or taking any a priori sequence of partitions whose mesh size tends to zero, or replacing the outer sum by taking maximum over degrees $1\leqslant k\leqslant m$. 
\end{rem}
\begin{prop}
\label{prop: SigPure}Let ${\bf X}_{t}=\exp(tl)$ be a pure $m$-rough
path. Then its signature is equal to $\exp(l)$ where
the exponential is now taken over the infinite tensor algebra $T((V))$.
In addition, up to tree-like equivalence this is the only weakly geometric
rough path whose signature is $\exp(l)$.
\end{prop}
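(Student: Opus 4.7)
The plan is to explicitly construct a candidate for the Lyons extension of $\mathbf{X}$ and invoke the uniqueness statement of Theorem \ref{thm: LyonsExt}. Since the Lie polynomial $l \in \mathcal{L}^{(m)}(V)$ has only finitely many nonzero homogeneous components (in degrees $1$ through $m$), the tensor exponential $\exp(tl) = \sum_{k=0}^\infty \frac{t^k}{k!} l^{\otimes k}$ makes sense as a genuine element of $T((V))$: each degree-$n$ component is a finite sum because only $k \leq n$ can contribute. I would therefore define
\[
\tilde{\mathbf{X}}_{s,t} \triangleq \exp\bigl((t-s)l\bigr) \in T((V)), \qquad (s,t) \in \Delta_1,
\]
as my candidate for the Lyons extension of $\mathbf{X}$.

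The next step is to verify the two defining features required by Theorem \ref{thm: LyonsExt}. First, multiplicativity: since $l$ tensor-commutes with itself, one has $\exp(al)\otimes\exp(bl) = \exp((a+b)l)$ in $T((V))$, which immediately yields $\tilde{\mathbf{X}}_{s,t} \otimes \tilde{\mathbf{X}}_{t,u} = \tilde{\mathbf{X}}_{s,u}$. Second, the projection of $\tilde{\mathbf{X}}$ to $T^{(m)}(V)$ agrees with $\mathbf{X}$, because truncating the full tensor exponential coincides with the truncated exponential used in defining $G^{(m)}(V)$. It remains to check that each component $\tilde{X}^n$ has finite $m$-variation for $n \geq m+1$. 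Expanding the $n$-th tensor component,
\[
\tilde{X}^n_{s,t} = \sum_{k=\lceil n/m\rceil}^{n} \frac{(t-s)^k}{k!} \sum_{\substack{j_1+\cdots+j_k=n \\ 1\leq j_i\leq m}} l_{j_1}\otimes\cdots\otimes l_{j_k},
\]
so $\|\tilde{X}^n_{s,t}\| \leq C_n (t-s)^{\lceil n/m\rceil}$. The exponent $\lceil n/m\rceil \geq n/m$ gives $\|\tilde{X}^n_{s,t}\|^{m/n} \leq C'_n (t-s)^{\lceil n/m\rceil \cdot m/n}$ with $\lceil n/m\rceil \cdot m/n \geq 1$, which is more than enough for finite $m$-variation (in fact, finite $1$-variation) of each level. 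By the uniqueness clause of Lyons' Extension Theorem, $\tilde{\mathbf{X}}$ coincides with the Lyons extension of $\mathbf{X}$, and evaluating at $(0,1)$ gives $S(\mathbf{X}) = \exp(l)$.

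For the second assertion, I would appeal to the uniqueness theorem of Hambly--Lyons \cite{HL10} (extended to Banach spaces in \cite{BGLY16}): any two weakly geometric rough paths sharing the same signature are tree-like equivalent. Combined with the first part, this is exactly the claim that, up to tree-like pieces, $\mathbf{X}_t = \exp(tl)$ is the only weakly geometric rough path with signature $\exp(l)$.

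The only genuinely technical point is the $p$-variation bookkeeping in the penultimate step, but this is quite elementary once the multinomial expansion of $\exp((t-s)l)$ is written down and one observes that the minimum number of factors needed to build up degree $n$ from pieces of degrees $\leq m$ is $\lceil n/m\rceil$; everything else is formal consequence of the commutation of $l$ with itself and a straight application of Lyons' extension and Hambly--Lyons uniqueness.
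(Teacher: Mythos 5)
Your proposal is correct and follows essentially the same route as the paper: both verify that $\exp((t-s)l)$ is a multiplicative functional extending $\mathbf{X}$ with finite total $m$-variation (the paper states this level-by-level in $T^{(n)}(V)$ and leaves the variation estimate as "not hard to see," which you supply via the $\lceil n/m\rceil$ exponent), then invoke uniqueness in Lyons' Extension Theorem for the first claim and the uniqueness of signature from \cite{BGLY16} for the second.
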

In the case of pure rough paths, there are partial clues suggesting that the relationship between
the signature tail asymptotics and the local $m$-variation is as
simple and neat as stated in the following conjectural formula. This can be
viewed as an extension of the formula (\ref{eq: BVLengthConj}) in
the bounded variation case, and it is also consistent with what we see in
the Brownian motion case (cf. \cite{BG17}).
\begin{conjecture}
\label{conj: LengConjPure}For every pure $m$-rough path ${\bf X}_{t}=\exp(tl)\in G^{(m)}(V),$
the tail asymptotics quantity $L_{m}({\bf X})$ of signature equals the local $m$-variation
of ${\bf X}$. In view of Proposition \ref{prop: LocalPVar}, that
is $L_{m}({\bf X})=\|\pi_{m}(l)\|.$
\end{conjecture}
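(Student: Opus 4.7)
The plan is to compare the two sides of $L_m(\mathbf{X})=\|\pi_m(l)\|$ via the explicit signature expansion
$$\pi_n(\exp l)=\sum_{k=1}^{n}\frac{1}{k!}\sum_{\substack{i_1+\cdots+i_k=n\\ 1\leqslant i_j\leqslant m}}l_{i_1}\otimes\cdots\otimes l_{i_k},$$
where $l=l_1+\cdots+l_m$ is the decomposition into homogeneous Lie components and I abuse notation by writing $\pi_n$ for the canonical projection $T((V))\to V^{\otimes n}$.

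The upper bound $L_m(\mathbf{X})\leqslant\|l_m\|$ follows by applying submultiplicativity (property (ii) of reasonable tensor norms) to each summand, which dominates $\|\pi_n(\exp l)\|$ by the coefficient of $z^n$ in the entire function $\exp(A(z))$ with $A(z)=\|l_1\|z+\cdots+\|l_m\|z^m$. A standard saddle-point analysis (Hayman admissibility, with saddle point $z_n\sim(n/(m\|l_m\|))^{1/m}$) yields $[z^{mk}]\exp(A(z))\sim\|l_m\|^k/k!$ up to a subexponential factor, and Stirling converts this into the desired bound. The case $n\not\equiv 0\pmod m$ is handled by interpolation through nearby multiples of $m$ using the gamma function.

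For the matching lower bound, the guiding observation is that when $n=mk$ the unique composition $(m,m,\ldots,m)$ produces the coherent term $l_m^{\otimes k}/k!$ whose projective norm equals exactly $\|l_m\|^k/k!$. The task is to rule out the possibility that the remaining mixed-block contributions systematically cancel this coherent contribution. My approach is to dualise via the characterisation (\ref{eq: DualProj}) and construct, for large $k$, a multilinear form $B_k\in(V^*)^{\otimes mk}$ of norm one such that $|B_k(\pi_{mk}(\exp l))|\geqslant \|l_m\|^k/k!$ up to a subexponentially decaying correction. The natural candidate $B^{\otimes k}$, where $B$ attains the projective norm of $l_m$, is generally not enough; one should rather choose $B$ to factor through a finite-dimensional development $\rho:T(V)\to\mathrm{End}(W)$ whose image lies in a complex semisimple Lie algebra $\mathfrak{g}$, arranged so that $\rho(l_m)$ sits in a Cartan subalgebra $\mathfrak{h}\subset\mathfrak{g}$ with a distinguished real weight $\lambda=\|l_m\|$. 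Weight-space projections inside $\mathfrak{g}$ then provide a filtration under which $\rho(l_m^{\otimes k})$ survives while the lower-order perturbations coming from $\rho(l_1),\ldots,\rho(l_{m-1})$ are killed or subexponentially suppressed.

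The hard part, and the reason this is only a conjecture, is the construction of such an optimal semisimple development $\rho$ realising the projective norm $\|l_m\|$ as a genuine spectral quantity of $\rho(l_m)$ for an \emph{arbitrary} homogeneous Lie element $l_m\in\mathcal{L}_m(\mathbb{R}^d)$. For small $(m,d)$ one exhibits $\rho$ explicitly inside low-rank simple Lie algebras such as $\mathfrak{sl}_2(\mathbb{C})$ and $\mathfrak{sl}_3(\mathbb{C})$, which is how the paper reaches the special cases $(m,d)=(2,2),(3,2)$ with constant $c(m,d)=1$. In general, this appears to require an extremal-word theorem for the free Lie algebra: every homogeneous Lie polynomial of degree $m$ in $d$ generators should have its projective norm realised as the largest real spectral value of its image under some semisimple development, possibly of unbounded dimension. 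Establishing this extremal characterisation, rather than the surrounding analytic machinery of contour or weight-space extraction, is where I expect the main obstacle to lie.
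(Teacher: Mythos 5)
The statement you are attempting is Conjecture \ref{conj: LengConjPure}, which the paper itself leaves open: the paper proves only the two-sided estimate $c(m,d)\,\|\pi_m(l)\|\leqslant L_m(\mathbf{X})\leqslant\|\pi_m(l)\|$ of Theorem \ref{thm:MainThm}, with $c(m,d)=1$ established only for $d=2$, $m=2,3$ (and some cases in $m=4,5$), plus Theorem \ref{thm: FreeLie} for special two-component $l$ under the Hilbert--Schmidt norm. Your proposal is honest about this --- you explicitly defer the decisive step --- but as a result it is a strategy outline, not a proof, and the strategy it outlines is essentially the paper's own: a combinatorial domination of $\|\pi_n(\exp l)\|$ by the coefficients of $\exp(\|l_1\|z+\cdots+\|l_m\|z^m)$ for the upper bound (the paper extracts the asymptotics via the Friz--Riedel neoclassical inequality and Lemma \ref{lem: upper} rather than Hayman saddle-point analysis, but either route works), and developments into complex semisimple Lie algebras sending $\mathcal{L}_m(V)$ into a Cartan subalgebra for the lower bound. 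The gap you name at the end --- constructing, for an arbitrary $l_m\in\mathcal{L}_m(\mathbb{R}^d)$, a development of operator norm one under which $\|l_m\|$ is a weight value of $\Phi(l_m)$ --- is precisely the obstruction the paper identifies (see the remark following Theorem \ref{thm: ExpC}), and it remains unresolved. So the conjecture is not proved by your argument.

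One substantive caution about your lower-bound framing. You propose to dualise degree by degree: for each large $k$, build a norm-one $mk$-linear form $B_k$ with $|B_k(\pi_{mk}(\exp l))|\gtrsim\|l_m\|^k/k!$, with the mixed-block contributions ``killed or subexponentially suppressed'' by weight-space projections. At a fixed degree $mk$ this reintroduces exactly the cancellation problem among the $o(k)$ comparable mixed terms that Section \ref{sec: SpecExam} warns about, and it is not clear any weight projection kills them: $\Phi(l_j)$ for $j<m$ does not lie in the Cartan subalgebra and mixes weight spaces. The paper's actual mechanism avoids estimating any single degree: it develops the dilated path $\delta_\lambda(\mathbf{X})$, writes $\Gamma_1^\lambda=\exp(\lambda^m T(\lambda))$ with $T(\lambda)=\Phi(l_m)+\lambda^{-1}\Phi(l_{m-1})+\cdots$, uses finite-dimensional eigenvalue perturbation (Kato) to see that $T(\lambda)$ has an eigenvalue converging to one of $\Phi(l_m)$ as $\lambda\to\infty$, and then converts the resulting exponential growth of $\|\Gamma_1^\lambda\|$ into a lower bound on $L_m(\mathbf{X})$ via Proposition \ref{prop: FirstLBdd}. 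If you want to salvage your per-degree dual approach you would need to supply the suppression argument you only assert; otherwise you should adopt the perturbation-plus-development route. Finally, even granting that route, obtaining a \emph{uniform} constant over all coefficient vectors of $l_m$ requires something like the paper's consistency lemma for symmetric polynomial systems (Lemma \ref{lem: Solvability}), which your outline does not anticipate.
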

As a first major step towards understanding this problem, our main result can be summarized as a uniform upper and lower estimate
of $L_{m}({\bf X})$ in terms of $\|\pi_{m}(l)\|$ for pure $m$-rough
paths.
\begin{thm}
\label{thm:MainThm} Let $V$ be a finite dimensional Banach
space, and let every tensor product $V^{\otimes n}$ be equipped with the associated projective tensor norm. Then for each
$m\geqslant1$, there exists a constant $c(m,d)\in(0,1]$ depending
only on $m$ and $d\triangleq\dim V,$ such that 
\[
c(m,d)\|\pi_{m}(l)\|\leqslant L_{m}({\bf X})\leqslant\|\pi_{m}(l)\|
\]
for all pure $m$-rough paths ${\bf X}_{t}=\exp(tl)\in G^{(m)}(V)$.
The factor $c(m,d)$ admits an explicit lower estimate
\[
c(m,d)\geqslant \Lambda_{d}^{-m}\cdot2^{-(\nu_{m,d}!)^{\gamma\nu_{m,d}}},
\]
where $\Lambda_{d}$ is a constant depending only on $d$, $\nu_{m,d}\triangleq\dim{\cal L}_{m}(V)$,
and $\gamma>1$ is a universal constant.

In addition, if $V=\mathbb{R}^2$ is equipped with the $l^1$-norm with respect to the canonical basis, then for degrees $m=2,3,$ we further have $c(m,d)=1$, showing that
Conjecture \ref{conj: LengConjPure} holds for these cases. The same conclusion holds for some cases in degrees $m=4,5$ as well.
\end{thm}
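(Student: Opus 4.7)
\emph{Upper bound.} I would first expand the signature as $S(\mathbf{X})=\exp(l)$ in $T((V))$ and separate $l=l_1+\cdots+l_m$ into its homogeneous Lie components, obtaining the exact formula
\[
X^n_{0,1}=\sum_{k=\lceil n/m\rceil}^{n}\frac{1}{k!}\sum_{\substack{(i_1,\ldots,i_k)\in\{1,\ldots,m\}^k\\ i_1+\cdots+i_k=n}}l_{i_1}\otimes\cdots\otimes l_{i_k}.
\]
Submultiplicativity of the projective tensor norm then bounds $\|X^n_{0,1}\|$ by the $n$-th Taylor coefficient of the scalar entire function $A(z)\triangleq\exp(\|l_1\|z+\cdots+\|l_m\|z^m)$. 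A Cauchy estimate $[z^n]A(z)\le A(R)/R^n$, optimised at $R=(n/(m\|l_m\|))^{1/m}$ and combined with Stirling's formula, delivers $((n/m)!\,\|X^n_{0,1}\|)^{m/n}\le\|l_m\|(1+o(1))$, which is precisely the upper bound $L_m(\mathbf{X})\le\|\pi_m(l)\|$.

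\emph{Lower bound via semisimple developments.} The guiding observation is that every linear map $\phi:V\to\mathrm{End}(W)$ of operator norm at most one extends to an algebra homomorphism $\Phi:T((V))\to\mathrm{End}(W)$ with $\|\Phi(\xi)\|_{\mathrm{op}}\le\|\xi\|_{\mathrm{proj}}$ for $\xi\in V^{\otimes n}$, so it suffices to lower-bound $\|\Phi(X^n_{0,1})\|$ for a well-chosen $\Phi$. Exploiting the dilation $\delta_\lambda|_{V^{\otimes n}}=\lambda^n\,\mathrm{id}$, I would study the $\mathrm{End}(W)$-valued entire function
\[
f(\lambda)\triangleq\sum_{n\ge 0}\lambda^n\Phi(X^n_{0,1})=\exp\Bigl(\sum_{k=1}^{m}\lambda^k\phi(l_k)\Bigr),
\]
whose behaviour as $\lambda\to\infty$ is dominated by the top-degree term $\lambda^m\phi(l_m)$. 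A saddle-point/Cauchy analysis, performed on a top (generalised) eigenspace of $\phi(l_m)$ with the contributions from $\phi(l_1),\ldots,\phi(l_{m-1})$ absorbed into sub-exponential corrections, yields along the subsequence $n=mk$ the estimate
\[
\limsup_{k\to\infty}\bigl(k!\,\|\Phi(X^{mk}_{0,1})\|\bigr)^{1/k}\ge\rho\bigl(\phi(l_m)\bigr),
\]
and hence $L_m(\mathbf{X})\ge\rho(\phi(l_m))$ for every admissible $\phi$. The genuinely new task is then to construct, for each $l_m\in\mathcal{L}_m(V)$, a development $\phi$ with $\rho(\phi(l_m))$ a definite fraction of $\|\pi_m(l)\|$. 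I would factor $\phi$ through a linear embedding $\iota:V\to\mathfrak g$ into a complex semisimple Lie algebra, conjugate the extended image $\tilde\iota(l_m)\in\mathfrak g$ via the adjoint group into a Cartan subalgebra $\mathfrak h\subset\mathfrak g$, and then test against a finite-dimensional representation of $\mathfrak g$ whose weight lattice contains an extremal weight paired comparably with $\tilde\iota(l_m)$. The distortion of the initial embedding produces the $\Lambda_d^{-m}$ factor, while the combinatorial factor $2^{-(\nu_{m,d}!)^{\gamma\nu_{m,d}}}$ should arise from a pigeonhole/covering argument over the adjoint orbit of $\tilde\iota(l_m)$ ruling out simultaneous shortness against all extremal weights of the chosen representation.

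\emph{Sharp low-dimensional cases and main obstacle.} When $d=2$ and $m\in\{2,3\}$ the free Lie spaces $\mathcal{L}_m(\mathbb{R}^2)$ are of small dimension (one and two respectively), so I would develop $e_1,e_2$ directly into suitably scaled Chevalley generators of $\mathfrak{sl}_2(\mathbb{C})$, sending $[e_1,e_2]$ to the standard Cartan element $H$. A hand computation of $\rho(\phi(l_m))$ on the defining $2$-dimensional representation then matches $\|\pi_m(l)\|$ exactly under the $\ell^1$ projective normalisation, yielding $c(m,d)=1$; refined $\mathfrak{sl}_2$- or $\mathfrak{sl}_3$-based constructions take care of the isolated sharp examples claimed for $m=4,5$. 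The principal obstacle of the whole programme is the quantitative spectral lemma isolated above: given an arbitrary unit-norm $l_m\in\mathcal L_m(V)$, produce a semisimple development whose Cartan component attains a controlled positive fraction of $\|l_m\|$ as an extremal weight of some representation. Making this estimate uniform in $l_m$, while simultaneously controlling the norm of the embedding $\iota$, is what both forces the introduction of the semisimple-Lie-algebra machinery and produces the explicit (albeit highly pessimistic) form of $c(m,d)$ displayed in the theorem.
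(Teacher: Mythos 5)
Your upper bound is correct and in fact takes a cleaner route than the paper: bounding $\|X^n\|$ by the $n$-th Taylor coefficient of $\exp(\|l_1\|z+\cdots+\|l_m\|z^m)$ and applying a Cauchy estimate at $R=(n/(m\|l_m\|))^{1/m}$ does deliver $L_m(\mathbf{X})\leqslant\|l_m\|$ after Stirling, whereas the paper goes through the multivariate neoclassical inequality of Friz--Riedel plus a separate analytic lemma. Your general philosophy for the lower bound (develop into a complex semisimple Lie algebra, reduce to a spectral quantity of $\phi(l_m)$) is also the paper's. Two caveats on the intermediate step: the paper derives $L_m(\mathbf{X})\geqslant\sup\{\mathrm{Re}\,\mu:\mu\in\sigma(\Phi(l_m))\}/\|\Phi\|^m$ not by extracting coefficients from $f(\lambda)$ by a saddle-point/Cauchy argument (which would require lower-bounding a contour integral and ruling out cancellation), but by the reverse, Tauberian-style inequality: an upper bound $\|\Gamma^\lambda\|\leqslant C\lambda^m L_N e^{(\lambda\|\Phi\|)^m L_N}+q_N(\lambda)$ forces $L_N$ to dominate the exponential growth rate of $\|\Gamma^\lambda\|$, which is then lower-bounded via Kato's perturbation theory applied to $T(\lambda)=\Phi(l_m)+\lambda^{-1}\Phi(l_{m-1})+\cdots$. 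Your "absorb $\phi(l_1),\dots,\phi(l_{m-1})$ into sub-exponential corrections" is exactly the delicate point this perturbation argument handles.

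The genuine gap is in the construction of the development itself. First, you propose to conjugate $\tilde\iota(l_m)$ into a Cartan subalgebra via the adjoint group; this is impossible unless $\tilde\iota(l_m)$ is a semisimple element, and for a generic embedding the image of a Lie polynomial can be nilpotent (in which case every eigenvalue vanishes and the bound is vacuous). The paper instead \emph{designs} the embedding so that all of $\mathcal{L}_m(V)$ lands in $\mathfrak{h}$ by construction: $F(V)$ is placed in $E=\mathfrak{g}^{\alpha_1}\oplus\cdots\oplus\mathfrak{g}^{\alpha_{m-1}}\oplus\mathfrak{g}^{-(\alpha_1+\cdots+\alpha_{m-1})}$, and the root-pattern Lemma \ref{lem: IntoCartan} forces the $(m-1)$-fold bracket into $\mathfrak{g}^0=\mathfrak{h}$. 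Second, and more seriously, the uniformity over $l_m$ — which you correctly identify as the "principal obstacle" but leave to a vague pigeonhole/covering argument over an adjoint orbit — is precisely where the paper's real work lies: it dualizes the projective norm by an $m$-linear functional $B$ with $\|B\|\leqslant1$, imposes the system $\mu(F(h_i))=B(h_i)$ on the entries of the embedding, proves the consistency Lemma \ref{lem: Solvability} for systems of linearly independent homogeneous polynomials spread over $k$ blocks of variables (this is why $\mathfrak{g}=\mathfrak{sl}(km,\mathbb{C})$ with block-diagonal $F$ and the $k$-th exterior-power representation are needed), and then invokes Vorob'ev's a priori bound on solutions of integer-coefficient polynomial systems to get the explicit $c(m,d)$. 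Without these ingredients there is no mechanism making the extremal weight comparable to $\|l_m\|_{\mathrm{proj}}$ with a constant independent of the coefficients of $l_m$, so the lower bound as proposed does not close.
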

\begin{rem}
When $m=1$, Conjecture \ref{conj: LengConjPure} boils down to the bounded variation formula (\ref{eq: BVLengthConj}) which holds trivially in this case since the underlying path is now a classical line segment.
\end{rem}

Although the main problem and results are motivated from rough path theory, we also give a parallel algebraic formulation which might raise potential interests in other fields.\\
\\
\textbf{Conjecture \ref{conj: LengConjPure}'.} \textit{Let $(V,\|\cdot\|)$ be a finite dimensional Banach space, and let the tensor products be equipped with some given reasonable tensor algebra norms. Then for any Lie polynomial $l$, the following asymptotics formula holds true:
\[
\limsup_{n\rightarrow\infty}\left(\left(\frac{n}{m}\right)!\|\pi_{n}(\exp(l))\|\right)^{\frac{m}{n}}=\|\pi_{m}(l)\|,
\]where $m$ is the degree of $l$.}\\
\\
\textbf{Theorem \ref{thm:MainThm}'.} \textit{Let $(V,\|\cdot\|)$ be a $d$ dimensional Banach space, and let the tensor products be equipped with the associated projective tensor norm. Then for each $m\geqslant1$, there exists a constant $c(m,d)\in(0,1]$ depending only on $m$ and $d$, such that for any Lie polynomial $l$ of degree $m$, the following estimate holds true:}
\[
c(m,d)\|\pi_{m}(l)\|\leqslant\limsup_{n\rightarrow\infty}\left(\left(\frac{n}{m}\right)!\|\pi_{n}(\exp(l))\|_{V^{\otimes n}}\right)^{\frac{m}{n}}\leqslant\|\pi_{m}(l)\|.
\]The factor $c(m,d)$ admits an explicit lower estimate and for some lower degree cases $c(m,d)=1$ giving the sharp result, precisely as stated in Theorem \ref{thm:MainThm}.

\section{\label{sec: SpecExam}Some special examples and heuristic calculations}

Before developing the proof of Theorem \ref{thm:MainThm}, we examine a few special examples
in order to get a better sense of the problem.

In the first place, the problem is trivial when (and only when) $\mathbf{X}_{t}$
is defined by a homogeneous polynomial. More precisely, if $\mathbf{X}_{t}=\exp(tl)$
with $l\in V^{\otimes m}$, it is immediate that 
\[
X^{n}=\pi_{n}(\exp(l))=\sum_{k=0}^{\infty}\frac{1}{k!}\pi_{n}(l^{\otimes k})=\begin{cases}
\frac{1}{(n/m)!}l^{\otimes(n/m)}, & m\mid n,\\
0, & m\nmid n.
\end{cases}
\]
Therefore, 
\begin{equation}\label{eq: HomCase}
L_{m}({\bf X})=\lim_{k\rightarrow\infty}\left(k!\|X^{km}\|\right)^{\frac{1}{k}}=\lim_{k\rightarrow\infty}\left(\|l^{\otimes k}\|^{\frac{1}{k}}\right)=\|l\|,
\end{equation}
and Conjecture \ref{conj: LengConjPure} holds
trivially  for ${\bf X}_{t}$.

A less trivial example is $l={\rm e}_{1}+[{\rm e}_{1},{\rm e}_{2}],$
in which we have
\begin{equation}
X^{2n}=\pi_{2n}(\exp(l))=\sum_{k=n}^{2n}\frac{1}{k!}\pi_{2n}\left(({\rm e}_{1}+[{\rm e}_{1},{\rm e}_{2}])^{\otimes k}\right).\label{eq: SimplestExam}
\end{equation}
A rather special observation in this example is that, the expansion of $\pi_{2n}(({\rm e}_{1}+[{\rm e}_{1},{\rm e}_{2}])^{\otimes k})$ is supported on disjoint sets of words for different $k$'s.
Suppose we work with the projective tensor norm induced from the standard
$l^{1}$-norm on $\mathbb{R}^{2}.$ 
It then follows that \[
\|X^{2n}\|=\sum_{k=n}^{2n}\frac{1}{k!}\left\Vert \pi_{2n}\left(({\rm e}_{1}+[{\rm e}_{1},{\rm e}_{2}])^{\otimes k}\right)\right\Vert \geqslant \frac{2^{n}}{n!}.
\]
In particular, 
\[
L_{2}({\bf X})\geqslant\limsup_{n\rightarrow\infty}\left(n!\|X^{2n}\|\right)^{\frac{1}{n}}=2=\|\pi_{2}(l)\|.
\]
Combining with the general upper bound to be established in Theorem
\ref{thm: upper} below, we see that Conjecture \ref{conj: LengConjPure}
 holds for ${\bf X}_{t}$.

However, it becomes much less clear how
similar calculations can be done even for the next simple candidate
$l={\rm e}_{1}+{\rm e}_{2}+[{\rm e}_{1},{\rm e}_{2}]$. Brute force
calculation does not give us much insight to proceed further. The main challenge of the problem lies in understanding the  complicated
interactions among different degree components of $l$ when looking
at the signature expansion at arbitrarily high degrees.

On the other hand, some extra mileage can still be achieved if we
work with the Hilbert-Schmidt tensor norm. Recall that the \textit{Hilbert-Schmidt tensor norm} over the tensor product of two Hilbert spaces $H_1,H_2$ is induced by \[
\langle v_{1}\otimes w_{1},v_{2}\otimes w_{2}\rangle_{H_{1}\otimes H_{2}}\triangleq\langle v_{1},v_{2}\rangle_{H_{1}}\cdot\langle w_{1},w_{2}\rangle_{H_{2}},\ \ \ v_1,v_2\in H_1,\ w_1,w_2\in H_2.
\]In this context, we can prove
the following result. We postpone the proof to Section \ref{sec: FreeLie}, whose strategy, based on orthogonality properties in free Lie algebras, is very different from the main approach of proving Theorem \ref{thm:MainThm}.
\begin{thm}
\label{thm: FreeLie}Let $V$ be a finite dimensional Hilbert space,
and let the tensor products be equipped with the induced Hilbert-Schmidt
tensor norm. Suppose that $\mathbf{X}_{t}=\exp(t(l_{a}+l_{b}))$,
where $l_{a}$, $l_{b}$ are homogeneous Lie polynomials of degrees
$a$, $b$ respectively for $a<b$. If $(b-a)/{\rm gcd}(a,b)$ is an
odd integer where "gcd" denotes the greatest common divisor, then Conjecture \ref{conj: LengConjPure} holds for ${\bf X}_{t}$.
\end{thm}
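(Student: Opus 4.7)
The plan is to prove $L_{b}(\mathbf{X}) \geq \|l_b\|$, since the matching upper bound is already provided by Theorem \ref{thm:MainThm}. Concretely I aim to show
\[\|\pi_{kb}(\exp(l_a+l_b))\|_{\mathrm{HS}} \;\geq\; \frac{\|l_b\|^{k}}{k!}\qquad\text{for every }k\geq1,\]
which on taking $k$-th roots yields $L_b(\mathbf{X})\geq\|l_b\|$ along the subsequence $n=kb$. The starting point is the Cauchy-Schwarz estimate $\|\pi_{kb}(\exp l)\|_{\mathrm{HS}}\geq|\langle\pi_{kb}(\exp l),l_b^{\otimes k}\rangle|/\|l_b\|^{k}$, reducing the problem to computing the inner product against the test vector $l_b^{\otimes k}$.

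Setting $d=\gcd(a,b)$, $a=da'$, $b=db'$, the only contributions to $\pi_{kb}((l_a+l_b)^{\otimes k'})$ come from the pairs $(p,q)=(tb',k-ta')$ with $t=0,1,\ldots,\lfloor k/a'\rfloor$, so that
\[\pi_{kb}(\exp(l_a+l_b))=\sum_{t\geq0}\frac{1}{(k+t(b'-a'))!}\,A_t,\qquad A_0=l_b^{\otimes k},\]
where $A_t$ denotes the sum of all concatenation arrangements of $tb'$ copies of $l_a$ and $k-ta'$ copies of $l_b$. The $t=0$ contribution to $\langle\pi_{kb}(\exp l),l_b^{\otimes k}\rangle$ is exactly $\|l_b\|^{2k}/k!$; the whole proof thus collapses to establishing the vanishing of every cross term
\[\langle A_t,\,l_b^{\otimes k}\rangle_{\mathrm{HS}}=0\qquad\text{for every }t\geq1.\]

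This vanishing is the main obstacle and the only place where the parity hypothesis $b'-a'\equiv 1\pmod 2$ is used. Each arrangement in $A_t$, paired against $l_b^{\otimes k}$, decomposes along a common refinement of the two block structures into a product of Hilbert-Schmidt contractions between $l_a$, $l_b$, and partial tensor slices thereof. Two orthogonality mechanisms combine. First, a Lie polynomial $l_m\in\mathcal{L}_m(V)$ sits in the ``Lie'' $S_m$-isotype of $V^{\otimes m}$, which is disjoint from the trivial symmetric isotype; in particular $\langle l_m,v^{\otimes m}\rangle=0$ and more generally $l_m$ is orthogonal to any fully symmetric concatenation of lower Lie polynomials (as illustrated by the computation $\langle l_1\otimes l_1,l_2\rangle=0$ used in Section \ref{sec: SpecExam}). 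Second, the parity hypothesis $b'-a'$ odd forces the $S_{kb}$-isotype generated by $A_t$ to be disjoint from the one containing $l_b^{\otimes k}$, so that Hilbert-Schmidt orthogonality between them holds automatically once the arrangement-symmetrization is carried out. The indispensability of the parity condition is confirmed by the case $(a,b)=(1,3)$ (with $b'-a'=2$ even), for which a direct computation using $l_1=e_1+e_2$ and $l_3=[e_1,[e_1,e_2]]$ produces a non-zero cross term $\langle A_1,l_3^{\otimes 2}\rangle\neq0$.

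Granted the cross-term vanishing, $\langle\pi_{kb}(\exp l),l_b^{\otimes k}\rangle=\|l_b\|^{2k}/k!$, so that $\|\pi_{kb}(\exp l)\|_{\mathrm{HS}}\geq\|l_b\|^{k}/k!$ and hence $(k!\,\|\pi_{kb}(\exp l)\|_{\mathrm{HS}})^{1/k}\geq\|l_b\|$. Together with the universal upper bound, this yields $L_b(\mathbf{X})=\|l_b\|=\|\pi_b(l)\|$, establishing Conjecture \ref{conj: LengConjPure} for such $\mathbf{X}$.
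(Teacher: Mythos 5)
Your setup is correct and in fact coincides with the paper's: you expand $\pi_{kb}(\exp(l_a+l_b))$ into the arrangement sums $A_t$, isolate the $t=0$ term $l_b^{\otimes k}/k!$, and reduce the whole theorem to the vanishing of the cross terms $\langle A_t,\,l_b^{\otimes k}\rangle=0$ for $t\geqslant1$ (your Cauchy--Schwarz step is an immaterial variant of the paper's Pythagorean argument). The genuine gap is that this vanishing --- the entire content of the theorem and the only place the hypothesis on $(b-a)/\gcd(a,b)$ enters --- is asserted, not proved. Your ``first mechanism'' (a Lie polynomial is orthogonal to symmetric tensors) does not apply as stated: neither $A_t$ nor $l_b^{\otimes k}$ is a single Lie element or a symmetric power of a vector, and once you pass to a ``common refinement'' the resulting factors are partial tensor slices of $l_a$ and $l_b$, which are not Lie polynomials, so no Lie-versus-symmetric orthogonality is available for them. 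Your ``second mechanism'' (the parity of $b'-a'$ forces disjointness of $S_{kb}$-isotypes) is unsubstantiated; no reason is given why the arithmetic parity of $(b-a)/\gcd(a,b)$ should control the symmetric-group isotype decomposition at all, and this is precisely the point that needs a proof.

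For comparison, the paper's mechanism is the isometric anti-involution $\alpha({\rm e}_{i_1}\otimes\cdots\otimes{\rm e}_{i_m})=(-1)^m{\rm e}_{i_m}\otimes\cdots\otimes{\rm e}_{i_1}$, which satisfies $\alpha(l)=-l$ on Lie polynomials and yields the key parity lemma: $\langle l_0^{\otimes k},\mathrm{Sym}(l_1,\cdots,l_n)\rangle=0$ whenever $k+n$ is odd. Applied to $A_t$ (a symmetrized product of $k+t(b'-a')$ Lie factors), this gives the vanishing directly only when $t(b'-a')$ is odd, i.e.\ only for odd $t$; for even $t\geqslant2$ a further argument is indispensable. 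The paper handles it by cutting each arrangement immediately before its $\left(\frac{b}{r}+1\right)$-th occurrence of $l_a$ (with $r=\gcd(a,b)$): the prefix then has degree a multiple of $b$, the Hilbert--Schmidt inner product factorizes across this cut against $l_b^{\otimes k}$ without slicing any Lie factor, and the prefix factor is again a reduced symmetrized product containing exactly $b/r$ copies of $l_a$, for which the parity count $2k_1+(b-a)/r$ is odd, so the lemma applies to that factor. Your proposal identifies neither the anti-involution as the source of the parity nor the need for (and the design of) this splitting in the even-$t$ case, so the argument as written does not close; the claimed counterexample for $(a,b)=(1,3)$ only confirms the hypothesis is needed and does not substitute for the missing proof. (A minor additional point: the vanishing need only be established for $k\geqslant a/r$, which is all the $\limsup$ requires.)
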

As an example, we immediately see that Conjecture \ref{conj: LengConjPure}
holds for $l={\rm e}_{1}+{\rm e}_{2}+[{\rm e}_{1},{\rm e}_{2}]$ under
the Hilbert-Schmidt tensor norm. However, the argument breaks down
if $(b-a)/{\rm gcd}(a,b)$ is an even number, or if $l$ has more than two homogeneous components.

The above special examples seem to suggest that, the key to getting
the lower bound is the concentration of the degree $km$ signature
expansion at the term $\pi_{m}(l)^{\otimes k}/k!$ as $k\rightarrow\infty.$
However, the picture can be much subtler in general. Some heuristic
estimates on magnitudes suggest that the signature expansion at degree $km$
is concentrated at a number of terms near $\pi_{m}(l)^{\otimes k}/k!$,
each possibly having comparable magnitudes. As $k\rightarrow\infty,$ the total
number of these terms seem to be of order $o(k)$, and there can be delicate
cancellations among them which are hard to analyze.

The main contribution of the present paper is to develop a general strategy
which on the one hand allows one to overcome the above difficulties
to some extent and on the other hand is specific enough to be implemented
computationally in order to generate explicit quantitative estimates in many
interesting examples.

\section{\label{sec: MainProof}Proof of Theorem \ref{thm:MainThm}}

Throughout the rest of this section, unless otherwise stated, let
$(V, \| \cdot \|)$ be a finite dimensional Banach space and let each tensor
product $V^{\otimes n}$ ($n\geqslant1$) be equipped with the projective
tensor norm. We work with a given pure $m$-rough path ${\bf X}_t = \exp(tl)$
defined by some Lie polynomial $l\in \mathcal{L}^{(m)}(V)$.  

We aim at studying
the relationship between the signature tail asymptotics of
${\bf X},$ defined by $L_{m}({\bf X})$ in (\ref{eq: DefTail}) with
$p=m,$ and the local $m$-variation of ${\bf X}$, which is also
equal to $\|\pi_{m}(l)\|$ by Proposition \ref{prop: LocalPVar}.
Our main result consists of uniform upper and lower estimates of $L_{m}({\bf X})$
in terms of $\|\pi_{m}(l)\|$. The techniques we develop for proving
the two estimates are drastically different. The upper estimate is
based on combinatorial arguments while the lower estimate relies
on the representation theory of complex semisimple Lie algebras.

\subsection{\label{subsec: Upper}The upper estimate}

We start by establishing the (sharp) upper bound. In this part,
more generality can be pursued: $V$ can be infinite dimensional,
tensor norms only need to be reasonable and $l$ need not be of Lie
type.
\begin{thm}
\label{thm: upper}We have the following upper estimate
\[
L_{m}({\bf X})\leqslant\|\pi_{m}(l)\|
\]
for all rough paths of the form ${\bf X}_t=\exp(tl)$ with $l$ being an arbitrary element in
$T^{(m)}(V).$
\end{thm}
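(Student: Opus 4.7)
The plan is to reduce the tensor estimate to a scalar estimate, then apply a saddle-point bound for the coefficients of an explicit entire function. Write $l=l_{1}+\cdots+l_{m}$ with $l_{i}:=\pi_{i}(l)\in V^{\otimes i}$, and set $\alpha_{i}:=\|l_{i}\|$. Expanding the tensor exponential and projecting onto degree $n$ gives
\[
\pi_{n}(\exp(l))=\sum_{k=1}^{n}\frac{1}{k!}\sum_{\substack{(j_{1},\ldots,j_{k})\in\{1,\ldots,m\}^{k}\\ j_{1}+\cdots+j_{k}=n}}l_{j_{1}}\otimes\cdots\otimes l_{j_{k}}.
\]
By the triangle inequality combined with property (ii) of reasonable tensor algebra norms, the scalar majorization
\[
\|\pi_{n}(\exp(l))\|\leqslant a_{n}:=[t^{n}]\,e^{P(t)},\qquad P(t):=\alpha_{1}t+\alpha_{2}t^{2}+\cdots+\alpha_{m}t^{m},
\]
holds. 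Because $P$ has nonnegative coefficients, $e^{P(t)}$ has nonnegative Taylor coefficients and one has the trivial but crucial bound $a_{n}\leqslant r^{-n}e^{P(r)}$ for every $r>0$.

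The heart of the argument is the correct $n$-dependent choice of $r$. Fix $R>\alpha_{m}$ (or an arbitrary $R>0$ when $\alpha_{m}=0$) and take
\[
r_{n}:=\bigl(n/(mR)\bigr)^{1/m}.
\]
The dominant monomial gives $\alpha_{m}r_{n}^{m}=(\alpha_{m}/R)(n/m)$, while the lower-degree monomials $\alpha_{i}r_{n}^{i}$ contribute only $O(n^{(m-1)/m})$; hence $P(r_{n})=(\alpha_{m}/R)(n/m)+O(n^{(m-1)/m})$. Combined with Stirling's formula $\log(n/m)!=(n/m)\log(n/m)-n/m+O(\log n)$, a direct computation (the $\log(n/m)$ terms cancelling against $\log r_{n}$) gives
\[
\log\bigl((n/m)!\cdot r_{n}^{-n}e^{P(r_{n})}\bigr)=\frac{n}{m}\bigl(\log R-1+\alpha_{m}/R\bigr)+O(n^{(m-1)/m}).
\]

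Raising to the power $m/n$ and taking $\limsup$ yields $L_{m}(\mathbf{X})\leqslant Re^{\alpha_{m}/R-1}$. The function $R\mapsto Re^{\alpha_{m}/R-1}$ attains its unique minimum at $R=\alpha_{m}$ with value $\alpha_{m}$ when $\alpha_{m}>0$, and decreases monotonically to $0$ as $R\to 0^{+}$ when $\alpha_{m}=0$. In either case we conclude $L_{m}(\mathbf{X})\leqslant\alpha_{m}=\|\pi_{m}(l)\|$. I do not expect a serious obstacle: the argument is elementary analysis on scalar generating functions, depends only on reasonableness of the tensor norms (via submultiplicativity), and requires neither finite-dimensionality of $V$, the Lie structure of $l$, nor the specific projective tensor norm. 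The only care needed is uniformity of the saddle-point bound in $n$, which is automatic from the explicit $r_{n}$.
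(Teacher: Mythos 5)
Your proof is correct, and after its first step it takes a genuinely different route from the paper's. The scalar majorization $\|\pi_{n}(\exp(l))\|\leqslant[t^{n}]e^{P(t)}$ with $P(t)=\sum_{i}\|l_{i}\|t^{i}$ is exactly the paper's combinatorial bound (\ref{eq: upper}): the right-hand side there is precisely the coefficient of $t^{n}$ in $\prod_{i}e^{\|l_{i}\|t^{i}}$. From that point the paper collapses the lower-degree variables via the multivariate neoclassical inequality of Friz--Riedel (Lemma \ref{lem: neo}) and then invokes the auxiliary analytic Lemma \ref{lem: upper} (which itself uses the neoclassical inequality again together with Stirling). You instead use the elementary Chernoff/saddle-point bound $[t^{n}]e^{P(t)}\leqslant r^{-n}e^{P(r)}$, legitimate because $e^{P}$ has nonnegative coefficients, with the $n$-dependent radius $r_{n}=(n/(mR))^{1/m}$, and then optimize $R\mapsto Re^{\|l_{m}\|/R-1}$ at $R=\|l_{m}\|$. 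The computation checks out: $\alpha_{m}r_{n}^{m}=(\alpha_{m}/R)(n/m)$, the terms $\alpha_{i}r_{n}^{i}$ with $i<m$ are $O(n^{(m-1)/m})$, Stirling makes the $\frac{n}{m}\log(n/m)$ terms cancel against $-n\log r_{n}$, and the error terms vanish after multiplying by $m/n$, so $\left((n/m)!\,\|X^{n}\|\right)^{m/n}\leqslant Re^{\alpha_{m}/R-1}+o(1)$; the degenerate case $\alpha_{m}=0$ is covered by letting $R\to0^{+}$, and restricting to $R>\alpha_{m}$ is harmless since the infimum over that range is still $\alpha_{m}$. What your argument buys is self-containedness and the same (indeed explicitly stated) generality as the paper's: arbitrary Banach space $V$, arbitrary reasonable tensor norms, arbitrary $l\in T^{(m)}(V)$, with no appeal to the external neoclassical inequality, and it makes transparent why the sharp constant is $\|l_{m}\|$ (the saddle point is governed by the top-degree monomial). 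What the paper's route buys is a reusable intermediate statement, Lemma \ref{lem: upper}, which isolates the competition between the two factorial scales $(j\beta)!$ and $((n-j)\alpha)!$ in a form that can be cited independently, whereas your estimate is folded directly into the final limsup.
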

Our proof of Theorem \ref{thm: upper} relies on a multivariate neoclassical
inequality proved by Friz-Riedel \cite{FR11}. The bivariate version
was proved by Hara-Hino \cite{HH10}.

\begin{lem}[cf. \cite{FR11}, Lemma 1]\label{lem: neo}Suppose that
$a_{1},\cdots,a_{m}>0$, $p\geqslant1$ and $n\in\mathbb{N}.$ Then
we have 
\[
\sum_{\substack{0\leqslant k_{1},\cdots,k_{m}\leqslant n\\
k_{1}+\cdots+k_{m}=n
}
}\frac{a_{1}^{k_{0}/p}\cdots a_{m}^{k_{m}/p}}{(k_{1}/p)!\cdots(k_{m}/p)!}\leqslant p^{m-1}\cdot\frac{(a_{1}+\cdots+a_{m})^{n/p}}{(n/p)!}.
\]

\end{lem}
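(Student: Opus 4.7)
The plan is to prove the inequality by induction on the number of variables $m$, reducing everything to the sharp bivariate neoclassical inequality of Hara-Hino, which serves as the base case $m=2$.

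\textbf{Reduction to the bivariate case.} Assuming the inequality holds for $m-1$ variables, I would single out the last index $k_m$ and factor the sum as
\begin{align*}
\sum_{\substack{k_1+\cdots+k_m = n}} \frac{\prod_{i=1}^m a_i^{k_i/p}}{\prod_{i=1}^m (k_i/p)!}
= \sum_{k_m=0}^{n} \frac{a_m^{k_m/p}}{(k_m/p)!} \sum_{\substack{k_1+\cdots+k_{m-1}=n-k_m}} \frac{\prod_{i=1}^{m-1} a_i^{k_i/p}}{\prod_{i=1}^{m-1} (k_i/p)!}.
\end{align*}
The inner sum is controlled by the inductive hypothesis by $p^{m-2}\cdot(a_1+\cdots+a_{m-1})^{(n-k_m)/p}/((n-k_m)/p)!$. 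What remains is a bivariate sum in the variables $a_m$ and $A := a_1+\cdots+a_{m-1}$, bounded by $p \cdot (a_1+\cdots+a_m)^{n/p}/(n/p)!$ using the $m=2$ case. Multiplying the two factors $p^{m-2}$ and $p$ yields the desired factor $p^{m-1}$ and completes the induction.

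\textbf{The bivariate case.} This is where the real work lies. By homogeneity, substituting $a = x(a+b)$ and $b = (1-x)(a+b)$ reduces the case $m=2$ to the uniform bound
\[
B_n(x) \;:=\; \sum_{k=0}^{n} \frac{\Gamma(n/p+1)}{\Gamma(k/p+1)\,\Gamma((n-k)/p+1)}\, x^{k/p}(1-x)^{(n-k)/p} \;\leq\; p, \qquad x \in [0,1].
\]
I would approach this via an integral representation. Using the Beta integral
\[
\frac{\Gamma(\alpha+1)\,\Gamma(\beta+1)}{\Gamma(\alpha+\beta+2)} \;=\; \int_0^1 t^\alpha(1-t)^\beta\,dt,
\]
together with the substitution $s = t^{1/p}$ (which produces a Jacobian factor of $p$), each summand of $B_n(x)$ admits an integral representation over $[0,1]$. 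Summing under the integral yields a representation of $B_n(x)$ as a single integral of a positive kernel against a bounded measure; one then estimates this uniformly by $p$ either through a pointwise bound on the kernel or through an explicit evaluation of a geometric-type sum in $k$ that appears after swapping the orders of summation and integration.

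\textbf{Main obstacle.} The entire weight of the argument rests on the bivariate inequality with the sharp constant $p$, which is Hara-Hino's improvement over Lyons' earlier $p^2$. The difficulty is that when $p > 1$, the generalised binomial coefficients $\Gamma(n/p+1)/(\Gamma(k/p+1)\,\Gamma((n-k)/p+1))$ no longer sum in closed form as they do in the classical case $p = 1$ (where one recovers the ordinary binomial identity and $B_n(x) = 1$ exactly). Controlling the deviation from that identity and pinning down the sharp constant $p$ is the essentially analytic content of the lemma; once the bivariate bound is in hand, the extension to arbitrary $m$ is the routine bookkeeping induction outlined above.
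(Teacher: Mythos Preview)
The paper does not give its own proof of this lemma; it is stated with a direct citation to Friz--Riedel \cite{FR11}, and the text immediately before it notes that the bivariate version is due to Hara--Hino \cite{HH10}. Your induction on $m$ reducing to the bivariate Hara--Hino inequality is precisely the argument in \cite{FR11}, so your proposal is correct and coincides with the cited proof; note that since \cite{HH10} is already a reference in the paper, you may simply invoke it for the base case rather than attempting the Beta-integral sketch you describe.
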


We also need the following analytic lemma.
\begin{lem}
\label{lem: upper}Suppose that $0<\alpha<\beta\leqslant1$ and $a,b>0.$
Then we have 
\[
\limsup_{n\rightarrow\infty}\left((n\alpha)!\sum_{j=0}^{n}\frac{a^{j\alpha}b^{(n-j)\alpha}}{(j\beta)!\left((n-j)\alpha\right)!}\right)^{\frac{1}{n\alpha}}\leqslant b.
\]
\end{lem}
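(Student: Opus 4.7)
The plan is to show that $T_n:=\sum_{j=0}^n a^{j\alpha}b^{(n-j)\alpha}/[(j\beta)!((n-j)\alpha)!]$, after multiplication by $(n\alpha)!$, is asymptotically dominated by $b^{n\alpha}$ up to a subexponential correction. The strict inequality $\beta>\alpha$ is crucial: it forces $(j\beta)!$ to grow strictly faster than $(j\alpha)!$ on the superexponential scale, which lets us absorb the weight $a^{j\alpha}$ for large $j$ and reduce the estimate to the standard neoclassical inequality (Lemma \ref{lem: neo}).

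Fix $\varepsilon>0$. Stirling's formula yields
\[
\log\frac{(j\beta)!}{(j\alpha)!}=(\beta-\alpha)\,j\log j+O(j)\quad(j\to\infty),
\]
so there exists a threshold $J=J(\varepsilon,a,\alpha,\beta)$, independent of $n$, such that
\[
\frac{a^{j\alpha}}{(j\beta)!}\leqslant\frac{\varepsilon^{j\alpha}}{(j\alpha)!}\quad\text{for all }j>J.
\]
Split $T_n=T_n^{(1)}+T_n^{(2)}$ according to $0\leqslant j\leqslant J$ and $j>J$. For the tail, the above bound combined with Lemma \ref{lem: neo} (applied with $p=1/\alpha$ and $m=2$) gives
\[
(n\alpha)!\,T_n^{(2)}\leqslant(n\alpha)!\sum_{j=0}^n\frac{\varepsilon^{j\alpha}b^{(n-j)\alpha}}{(j\alpha)!\,((n-j)\alpha)!}\leqslant\frac{1}{\alpha}(\varepsilon+b)^{n\alpha}.
\]
For the head, $J$ is fixed and Stirling's formula gives $\Gamma(n\alpha+1)/\Gamma((n-j)\alpha+1)=O(n^{j\alpha})$ uniformly on $0\leqslant j\leqslant J$; a short computation, separating the cases $b\geqslant 1$ and $b<1$ and absorbing the constant factor $b^{\pm J\alpha}$, yields $(n\alpha)!\,T_n^{(1)}\leqslant C_J\,n^{J\alpha}\,b^{n\alpha}$. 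Combining both bounds and taking the $1/(n\alpha)$-th root, the polynomial prefactor satisfies $(n^{J\alpha})^{1/(n\alpha)}\to 1$ and is therefore negligible, so
\[
\limsup_{n\to\infty}\bigl((n\alpha)!\,T_n\bigr)^{1/(n\alpha)}\leqslant b+\varepsilon.
\]
The conclusion follows by letting $\varepsilon\downarrow 0$.

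The argument is analytic in nature, with no conceptual obstacle. The only mild care required is to establish the Stirling asymptotic for $\Gamma(j\beta+1)/\Gamma(j\alpha+1)$ uniformly once $j$ exceeds the threshold $J$, which is where the assumption $\beta>\alpha$ is essential; if one had $\beta=\alpha$, the same scheme collapses and produces only the weaker bound $a+b$, consistent with a direct application of the neoclassical inequality.
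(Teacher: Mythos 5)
Your proposal is correct and follows essentially the same route as the paper's proof: the same split of the sum at a threshold $J$ determined by Stirling's comparison of $(j\beta)!$ with $(j\alpha)!$, the same application of the neoclassical inequality (Lemma \ref{lem: neo} with $m=2$, $p=1/\alpha$) to the tail, and the same polynomial-times-$b^{n\alpha}$ bound for the head. The only (cosmetic) difference is that you absorb $a^{j\alpha}$ into $\varepsilon^{j\alpha}$ so the final bound reads $b+\varepsilon$ rather than the paper's $b+\varepsilon^{1/\alpha}a$, which makes no difference after letting $\varepsilon\downarrow 0$.
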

\begin{proof}
From Stirling's approximation, we know that 
\[
\frac{(j\alpha)!}{(j\beta)!}\sim\sqrt{\frac{\alpha}{\beta}}\left(\frac{\alpha^{\alpha}{\rm e}^{\beta-\alpha}}{\beta^{\beta}}\right)^{j}j^{(\alpha-\beta)j},\ \ \ j\rightarrow\infty
\]
In particular, given any arbitrary $\varepsilon>0,$ there exists
$J\geqslant1$ such that 
\[
\frac{(j\alpha)!}{(j\beta)!}\leqslant\varepsilon^{j}\ \ \ \forall j\geqslant J.
\]
It follows that
\begin{align}
\sum_{j=0}^{n}\frac{a^{j\alpha}b^{(n-j)\alpha}}{(j\beta)!\left((n-j)\alpha\right)!} & \leqslant\sum_{j=0}^{J-1}\frac{a^{j\alpha}b^{(n-j)\alpha}}{(j\beta)!\left((n-j)\alpha\right)!}+\sum_{j=J}^{n}\frac{\left(\varepsilon^{1/\alpha}a\right)^{j\alpha}b^{(n-j)\alpha}}{(j\alpha)!\left((n-j)\alpha\right)!}.\label{eq: UpperLem2}
\end{align}

To estimate the first term on the right hand side of (\ref{eq: UpperLem2}),
using Stirling's approximation again, it is easily seen that 
\[
\frac{(n\alpha)!}{(j\beta)!\left((n-j)\alpha\right)!}a^{j\alpha}b^{(n-j)\alpha}\leqslant Cn^{J\alpha}b^{n\alpha}\ \ \ {\rm for\ all}\ 0\leqslant j<J,
\]
where $C$ is a constant depending on $a,b,\alpha,\beta$ and $J$.
To estimate the second term on the right hand side of (\ref{eq: UpperLem2}),
using Lemma \ref{lem: neo} with $m=2$ and $p=1/\alpha$, we have
\[
\sum_{j=J}^{n}\frac{\left(\varepsilon^{1/\alpha}a\right)^{j\alpha}b^{(n-j)\alpha}}{(j\alpha)!\left((n-j)\alpha\right)!}\leqslant\frac{\left(\varepsilon^{1/\alpha}a+b\right)^{n\alpha}}{\alpha(n\alpha)!}.
\]
By substituting the above two estimates into (\ref{eq: UpperLem2}),
we have
\[
(n\alpha)!\sum_{j=0}^{n}\frac{a^{j\alpha}b^{(n-j)\alpha}}{(j\beta)!\left((n-j)\alpha\right)!}\leqslant Cn^{J\alpha}b^{n\alpha}+\frac{\left(\varepsilon^{1/\alpha}a+b\right)^{n\alpha}}{\alpha}.
\]

Therefore, by taking $n\rightarrow\infty,$ we arrive at
\[
\limsup_{n\rightarrow\infty}\left((n\alpha)!\sum_{j=0}^{n}\frac{a^{j\alpha}b^{(n-j)\alpha}}{(j\beta)!\left((n-j)\alpha\right)!}\right)^{\frac{1}{n\alpha}}\leqslant\varepsilon^{1/\alpha}a+b,
\]
which yields the result since $\varepsilon$ is arbitrary.
\end{proof}
With the help of the above two lemmas, we can now give the proof of
Theorem \ref{thm: upper}.

\begin{proof}[Proof of Theorem \ref{thm: upper}]

Given ${\bf X}_t=\exp(tl)$ with $l\in T^{(m)}(V)$, we write $l=l_{1}+\cdots+l_{m}$
where $l_{i}\in V^{\otimes i}.$ For each $n\geqslant1,$ the $n$-th
degree signature of ${\bf X}$ can be estimated by
\begin{align*}
\|X^{n}\| & =\|\pi_{n}\left(\exp(l)\right)\|\\
 & =\left\Vert \sum_{k=0}^{\infty}\frac{1}{k!}\pi_{n}\left((l_{1}+\cdots+l_{m})^{\otimes k}\right)\right\Vert \\
 & \leqslant\ \sum_{k=0}^{\infty}\frac{1}{k!}\sum_{\substack{1\leqslant i_{1},\cdots,i_{k}\leqslant m\\
i_{1}+\cdots+i_{k}=n
}
}\|l_{i_{1}}\|\cdots\|l_{i_{k}}\|\\
 & =\sum_{\substack{j_{1},\cdots,j_{m}\geqslant0\\
j_{1}+2j_{2}+\cdots+mj_{m}=n
}
}\frac{\|l_{1}\|^{j_{1}}\cdots\|l_{m}\|^{j_{m}}}{j_{1}!\cdots j_{m}!}.
\end{align*}
To reach the last equality, we have used a different way to count
terms that have a total degree of $n$ in the expansion of $(\|l_{1}\|+\cdots+\|l_{m}\|)^{k}.$
By applying change of variables $k_{r}=rj_{r}$ ($1\leqslant r\leqslant m$),
we arrive at 
\begin{equation}
\|X^{n}\|\leqslant\sum_{\substack{k_{1},\cdots,k_{m}\geqslant0\\
k_{1}+\cdots+k_{m}=n
}
}\frac{\|l_{1}\|^{k_{1}}\|l_{2}\|^{k_{2}/2}\cdots\|l_{m}\|^{k_{m}/m}}{k_{1}!(k_{2}/2)!\cdots(k_{m}/m)!}.\label{eq: upper}
\end{equation}

Next, for each fixed $k_{m},$ by using Lemma \ref{lem: neo} with
$p=m-1$, we see that 
\[
\sum_{\substack{k_{1},\cdots,k_{m-1}\geqslant0\\
k_{1}+\cdots+k_{m-1}=n-k_{m}
}
}\frac{\|l_{1}\|^{k_{1}}\cdots\|l_{m-1}\|^{k_{m-1}/(m-1)}}{k_{1}!\cdots\left(k_{m-1}/(m-1)\right)!}\leqslant(m-1)^{m-2}\cdot\frac{a^{(n-k_{m})/m}}{((n-k_{m})/(m-1))!},
\]
where 
\[
a\triangleq\left(\sum_{r=1}^{m-1}\|l_{r}\|^{\frac{m-1}{r}}\right)^{\frac{m}{m-1}}.
\]
By substituting this into (\ref{eq: upper}), we obtain
\[
\|X^{n}\|\leqslant(m-1)^{m-2}\sum_{k_{m}=0}^{n}\frac{a^{(n-k_{m})/m}\|l_{m}\|^{k_{m}/m}}{(k_{m}/m)!((n-k_{m})/(m-1))!}.
\]

Now the result follows from Lemma \ref{lem: upper} with $\alpha=1/m$,
$\beta=1/(m-1)$ and $b=\|l_{m}\|.$

\end{proof}
\begin{rem}
The upper estimate given by Theorem \ref{thm: upper} is
sharp, which can be easily seen by considering the case when $l$ is homogeneous
(i.e. when $l\in V^{\otimes m}$).
\end{rem}

\subsection{\label{subsec: LowerBdd}The core of the matter: Lie algebraic developments
and the lower estimate}

Now we turn our attention to establishing a matching lower bound, which is the core of the present paper. The philosophy of our main strategy can be briefly summarized as follows.

Our starting point is to look at the development of paths into a space
of automorphisms associated with a given representation of the tensor
algebra. This enables us to obtain an intermediate lower estimate of
$L_{m}({\bf X})$ in terms of eigenvalues of the highest degree Lie
component defining $\mathbf{X}$ under the given representation, and thus also allows us to eliminate the subtle contributions arising from the presence of lower degree Lie components.

The next key point, which
leads us to the main lower estimate, is to allow the representation
factor through a complex semisimple Lie algebra. In this way, the associated representation
theory enables us to study eigenvalues of the highest degree Lie polynomial at an explicit and quantitative level.
This is largely due to the presence of an abelian subalgebra (a so-called Cartan subalgebra) consisting
of semisimple elements, a basic feature of semisimple Lie algebras
that is quite different from nilpotent (or more generally, solvable)
Lie algebras. A crucial step towards making good use of such feature is to develop highest degree Lie polynomials into this Cartan subalgebra. 

Our plan of proving the main lower estimate is organized in the following way, which also underlines the main ingredients of our strategy.\\ 
\\
\textbf{Organization of this subsection.} In Section \ref{sec: LieAlgDev}, we introduce the notion of Lie algebraic developments, which is a main tool we will be using for proving our lower estimate. In Section \ref{sec: IntLower}, we prove an intermediate lower estimate using path developments and finite dimensional perturbation theory. Section \ref{sec: MainLower} is devoted to reaching the ultimate lower estimate from the intermediate one, and for this purpose it is further divided into four parts. Part I contains a quick review on several notions from the representation theory of complex semisimple Lie algebras that are needed in our approach. In Part II, we develop ways of mapping a space of homogeneous Lie polynomials into a Cartan subalgebra, by using basic root patterns from semisimple Lie theory. Part III is devoted to the proof of a  consistency lemma for certain polynomial systems, which is a crucial ingredient in order to obtain a uniform lower estimate. In Part IV, having all necessary ingredients at hand, we give the proof of our main lower estimate by designing appropriate Lie algebraic developments. In Section \ref{subsec: LowDeg}, we perform explicit calculations in low degree cases to demonstrate how our strategy can be implemented specifically, leading to the sharp lower bound in certain situations.

\subsubsection{Lie algebraic developments of rough paths}\label{sec: LieAlgDev}

To describe the necessary structures efficiently, we start with the
following definition.

\begin{defn}
\label{def: AlgDev}Let $V$ be a real or complex Banach space. A
\textit{Lie algebraic development} $\Phi$ of $V$ consists of a linear
map $F:V\rightarrow\mathfrak{g}$ into a complex Lie algebra $\mathfrak{g}$
and a representation $\rho:\mathfrak{g}\rightarrow{\rm End}(W)$ of
$\mathfrak{g}$ on a complex Banach space $W$ such that $\Phi=\rho\circ F$
is continuous, where ${\rm End}(W)$ denotes the space of continuous
linear transformations over $W$. The development $\Phi$ is said to be \textit{finite
dimensional} if $\mathfrak{g}$ and $W$ are both finite dimensional. In situations when the intermediate Lie algebra $\mathfrak{g}$ is not relevant, we simply refer to $\Phi:V\rightarrow\mathrm{End}(W)$ as a \textit{development}.
\end{defn}

\begin{rem}
When $V$ is real, linearity is understood over $\mathbb{R}$ by regarding a complex vector space as a real vector space in the obvious way. 
\end{rem}

Let $\Phi:V\rightarrow{\rm End}(W)$ be a given development. According to the universal property of the
projective tensor product (cf. \cite{LQ02}, Theorem 5.6.3), for
each $n\geqslant1,$ $\Phi$ induces a continuous linear map $\Phi^{(n)}:V^{\otimes n}\rightarrow{\rm End}(W)$
such that
\[
\Phi(v_{1}\otimes\cdots\otimes v_{n})=\Phi(v_{1})\cdots\Phi(v_{n})
\]
and 
\begin{equation}
\|\Phi^{(n)}\|_{V^{\otimes{n}}\rightarrow\mathrm{End(W)}}\leqslant\|\Phi\|_{V\rightarrow\mathrm{End}(W)}^{n}.\label{eq: UniProj}
\end{equation}
It follows that $\Phi$ induces a natural algebra homomorphism
from a subspace of $T((V))$ to ${\rm End}(W),$ which is defined by (still denoted
as $\Phi$)
\[
\Phi\left((\xi_{0},\xi_{1},\xi_{2},\cdots)\right)\triangleq\xi_{0}\cdot{\rm Id}+\sum_{n=1}^{\infty}\Phi^{(n)}(\xi_{n}),
\]
provided that the sum on the right hand side is convergent under the
operator norm on ${\rm End}(W).$ In addition, $\Phi$ descends to
a natural Lie algebra homomorphism from the free Lie algebra ${\cal L}(V)=\oplus_{n=1}^\infty \mathcal{L}_n(V)$ over $V$
 into ${\rm End}(W).$

Under the given development $\Phi$, every rough path $({\bf X}_{t})_{0\leqslant t\leqslant T}$
over $V$ can be developed onto ${\rm Aut}(W),$ the space of automorphisms
over $W,$ by solving the linear ODE 
\begin{equation}
\begin{cases}
d\Gamma_{t}=\Gamma_{t}\cdot\Phi(d{\bf X}_{t}),\ 0\leqslant t\leqslant T,\\
\Gamma_{0}={\rm Id}.
\end{cases}\label{eq: EqnDev}
\end{equation}
Using Picard's iteration, it is easily seen that 
\begin{align}
\Gamma_{t} & =\sum_{n=0}^{\infty}\int_{0<t_{1}<\cdots<t_{n}<t}\Phi(d{\bf X}_{t_{1}})\cdots\Phi(d{\bf X}_{t_{n}})\nonumber \\
 & =\sum_{n=0}^{\infty}\Phi^{(n)}\left(\int_{0<t_{1}<\cdots<t_{n}<t}d{\bf X}_{t_{1}}\otimes\cdots\otimes d{\bf X}_{t_{n}}\right)\nonumber \\
 & =\Phi(\mathbb{X}_{0,t}),\label{eq: FormDev}
\end{align}
where $\mathbb{X}_{0,t}$ is the Lyons' extension of $\mathbf{X}$ given by Theorem \ref{thm: LyonsExt}.
Note that by factorial decay inequality in the same theorem,
$\Phi(\mathbb{X}_{0,t})$ is well defined. In particular, we have $\Gamma_{T}=\Phi(S(\mathbf{X}))$.
\begin{rem}
In the above discussion, the intermediate Lie algebra $\mathfrak{g}$
and the complex structure appearing in Definition \ref{def: AlgDev} are not so relevant yet, and the structure used
here is simply a representation of the tensor algebra. Their
roles will become clear later on when we look for explicit quantitative
lower estimates for the signature.
\end{rem}
The viewpoint of developing Euclidean paths onto a Lie group was essentially
due to Cartan and had been used by many authors for geometric reasons.
We give an example which is related to studies on path signatures.
\begin{example}
Hambly-Lyons \cite{HL10} proved the asymptotics formula
(\ref{eq: BVLengthConj}) for $C^{1}$-paths by developing the underlying
path onto the space of constant negative curvature. Under the notion
of Lie algebraic developments, in their case $V=\mathbb{R}^{d},$ 
\[
\mathfrak{g}=\left\{ \left(\begin{array}{cc}
A & x\\
x^{T} & 0
\end{array}\right)\in{\rm Mat}(d+1;\mathbb{R}):A\in\mathfrak{so}(d),x\in\mathbb{R}^{d}\right\} 
\]
is the Lie algebra of the isometry group for the standard $d$-dimensional
hyperboloid. The embedding $F:V\rightarrow\mathfrak{g}$ is given by
\[
F(x)\triangleq\left(\begin{array}{cc}
0 & x\\
x^{T} & 0
\end{array}\right),\ \ \ x\in\mathbb{R}^{d},
\]
and $\rho:\mathfrak{g}\rightarrow{\rm End}(W)$ is the canonical matrix
representation with $W=\mathbb{R}^{d+1}.$ Rather than looking at
the developed path $\Gamma_{t}$ in the isometry group, the authors
worked with the trajectory on the hyperboloid traced out by the
action of $\Gamma_{t}$ on a base point of the hyperboloid. Their main philosophy, which
is rather geometric, is to make use of exotic properties of hyperbolic
geodesics which do not have Euclidean counterparts. Related results
by Lyons-Xu \cite{LX15} for studying signature inversion
and by Boedihardjo-Geng \cite{BG17} for studying tail asymptotics
of the Brownian signature, are also based on similar geometric insights.
In this hyperbolic picture, there is no need to work with complex
structure appearing in Definition \ref{def: AlgDev}.
\end{example}
In contrast to the hyperbolic geometric ideas, our approach deviates
from the aforementioned works by not projecting the path onto a base
manifold which the group acts on. Instead of following geometric intuitions,
we look at path developments from an algebraic viewpoint, which provides a more suitable framework for the implementation of representation-theoretic techniques. 

\subsubsection{An intermediate lower estimate}\label{sec: IntLower}

Using the notion of developments, we are led to a general lower estimate
which holds for arbitrary rough paths. A similar estimate already
appeared in \cite{BG17} for the hyperbolic development of Brownian
motion. Given any $p$-rough path $\mathbf{X}_t$ and $\lambda>0$, we use $\delta_\lambda(\mathbf{X}_t)$ to denote the dilated path $(1,\lambda X_t^1,\cdots,\lambda^{\lfloor p\rfloor}X_t^{\lfloor p\rfloor})$.
\begin{prop}
\label{prop: FirstLBdd}Let $({\bf X}_{t})_{0\leqslant t\leqslant T}$
be a $p$-rough path over some Banach space $V$. For any given
 non-zero development $\Phi:V\rightarrow{\rm End}(W),$ we have the
following lower estimate for the signature tail asymptotics of ${\bf X}$:
\begin{equation}
L_{p}({\bf X})\geqslant\limsup_{\lambda\rightarrow\infty}\frac{\log \|\Gamma_{T}^{\lambda}\|_{W\rightarrow W}}{\left(\lambda\|\Phi\|_{V\rightarrow{\rm End}(W)}\right)^{p}},\label{eq: FirstLBdd}
\end{equation}
where for $\lambda>0,$ $(\Gamma_{t}^{\lambda})_{0\leqslant t\leqslant T}$
denotes the development of the dilated path $\delta_\lambda(\mathbf{X}_t)$ under $\Phi,$ defined by the ODE (\ref{eq: EqnDev}).
\end{prop}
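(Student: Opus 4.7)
My plan is to express $\Gamma_T^\lambda$ as the image of the Lyons extension of the dilated path under $\Phi$, bound its operator norm by a scalar series whose asymptotic growth in $\lambda$ is governed by $L_p(\mathbf{X})$, and then pass from the series estimate to the claim via a Mittag-Leffler-type asymptotic.

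First I would note that, because Lyons' extension in Theorem \ref{thm: LyonsExt} is unique and multiplicative, it is equivariant under dilations: the $n$-th level of the extended signature of $\delta_\lambda\mathbf{X}$ is $\lambda^n X^n_{0,T}$ for every $n\geq 1$. Applying formula (\ref{eq: FormDev}) to $\delta_\lambda\mathbf{X}$ and using the norm bound (\ref{eq: UniProj}) term-by-term, with $M\triangleq\|\Phi\|_{V\to\mathrm{End}(W)}$, we arrive at
\begin{equation*}
\|\Gamma_T^\lambda\|_{W\to W} \;\leq\; 1 + \sum_{n=1}^\infty (\lambda M)^n\,\|X^n_{0,T}\|.
\end{equation*}

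Next I would insert the consequence of the definition $L_p(\mathbf{X})=\limsup_n ((n/p)!\|X^n_{0,T}\|)^{p/n}$: for any $\varepsilon>0$ there exists $N=N(\varepsilon)$ such that
\begin{equation*}
\|X^n_{0,T}\| \;\leq\; \frac{(L_p(\mathbf{X})+\varepsilon)^{n/p}}{(n/p)!} \qquad \forall\, n\geq N.
\end{equation*}
The first $N$ terms contribute a polynomial $P(\lambda)$, and the remainder is dominated by the Mittag-Leffler-type series
\begin{equation*}
\mathcal{E}_{1/p}(x) \;\triangleq\; \sum_{n=0}^\infty \frac{x^n}{(n/p)!}, \qquad x \;=\; \lambda M\,(L_p(\mathbf{X})+\varepsilon)^{1/p}.
\end{equation*}
A Stirling/saddle-point analysis, equivalent to the classical Mittag-Leffler asymptotic, gives $\log \mathcal{E}_{1/p}(x) = x^p + o(x^p)$ as $x\to\infty$ (the exponent arises by maximizing $n\log x - (n/p)\log(n/(pe))$, which attains its maximum $x^p$ near $n^\ast\approx px^p$). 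Combining with the polynomial remainder yields
\begin{equation*}
\log \|\Gamma_T^\lambda\|_{W\to W} \;\leq\; (\lambda M)^p\bigl(L_p(\mathbf{X})+\varepsilon\bigr) + o(\lambda^p).
\end{equation*}

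Dividing through by $(\lambda M)^p$, taking $\limsup_{\lambda\to\infty}$, and then sending $\varepsilon\downarrow 0$ produces (\ref{eq: FirstLBdd}). The only non-routine step is confirming the exact leading coefficient in the asymptotic $\log \mathcal{E}_{1/p}(x)\sim x^p$: a looser bound would give a spurious multiplicative constant and prevent the clean form of the proposition. Everything else is straightforward manipulation via the triangle inequality, the universal property (\ref{eq: UniProj}), and the dilation-equivariance of Lyons' extension.
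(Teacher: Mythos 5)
Your argument is correct and follows essentially the same route as the paper: split the scalar series $\sum_n(\lambda M)^n\|X^n_{0,T}\|$ at a threshold $N$, control the tail by a Mittag--Leffler-type series, and extract the leading exponent $x^p$ before letting the threshold parameter go to its limit (your $\varepsilon\downarrow 0$ plays the role of the paper's $\inf_N L_N=L_p(\mathbf{X})$). The only presentational difference is that where you invoke the classical asymptotic $\log\mathcal{E}_{1/p}(x)\sim x^p$, the paper proves the needed upper bound elementarily via $\sum_{n\geqslant 0}x^{n/p}/(n/p)!\leqslant(p+1)x\mathrm{e}^{x}$ for $x\geqslant 1$, which suffices since only the $\leqslant$ direction is required.
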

\begin{proof}
According to the formula (\ref{eq: FormDev}) for path developments,
we have 
\[
\Gamma_{T}^{\lambda}=\sum_{n=0}^{\infty}\lambda^{n}\Phi^{(n)}\left(X^{n}\right),
\]where $X^n$ is the degree $n$ component of the signature of $\mathbf{X}$.
For given $N\geqslant1,$ define 
\[
L_{N}\triangleq\sup_{n\geqslant N}\left(\left(\frac{n}{p}\right)!\|X^{n}\|\right)^{\frac{p}{n}},
\]which is finite according to the factorial estimate (\ref{eq: FactEst}).
Note that if $L_{N}=0$ for some $N$, then the right hand side of (\ref{eq: FirstLBdd}) is zero since $\Gamma_T^\lambda$ becomes polynomial in $\lambda$ in this case. Therefore, we will assume that $L_N>0$ for all $N$. 
It then follows from (\ref{eq: UniProj}) that 
\begin{align}
 & \|\Gamma_{T}^{\lambda}\|_{W\rightarrow W}\nonumber \\
 & \leqslant\sum_{n=0}^{N-1}(\lambda\|\Phi\|)^{n}\|X^{n}\|+\sum_{n=N}^{\infty}(\lambda\|\Phi\|)^{n}\|X^{n}\|\nonumber \\
 & \leqslant\sum_{n=0}^{N-1}(\lambda\|\Phi\|)^{n}\|X^{n}\|+\sum_{n=N}^{\infty}\frac{\left(\lambda^{p}\|\Phi\|^{p}L_{N}\right)^{n/p}}{(n/p)!}\nonumber \\
 & =\sum_{n=0}^{\infty}\frac{\left(\lambda^{p}\|\Phi\|^{p}L_{N}\right)^{n/p}}{(n/p)!}+\sum_{n=0}^{N-1}\left((\lambda\|\Phi\|)^{n}\|X^{n}\|-\frac{\left(\lambda^{p}\|\Phi\|^{p}L_{N}\right)^{n/p}}{(n/p)!}\right),\label{eq: EstGam}
\end{align}where for notational simplicity we have omitted the subscript for the operator norm of $\Phi$.

To understand the asymptotic behaviour of the right hand side as $\lambda\rightarrow\infty,$
we first consider the explicit function defined by
\[
f(x)\triangleq\sum_{n=0}^{\infty}\frac{x^{n/p}}{(n/p)!}.
\]
We claim that 
\begin{equation}
f(x)\leqslant(p+1)x{\rm e}^{x}\ \ \ {\rm for}\ {\rm all}\ x\geqslant1.\label{eq: SumGrowth}
\end{equation}
Indeed, for each $m\geqslant0,$ define $R_{m}$ to be the set of
non-negative real numbers $r<p$ such that $mp+r$ is an integer.
Then $R_{m}\subseteq[0,p)$ consists of no more than $p+1$ elements.
Therefore, 
\begin{align*}
f(x) & =\sum_{m=0}^{\infty}\sum_{r\in R_{m}}\frac{x^{m+r/p}}{(m+r/p)!}\leqslant\sum_{m=0}^{\infty}\frac{x^{m}}{m!}\sum_{r\in R_{m}}x^{r/p}\leqslant(p+1)x\sum_{m=0}^{\infty}\frac{x^{m}}{m!}=(p+1)x{\rm e}^{x}.
\end{align*}

By applying (\ref{eq: SumGrowth}) to the first term on the right
hand side of (\ref{eq: EstGam}) and denoting the second term as $q_{N}(\lambda),$
we obtain that 
\[
\|\Gamma_{T}^{\lambda}\|_{W\rightarrow W}\leqslant(p+1)\lambda^{p}\|\Phi\|^{p}L_{N}\exp\left(\lambda^{p}\|\Phi\|^{p}L_{N}\right)+q_{N}(\lambda).
\]
Note that $q_{N}(\lambda)$ has polynomial growth in $\lambda.$ Therefore,
by taking $\lambda\rightarrow\infty,$ we have 
\[
\limsup_{\lambda\rightarrow\infty}\frac{\log\|\Gamma_{T}^{\lambda}\|_{W\rightarrow W}}{\left(\lambda\|\Phi\|\right)^{p}}\leqslant L_{N}.
\]
Since $N$ is arbitrary, we conclude that 
\[
\limsup_{\lambda\rightarrow\infty}\frac{\log\|\Gamma_{T}^{\lambda}\|_{W\rightarrow W}}{\left(\lambda\|\Phi\|\right)^{p}}\leqslant\inf_{N\geqslant1}L_{N}=L_{p}({\bf X}).
\]
\end{proof}
At first glance, the estimate (\ref{eq: FirstLBdd}) does not seem
to be as useful as it will be. We now unwind the shape of its right
hand side in the context of pure rough paths. From now on, we confine
ourselves in finite dimensional developments, which is the main situation
where useful calculations can be done explicitly.
\begin{thm}
\label{thm: IntLBdd}Let ${\bf X}_{t}=\exp(tl)$ be a pure $m$-rough
path with $l\in{\cal L}^{(m)}(V).$ For any given finite dimensional development $\Phi:V\rightarrow{\rm End}(W),$ we have 
\begin{equation}
L_{m}({\bf X})\geqslant\frac{\sup\left\{ {\rm Re}(\mu):\mu\in\sigma(\Phi(l_{m}))\right\} }{\|\Phi\|_{V\rightarrow\mathrm{End}(W)}^{m}},\label{eq: IntLBdd}
\end{equation}
where $l_{m}\triangleq\pi_{m}(l)$ is the highest degree component
of $l,$ and $\sigma(\Phi(l_{m}))$ denotes the set of eigenvalues
of $\Phi(l_{m})\in{\rm End}(W)$.
\end{thm}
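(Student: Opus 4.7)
The plan is to apply Proposition \ref{prop: FirstLBdd} with $p=m$, compute the developed signature path $\Gamma_T^{\lambda}$ explicitly for a pure rough path, and then extract the $\lambda^m$ asymptotics of its operator norm via a standard finite-dimensional spectral perturbation argument.

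First, I would observe that the tensor algebra dilation $\delta_{\lambda}$ is an algebra endomorphism of $T((V))$ multiplying $V^{\otimes n}$ by $\lambda^{n}$. Writing $l=l_{1}+l_{2}+\cdots+l_{m}$ for the decomposition of $l$ into homogeneous components $l_{i}\in V^{\otimes i}\cap \mathcal{L}^{(m)}(V)$, the dilated path is itself a pure $m$-rough path:
\[
\delta_{\lambda}\mathbf{X}_{t}=\exp\!\Bigl(t\bigl(\lambda l_{1}+\lambda^{2}l_{2}+\cdots+\lambda^{m}l_{m}\bigr)\Bigr).
\]
By Proposition \ref{prop: SigPure}, its signature equals $\exp\bigl(\sum_{i=1}^{m}\lambda^{i}l_{i}\bigr)\in T((V))$. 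Since $\Phi$ extends to an algebra homomorphism on $T((V))$, formula (\ref{eq: FormDev}) gives
\[
\Gamma_{T}^{\lambda}=\Phi\bigl(S(\delta_{\lambda}\mathbf{X})\bigr)=\exp(A_{\lambda}),\qquad A_{\lambda}:=\sum_{i=1}^{m}\lambda^{i}\Phi(l_{i})\in{\rm End}(W).
\]

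Next I would invoke the elementary spectral-radius inequality $\|\exp(A)\|_{W\to W}\geqslant\rho(\exp(A))=\exp\bigl(\max_{\mu\in\sigma(A)}{\rm Re}(\mu)\bigr)$, a direct consequence of the spectral mapping theorem, to get
\[
\log\|\Gamma_{T}^{\lambda}\|_{W\to W}\geqslant\max_{\mu\in\sigma(A_{\lambda})}{\rm Re}(\mu).
\]
To identify the leading order in $\lambda$, I rescale: the operator
\[
B_{\lambda}:=\lambda^{-m}A_{\lambda}=\Phi(l_{m})+\sum_{i=1}^{m-1}\lambda^{-(m-i)}\Phi(l_{i})
\]
converges in operator norm to $\Phi(l_{m})$ as $\lambda\to\infty$. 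By continuity of the roots of a monic polynomial in its coefficients, applied to the characteristic polynomials of $B_{\lambda}$ and $\Phi(l_{m})$, for any $\mu^{*}\in\sigma(\Phi(l_{m}))$ attaining the maximum real part there exists a family $\mu_{\lambda}\in\sigma(B_{\lambda})$ with $\mu_{\lambda}\to\mu^{*}$. Since $\lambda^{m}\mu_{\lambda}\in\sigma(A_{\lambda})$, this yields $\log\|\Gamma_{T}^{\lambda}\|\geqslant\lambda^{m}{\rm Re}(\mu_{\lambda})$.

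Substituting into Proposition \ref{prop: FirstLBdd} and taking $\lambda\to\infty$,
\[
L_{m}(\mathbf{X})\geqslant\limsup_{\lambda\to\infty}\frac{\lambda^{m}{\rm Re}(\mu_{\lambda})}{\lambda^{m}\|\Phi\|_{V\to{\rm End}(W)}^{m}}=\frac{{\rm Re}(\mu^{*})}{\|\Phi\|_{V\to{\rm End}(W)}^{m}},
\]
which is precisely the estimate (\ref{eq: IntLBdd}). The only technical moving part is the spectral convergence of $B_{\lambda}$ to $\Phi(l_{m})$, but this is classical in finite dimensions. The genuine difficulty of the paper is not in establishing (\ref{eq: IntLBdd}) itself but in the subsequent task of \emph{designing} the development $\Phi$ so that the right-hand side of (\ref{eq: IntLBdd}) comes as close as possible to $\|\pi_{m}(l)\|$ — this is where the complex semisimple Lie algebra structure and Cartan subalgebras enter the picture.
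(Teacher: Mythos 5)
Your proposal is correct and follows essentially the same route as the paper: apply Proposition \ref{prop: FirstLBdd}, write $\Gamma_{T}^{\lambda}=\exp\bigl(\sum_{i}\lambda^{i}\Phi(l_{i})\bigr)$, lower-bound the operator norm of the exponential by the modulus of the exponential of an eigenvalue, and use finite-dimensional eigenvalue continuity for the rescaled family $\lambda^{-m}A_{\lambda}\to\Phi(l_{m})$ (the paper invokes Kato's perturbation theorems where you invoke continuity of the roots of the characteristic polynomial, which is the same fact in finite dimensions).
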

\begin{proof}
The proof is an application of perturbation theory in finite dimensions.
Let $\mu$ be an eigenvalue of $\Phi(l_{m})$ and write $l=l_{1}+\cdots+l_{m}$
as the sum of homogeneous components. According to \cite{Kato},
Chapter 2, Theorem 5.1 and Theorem 5.2 applied to the continuous family
\[
(0,\infty)\ni\lambda\mapsto T(\lambda)\triangleq\Phi(l_{m})+\frac{1}{\lambda}\Phi(l_{m-1})+\cdots+\frac{1}{\lambda^{m-1}}\Phi(l_{1})\in{\rm End}(W)
\]
of bounded linear transformations, we know that there exists a complex
valued continuous function $\phi(\lambda)$, such that $\phi(\lambda)$
is an eigenvalue of $T(\lambda)$ for all $\lambda$ and $\phi(\lambda)\rightarrow\mu$
as $\lambda\rightarrow\infty.$ On the other hand, let $(\Gamma_{t}^{\lambda})_{0\leqslant t\leqslant1}$
be the development of the dilated path $\delta_{\lambda}({\bf X}_{t})$
under $\Phi.$ By (\ref{eq: FormDev}) and the definition of operator
norm, we have 
\begin{align*}
\|\Gamma_{1}^{\lambda}\|_{W\rightarrow W} & =\|\Phi\left(\delta_{\lambda}(\exp(l))\right)\|_{W\rightarrow W}=\|\exp(\Phi(\delta_{\lambda}(l)))\|_{W\rightarrow W}\\
 &=\|\exp(\lambda^{m}T(\lambda))\|_{W\rightarrow W} \geqslant\left|\exp\left(\lambda^{m}\phi(\lambda)\right)\right|=\exp\left(\lambda^{m}{\rm Re}(\phi(\lambda))\right).
\end{align*}
Therefore, 
\[
\frac{\log\|\Gamma_{1}^{\lambda}\|_{W\rightarrow W}}{\lambda^{m}}\geqslant{\rm Re}(\phi(\lambda))
\]
for all $\lambda>0.$ Now the result follows from Proposition \ref{prop: FirstLBdd}
by taking $\lambda\rightarrow\infty.$
\end{proof}
\begin{rem}
Note that the right hand side of (\ref{eq: IntLBdd}) does not depend
on lower degree components of $l$. In other words, Theorem \ref{thm: IntLBdd}
provides a possible way of eliminating the complicated interactions
among different degree components of $l$ in the signature tail asymptotics. Nonetheless, 
it is \textit{not} true that this fact allows us to conclude Conjecture \ref{conj: LengConjPure} 
directly from the homogeneous case (i.e. when $l=l_m$) for which we know the result holds trivially (cf. (\ref{eq: HomCase}) in Section \ref{sec: SpecExam}). The subtle point is that, as suggested by (\ref{eq: IntLBdd}), the elimination of lower degree effects is only achieved through a given development $\Phi$. Therefore, even though we know the result holds for the homogeneous case, one needs to see that the lower bound can be attained at some specific choice of developments in order to conclude the result for the inhomogeneous case. Designing such developments is the main goal of what follows.

\end{rem}

\subsubsection{The main lower estimate}\label{sec: MainLower}

In view of Theorem \ref{thm: IntLBdd}, to obtain useful lower
estimates on $L_{m}({\bf X}),$ one needs to design suitable Lie algebraic
developments under which we can estimate eigenvalues of $l_{m}$ effectively.
This is where the intermediate Lie algebra $\mathfrak{g}$ and the
complex structure in Definition \ref{def: AlgDev} come into play.
In particular, we will choose $\mathfrak{g}$ to be a finite dimensional
complex semisimple Lie algebra and rely on the associated representation
theory.

\paragraph*{I. Notions from the representation theory of complex semisimple
Lie algebras}
\label{sec: LowerI}
\addcontentsline{toc}{paragraph}{\nameref{sec: LowerI}}

$\ $\\
\\
To explain how the semisimple structure plays a role, it is helpful
to first recall some relevant notions from Lie theory. We refer the
reader to \cite{Humphreys72} for more details. Unless otherwise
stated, all Lie algebras and representations are finite dimensional
over the complex field. The main benefit of this setting is the existence
of eigenvalues for linear transformations, which significantly simplifies the
associated representation theory.
\begin{defn}
A complex Lie algebra $\mathfrak{g}$ is called \textit{semisimple}
if it is isomorphic to a direct sum $\mathfrak{g}\cong\mathfrak{g}_{1}\oplus\cdots\oplus\mathfrak{g}_{r}$ of Lie algebras,
where each summand $\mathfrak{g}_{i}$ is \textit{simple} in the sense
that it does not contain non-trivial proper ideals.
\end{defn}
It can be shown that semisimpleness is equivalent to the non-degeneracy
of the \textit{Killing form}, which is the bilinear form $B:\mathfrak{g}\times\mathfrak{g}\rightarrow\mathbb{C}$
defined by 
\[
B(X,Y)\triangleq{\rm Tr}\left({\rm ad}(X)\circ{\rm ad}(Y)\right),
\]
where ${\rm Tr}$ means taking trace and ${\rm ad}:\mathfrak{g}\rightarrow{\rm End}(\mathfrak{g})$
denotes the \textit{adjoint representation} of $\mathfrak{g}$ given
by ${\rm ad}(X)(Z)\triangleq[X,Z].$

A central concept in semisimple Lie theory that is also crucial for
us is the following.
\begin{defn}
A \textit{Cartan subalgebra} of $\mathfrak{g}$ is a subspace $\mathfrak{h}\subseteq\mathfrak{g}$
such that:\\
\\
(i) $\mathfrak{h}$ is a maximal abelian subalgebra of $\mathfrak{g}$;\\
(ii) for each $H\in\mathfrak{h},$ the linear transformation ${\rm ad}(H)\in{\rm End}(\mathfrak{g})$
is semisimple (over $\mathbb{C}$ this is equivalent to being diagonalizable).
\end{defn}
For a complex semisimple Lie algebra, a Cartan subalgebra always exists
and it is unique up to conjugation in $\mathfrak{g}$. Let $\mathfrak{h}$ be a given Cartan subalgebra
of $\mathfrak{g}$. By its definition and a standard application of
linear algebra, given an arbitrary representation $\rho:\mathfrak{g}\rightarrow\mathrm{End}(W)$,
all elements of $\mathfrak{h}$ are simultaneously diagonalizable
when viewed as linear transformations over $W$ under $\rho.$ More
specifically, a complex linear functional $\mu\in\mathfrak{h}^{*}$
is called a \textit{weight} for the given representation $\rho$ if
the subspace
\begin{equation}\label{eq: WeightSpace}
W^{\mu}\triangleq\left\{ w\in W:\rho(H)w=\mu(H)w\ \mathrm{for\ all}\  H\in\mathfrak{h}\right\} 
\end{equation}
is non-trivial. It follows that there are at most finitely many weights for $\rho$.
 Denote their collection by $\Pi(\rho)$. The space
$W$ then admits the decomposition (simultaneous diagonalization) $W=\oplus_{\mu\in\Pi(\rho)}W^{\mu},$
in which for every $H\in\mathfrak{h}$, $W^{\mu}$ 
is an eigenspace of $\rho(H)$ with eigenvalue $\mu(H)$ ($\mu\in\Pi(\rho)$).

Indeed, much more can be said in the semisimple setting. We first
look at the adjoint representation of $\mathfrak{g}$. Given a complex
linear functional $\alpha\in\mathfrak{h}^{*},$ define the subspace
\[
\mathfrak{g}^{\alpha}\triangleq\left\{ X\in\mathfrak{g}:{\rm ad}(H)(X)=\alpha(H)X\ {\rm for}\ {\rm all}\ H\in\mathfrak{h}\right\}
\]in the same way as (\ref{eq: WeightSpace}).
It is easy to verify that $\mathfrak{g}^{0}=\mathfrak{h},$ and $[\mathfrak{g}^{\alpha},\mathfrak{g}^{\beta}]\subseteq\mathfrak{g}^{\alpha+\beta}$
for all $\alpha,\beta\in\mathfrak{h}^{*}.$ A complex linear functional
$\alpha\in\mathfrak{h}^{*}$ is called a \textit{root} of $\mathfrak{g}$
with respect to $\mathfrak{h}$ if it is a weight for the adjoint
representation, i.e. if $\mathfrak{g}^{\alpha}\neq\{0\}$. In this
case, $\mathfrak{g}^{\alpha}$ is called the\textit{ root space} associated
with the root $\alpha.$ As before, there are at most finitely many
roots. A basic result in semisimple Lie theory is the following so-called
\textit{root space decomposition}.
\begin{thm}
Let $\Delta\subseteq\mathfrak{h}^*$ be the set of nonzero roots with respect to a given
Cartan subalgebra $\mathfrak{h}$. Then $\mathfrak{g}$ can be written
as the direct sum 
\[
\mathfrak{g}=\mathfrak{h}+\sum_{\alpha\in\Delta}\mathfrak{g}^{\alpha}.
\]
In addition, $\dim\mathfrak{g}^{\alpha}=1$ for each $\alpha\in\Delta,$
and if $\alpha,\beta$ are two roots with $\alpha+\beta\in\Delta,$
then 
\begin{equation}
[\mathfrak{g}^{\alpha},\mathfrak{g}^{\beta}]=\left\{\begin{array}{ll}\mathfrak{g}^{\alpha+\beta}, \quad \text{if} \quad \alpha+\beta\in \Delta,\\
\{0\}, \quad \text{otherwise}.
\end{array}
\right.\label{eq: GradRoot}
\end{equation}
\end{thm}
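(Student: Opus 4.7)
The plan is to establish the three assertions in turn, relying on two standard tools of complex semisimple Lie theory: the non-degeneracy of the Killing form $B$ on $\mathfrak{g}$ (Cartan's criterion for semisimpleness), and the classification of finite-dimensional representations of $\mathfrak{sl}_2(\mathbb{C})$. Throughout I use that the operators $\{\mathrm{ad}(H):H\in\mathfrak{h}\}$ commute (as $\mathfrak{h}$ is abelian) and are individually diagonalizable (by definition of a Cartan subalgebra).

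For the direct sum decomposition, simultaneous diagonalization of this commuting family of diagonalizable endomorphisms of $\mathfrak{g}$ immediately gives $\mathfrak{g}=\bigoplus_{\alpha\in\mathfrak{h}^*}\mathfrak{g}^\alpha$, with only finitely many summands nontrivial. The remaining task is to identify $\mathfrak{g}^0$ with $\mathfrak{h}$. Since $\mathfrak{g}^0$ is the centralizer $C_{\mathfrak{g}}(\mathfrak{h})$, one needs $\mathfrak{h}$ to be self-centralizing; this is a standard consequence of the maximality of $\mathfrak{h}$ among abelian subalgebras together with non-degeneracy of $B|_{\mathfrak{h}\times\mathfrak{h}}$, via a Jordan-decomposition argument showing that any element of $C_{\mathfrak{g}}(\mathfrak{h})$ outside $\mathfrak{h}$ would enlarge $\mathfrak{h}$ to a bigger abelian $\mathrm{ad}$-semisimple subalgebra.

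For the one-dimensionality of $\mathfrak{g}^\alpha$, I would first use $\mathrm{ad}$-invariance of $B$ to show that $B(\mathfrak{g}^\alpha,\mathfrak{g}^\beta)=0$ unless $\alpha+\beta=0$, so non-degeneracy forces $-\alpha\in\Delta$ and yields a perfect pairing between $\mathfrak{g}^\alpha$ and $\mathfrak{g}^{-\alpha}$. Choosing nonzero $x_\alpha\in\mathfrak{g}^\alpha$, $y_\alpha\in\mathfrak{g}^{-\alpha}$, one shows $h_\alpha:=[x_\alpha,y_\alpha]\in\mathfrak{h}$ is nonzero and can be rescaled to give a standard $\mathfrak{sl}_2$-triple $(e_\alpha,f_\alpha,h_\alpha)$ with $\alpha(h_\alpha)=2$. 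Set $\mathfrak{s}_\alpha=\mathrm{span}(e_\alpha,f_\alpha,h_\alpha)$. Then $V:=\ker\alpha\oplus\mathbb{C}h_\alpha\oplus\bigoplus_{k\neq 0}\mathfrak{g}^{k\alpha}$ is a finite-dimensional $\mathfrak{s}_\alpha$-submodule of $\mathfrak{g}$ on which $h_\alpha$ has only even eigenvalues. Decomposing $V$ into $\mathfrak{sl}_2$-irreducibles and noting that $\ker\alpha$ is acted on trivially while $\mathbb{C}h_\alpha$ spans the nontrivial weight-zero part, a careful count of the weight-zero subspace forces the nontrivial component to be exactly one copy of the three-dimensional representation $V_2$. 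This yields $\dim\mathfrak{g}^{\pm\alpha}=1$ and $\mathfrak{g}^{k\alpha}=\{0\}$ for all $|k|\geq 2$.

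For the bracket identity, the inclusion $[\mathfrak{g}^\alpha,\mathfrak{g}^\beta]\subseteq\mathfrak{g}^{\alpha+\beta}$ follows from the Jacobi identity: for $H\in\mathfrak{h}$, $x\in\mathfrak{g}^\alpha$, $y\in\mathfrak{g}^\beta$, one has $[H,[x,y]]=(\alpha+\beta)(H)[x,y]$. When $\alpha+\beta\in\Delta$, the reverse inclusion again uses $\mathfrak{sl}_2$-theory applied to the $\alpha$-string $W:=\bigoplus_{k\in\mathbb{Z}}\mathfrak{g}^{\beta+k\alpha}$, a finite-dimensional $\mathfrak{s}_\alpha$-submodule whose $h_\alpha$-weights form an unbroken arithmetic progression and whose nonzero components are at most one-dimensional by step (ii). Hence $W$ is an irreducible $\mathfrak{sl}_2$-module, and $\mathrm{ad}(e_\alpha)$ is surjective between consecutive non-extremal weight spaces, which is exactly $[\mathfrak{g}^\alpha,\mathfrak{g}^\beta]=\mathfrak{g}^{\alpha+\beta}$. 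In the complementary case $\alpha+\beta\notin\Delta\cup\{0\}$, we have $\mathfrak{g}^{\alpha+\beta}=\{0\}$ and the claim is automatic. The main technical obstacle is the $\mathfrak{sl}_2$-module analysis in step (ii), particularly ruling out higher-dimensional irreducibles appearing in $V$; however all these arguments are entirely classical (cf.\ Humphreys, Chapter 8), and since the paper already cites this reference the theorem may reasonably be invoked rather than reproved in full.
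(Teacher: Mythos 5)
Your sketch is correct and is precisely the classical argument (simultaneous diagonalization plus self-centralizing Cartan subalgebra for the decomposition, Killing-form pairing and $\mathfrak{sl}_2$-module analysis for one-dimensionality, and irreducibility of the $\alpha$-string through $\beta$ for the bracket identity). The paper does not prove this theorem at all: it is stated as standard background with a citation to Humphreys, and your outline reproduces exactly the proof from that reference (Chapter 8), so invoking it, as you suggest at the end, is what the authors intend. One small point in your favour: you correctly restrict the ``otherwise'' case to $\alpha+\beta\notin\Delta\cup\{0\}$, since for $\beta=-\alpha$ one has $[\mathfrak{g}^{\alpha},\mathfrak{g}^{-\alpha}]=\mathbb{C}h_{\alpha}\neq\{0\}$; the paper's literal phrasing glosses over this, but it is never used in that degenerate form.
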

It is possible to study general representations using the structure
of roots. Before stating relevant results, we need a few more definitions. Let $E$ be
the vector space generated by $\Delta$ over $\mathbb{R}.$ A subset
$\Delta_{0}$ of $\Delta$ is called a \textit{base} if:\\
\\
(i) $\Delta_{0}$ is a basis of $E$;\\
(ii) each root $\beta\in\Delta$ can be expressed as $\beta=\sum_{\alpha\in\Delta_{0}}k_{\alpha}\alpha$
with integral coefficients $k_{\alpha}$ either being all non-negative
or all non-positive.\\
\\
The roots in $\Delta_{0}$ are called \textit{simple roots}. The choice
of $\Delta_{0}$ is not unique but its cardinality is. The Lie algebra
$\mathfrak{g}$ is said to have \textit{rank} $m$ if $\Delta_{0}$
has $m$ elements, which is equivalent to saying that $\dim_{\mathbb{C}}\mathfrak{h}=m.$
Let $\Delta_{0}=\{\alpha_{1},\cdots,\alpha_{m}\}$ be a given set
of simple roots. The Killing form $B$ restricted to $\mathfrak{h}$
is also non-degenerate. It follows that, for each $\alpha_{i}\in\Delta_{0},$
there exists $T_{i}\in\mathfrak{h}$ such that $\alpha_{i}(H)=B(T_{i},H)$
for all $H\in\mathfrak{h}.$ We define the normalized element $H_{i}\triangleq2T_{i}/B(T_{i},T_{i})$.
\begin{defn}
A linear functional $\lambda\in\mathfrak{h}^{*}$ is called a \textit{dominant
integral functional} if all $\lambda(H_{i})$ ($1\leqslant i\leqslant m$)
are non-negative integers. The set $\{\lambda_{1},\cdots,\lambda_{m}\}$
of \textit{fundamental dominant integral functionals} are defined
by the duality relation $\lambda_{i}(H_{j})=\delta_{ij}.$
\end{defn}
The main result in the representation
theory of complex semisimple Lie algebras is stated as follows. Recall that a representation
$\rho:\mathfrak{g}\rightarrow{\rm End}(W)$ is \textit{irreducible}
if $W$ does not contain non-trivial proper $\mathfrak{g}$-invariant
subspaces.
\begin{thm}
\label{thm: ClasRep}Let $\mathfrak{g}$ be a complex semisimple Lie
algebra with a given Cartan subalgebra $\mathfrak{h}$ and a given
base $\Delta_{0}$ of simple roots. There is a one-to-one correspondence
between dominant integral functionals and isomorphism classes of (finite
dimensional) irreducible representations.
\end{thm}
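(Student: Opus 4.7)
The plan is to prove the bijection in both directions: to each finite dimensional irreducible representation one assigns a ``highest weight'' that turns out to be dominant integral, and conversely from each dominant integral $\lambda\in\mathfrak{h}^{*}$ one constructs a unique (up to isomorphism) finite dimensional irreducible representation with that highest weight.

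First I would set up the partial order on $\mathfrak{h}^{*}$ induced by the base $\Delta_{0}$: call a root positive if its expansion in $\Delta_{0}$ has non-negative integer coefficients, and write $\mu\succeq\nu$ if $\mu-\nu$ is a non-negative integer combination of simple roots. Let $\mathfrak{n}^{+}\triangleq\sum_{\alpha\succ 0}\mathfrak{g}^{\alpha}$ and $\mathfrak{n}^{-}\triangleq\sum_{\alpha\succ 0}\mathfrak{g}^{-\alpha}$, so that by the root space decomposition $\mathfrak{g}=\mathfrak{n}^{-}\oplus\mathfrak{h}\oplus\mathfrak{n}^{+}$. For each simple root $\alpha_{i}$, pick generators $X_{i}\in\mathfrak{g}^{\alpha_{i}}$ and $Y_{i}\in\mathfrak{g}^{-\alpha_{i}}$ normalized so that $\{X_{i},Y_{i},H_{i}\}$ is an $\mathfrak{sl}_{2}$-triple. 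These triples will be the workhorses of the argument.

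Next, given a finite dimensional irreducible representation $\rho:\mathfrak{g}\rightarrow\mathrm{End}(W)$, I would use the simultaneous weight decomposition $W=\oplus_{\mu\in\Pi(\rho)}W^{\mu}$ and pick a weight $\lambda$ maximal under $\succeq$, which exists since $\Pi(\rho)$ is finite. Any nonzero $v\in W^{\lambda}$ is then a \emph{highest weight vector}: it is annihilated by every positive root space, since $\rho(\mathfrak{g}^{\alpha})W^{\lambda}\subseteq W^{\lambda+\alpha}=\{0\}$ for $\alpha\succ0$. Restricting $\rho$ to each $\mathfrak{sl}_{2}$-triple and invoking the classification of finite dimensional $\mathfrak{sl}_{2}$-representations forces $\lambda(H_{i})\in\mathbb{Z}_{\geqslant 0}$, so $\lambda$ is dominant integral. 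By irreducibility and the Poincar\'e--Birkhoff--Witt theorem applied to the decomposition $\mathfrak{g}=\mathfrak{n}^{-}\oplus\mathfrak{h}\oplus\mathfrak{n}^{+}$, the whole representation is generated by $v$ as $W=U(\mathfrak{n}^{-})v$; tracking weights then shows $\dim W^{\lambda}=1$. Uniqueness of the isomorphism class attached to $\lambda$ follows by a standard argument: if two irreducibles share the highest weight $\lambda$, their direct sum contains a cyclic highest weight submodule whose projections onto each factor are nonzero homomorphisms, and Schur's lemma combined with irreducibility identifies them.

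For existence, given a dominant integral $\lambda$, I would form the Verma module $M(\lambda)\triangleq U(\mathfrak{g})\otimes_{U(\mathfrak{b})}\mathbb{C}_{\lambda}$, where $\mathfrak{b}\triangleq\mathfrak{h}\oplus\mathfrak{n}^{+}$ and $\mathbb{C}_{\lambda}$ is the one dimensional $\mathfrak{b}$-module on which $\mathfrak{h}$ acts by $\lambda$ and $\mathfrak{n}^{+}$ acts trivially. Standard weight arguments show that $M(\lambda)$ has a unique maximal proper submodule, and the quotient $L(\lambda)$ is irreducible with highest weight $\lambda$. The genuine obstacle, which I expect to be the hardest step, is proving that $L(\lambda)$ is \emph{finite} dimensional whenever $\lambda$ is dominant integral. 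The strategy is to exhibit, for each simple root $\alpha_{i}$, the Serre-type relation $Y_{i}^{\lambda(H_{i})+1}v_{\lambda}=0$ inside $L(\lambda)$; this follows from the $\mathfrak{sl}_{2}$-representation theory applied to the triple $\{X_{i},Y_{i},H_{i}\}$ after checking that the left-hand side generates a proper submodule. These vanishing relations imply that the set of weights of $L(\lambda)$ is stable under the Weyl group reflections $s_{\alpha_{i}}$ and is bounded above by $\lambda$, hence is contained in the intersection of a Weyl-invariant set with a finite downward cone, which is finite. Since each weight space is also finite dimensional (by a PBW monomial count), $L(\lambda)$ itself is finite dimensional. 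Combining this construction with the uniqueness argument above yields the desired bijection between dominant integral functionals and isomorphism classes of finite dimensional irreducible representations.
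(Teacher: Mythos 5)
Your outline is correct: it is the standard proof of the theorem of the highest weight (maximal weight via the dominance order, $\mathfrak{sl}_2$-triples forcing $\lambda(H_i)\in\mathbb{Z}_{\geqslant 0}$, uniqueness via the cyclic highest weight submodule of a direct sum, and existence via the Verma module $M(\lambda)$ together with the relations $Y_i^{\lambda(H_i)+1}v_\lambda=0$ and Weyl-group stability of the weights to get finite dimensionality of $L(\lambda)$). The paper does not prove this statement at all --- it quotes it as a classical result and defers to \cite{Humphreys72} --- and your argument is precisely the proof given there, so there is nothing to compare beyond noting that your sketch matches the cited source.
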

We must point out that representation theory provides richer quantitative
information than the statement of the above classification theorem itself.
A main consequence of the theory which is relevant to us, is that
for each dominant integral functional $\lambda,$ the set of weights
for the associated irreducible representation can be described in
a rather quantitative way, making the computation of eigenvalues of
elements in $\mathfrak{h}$ quite tractable. We use an important example
to illustrate this point, in which all the aforementioned notions
and results can also be worked out explicitly. The implementation
of our main technique is largely based on this example.

Consider $\mathfrak{g}=\mathfrak{sl}(m,\mathbb{C})$ ($m\geqslant2$),
the set of $m\times m$ complex matrices with zero trace. Then $\mathfrak{g}$
is a complex semisimple (in fact, simple) Lie algebra of rank $m-1$.
A Cartan subalgebra $\mathfrak{h}$ can be chosen as the subspace
of diagonal matrices in $\mathfrak{g}$. For each $1\leqslant i\leqslant m,$
define $\mu_{i}\in\mathfrak{h}^{*}$ to be linear functional of taking
the $i$-th diagonal entry. Then the set of nonzero roots with respect
to $\mathfrak{h}$ is given by 
\[
\Delta=\left\{ \alpha_{i,j}\triangleq\mu_{i}-\mu_{j}:1\leqslant i\neq j\leqslant m\right\} .
\]
In addition, for each $i\neq j,$ the root space $\mathfrak{g}^{\alpha_{i,j}}=\mathbb{C}\cdot E_{i,j},$
where $E_{i,j}$ is the matrix whose $(i,j)$-entry is $1$ and all
other entries are $0$'s. To summarize, the root space decomposition
takes the form 
\[
\mathfrak{g}=\mathfrak{h}+\sum_{1\leqslant i\neq j\leqslant m}\mathbb{C}\cdot E_{i,j}.
\]

A base of simple roots can be chosen as
\begin{equation}
\Delta_{0}=\left\{ \alpha_{i}\triangleq\mu_{i}-\mu_{i+1}:1\leqslant i\leqslant m-1\right\} .\label{eq: BaseSL}
\end{equation}
For each simple root $\alpha_{i}\in\Delta_{0},$ the associated $H_{i}\in\mathfrak{h}$
is given by the diagonal matrix in which the $i$-th diagonal entry
is $1$, the $(i+1)$-th diagonal entry is $-1,$ and all other entries
are zero. To describe the corresponding classification theorem for
irreducible representations of $\mathfrak{sl}(m,\mathbb{C})$, we
first recall that, a given linear transformation $T$ over a vector
space $W$ induces for each $k\geqslant1,$ a natural linear transformation
$T^{\otimes k}$ on the tensor product $W^{\otimes k}$ satisfying
\[
T^{\otimes k}(w_{1}\otimes\cdots\otimes w_{k})=\sum_{i=1}^{k}w_{1}\otimes\cdots\otimes T(w_{i})\otimes\cdots\otimes w_{k},
\]
which also descends to a natural linear transformation $T^{\wedge k}$
on the $k$-th exterior power $\Lambda^k(W)$ of $W$. It follows that, a given
representation $\rho:\mathfrak{g}\rightarrow{\rm End}(W)$ of a Lie
algebra $\mathfrak{g}$ induces for each $k\geqslant1,$ a representation
$\rho^{\otimes k}$ (respectively, $\rho^{\wedge k}$) on $W^{\otimes k}$
(respectively, on $\Lambda^k(W)$) in the natural way. The representation
theory of $\mathfrak{g}=\mathfrak{sl}(m,\mathbb{C})$ can be summarized
as the following theorem. Note that $\mathfrak{g}$ acts on $W\triangleq\mathbb{C}^{m}$
in the canonical way by matrix multiplication. We call this canonical
matrix representation $\rho.$ For $1\leqslant k\leqslant m-1,$ denote
$W_{k}\triangleq \Lambda^k(W).$
\begin{thm}
\label{thm: RepSL}Let a Cartan subalgebra and a base of simple roots
for $\mathfrak{g}=\mathfrak{sl}(m,\mathbb{C})$ be given as above.\\
\\
(1) The set $\{\lambda_{1},\cdots,\lambda_{m-1}\}$ of fundamental
dominant integral functionals are given by $\lambda_{k}=\mu_{1}+\cdots+\mu_{k}$
($1\leqslant k\leqslant m-1$). For each $k$, the irreducible representation
associated with $\lambda_{k}$ is given by $\rho^{\wedge k}:\mathfrak{g}\rightarrow{\rm End}(W_{k}),$
whose set of weights is precisely
\[
\Pi(\lambda_{k})=\left\{ \mu_{i_{1}}+\cdots+\mu_{i_{k}}:1\leqslant i_{1}<\cdots<i_{k}\leqslant m\right\} .
\]
(2) For each dominant integral functional $\lambda=a_{1}\lambda_{1}+\cdots+a_{m-1}\lambda_{m-1}$
with $a_{i}$'s being non-negative integers, the representation $\rho^{\lambda}:\mathfrak{g}\rightarrow{\rm End}(W_{1}^{\otimes a_{1}}\otimes\cdots\otimes W_{m-1}^{\otimes a_{m-1}})$
contains exactly one copy of the irreducible representation associated
with $\lambda$, whose set of weights is a subset of 
\[
\left\{ \sum_{k=1}^{m-1}\sum_{j=1}^{a_{k}}\nu_{k,j}:\nu_{k,j}\in\Pi(\lambda_{k})\right\} .
\]
\end{thm}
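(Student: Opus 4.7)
The plan is to prove both parts by direct computation in the standard basis of $\mathfrak{sl}(m,\mathbb{C})$, calling on Theorem \ref{thm: ClasRep} once to identify $\Lambda^{k}(W)$ with the irreducible representation of highest weight $\lambda_k$, and on Weyl's complete reducibility theorem once to decompose the tensor product in part (2).

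For part (1), I would first verify that $\lambda_k=\mu_1+\cdots+\mu_k$ are the fundamental dominant integral functionals by evaluating on the coroots: since $H_j$ is the diagonal matrix with $+1$ in position $j$ and $-1$ in position $j+1$, one has $(\mu_1+\cdots+\mu_k)(H_j)=\mathbf{1}_{j\leqslant k}-\mathbf{1}_{j+1\leqslant k}=\delta_{jk}$. To compute the weights of $\rho^{\wedge k}$, I would apply the Leibniz-type formula for the induced action on $\Lambda^{k}(W)$ to the basis $e_{i_1}\wedge\cdots\wedge e_{i_k}$ ($i_1<\cdots<i_k$); for diagonal $H$ this yields the eigenvalue $\mu_{i_1}(H)+\cdots+\mu_{i_k}(H)$, giving exactly $\Pi(\lambda_k)$. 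Irreducibility would then follow by observing that the highest weight vector $e_1\wedge\cdots\wedge e_k$ generates the whole module under iterated applications of the root vectors $E_{i,j}$, which swap a wedge factor $e_j\mapsto e_i$ up to sign or annihilate the vector when $i$ already appears in the wedge. Theorem \ref{thm: ClasRep} then identifies $\Lambda^{k}(W)$ as the irreducible representation associated to $\lambda_k$.

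For part (2), the inclusion of the weight set in the stated collection of sums is immediate from the fact that $H\in\mathfrak{h}$ acts on a pure tensor of simultaneous eigenvectors as the sum of the individual eigenvalues. The substantive claim is the multiplicity-one statement. I would note that $\lambda=a_1\lambda_1+\cdots+a_{m-1}\lambda_{m-1}$ is itself a weight of $\rho^{\lambda}$, realized by tensoring together the highest weight vectors from each factor, and this realization is unique up to scalar. Indeed, any other element $\mu_{i_1}+\cdots+\mu_{i_k}\in\Pi(\lambda_k)$ with $i_s\geqslant s$ differs from $\lambda_k$ by $\sum_{s=1}^{k}(\alpha_s+\alpha_{s+1}+\cdots+\alpha_{i_s-1})$, a non-negative integral combination of the simple roots in the base (\ref{eq: BaseSL}), so $\lambda_k$ is strictly maximal in $\Pi(\lambda_k)$ under the dominance partial order. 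Consequently $\lambda$ is the strictly maximal weight of $\rho^{\lambda}$, with a one-dimensional weight space. Applying Weyl's complete reducibility theorem, $\rho^{\lambda}$ decomposes as a direct sum of irreducibles, each containing its own highest weight with multiplicity one; since $\lambda$ is itself the maximal weight occurring in $\rho^{\lambda}$, exactly one summand has highest weight $\lambda$.

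The routine parts are the eigenvalue computations on wedge and tensor basis vectors, together with the orbit argument for irreducibility. The main technical point requiring care is the dominance-order maximality of $\lambda_k$ in $\Pi(\lambda_k)$, which underpins both the identification of $\Lambda^{k}(W)$ with the correct irreducible in part (1) and the multiplicity-one conclusion in part (2); this is a short but essential verification using the explicit form of the simple roots $\alpha_j=\mu_j-\mu_{j+1}$.
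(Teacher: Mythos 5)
Your argument is correct. The paper does not actually prove Theorem \ref{thm: RepSL} --- it is stated as standard background from the representation theory of $\mathfrak{sl}(m,\mathbb{C})$, with a pointer to Humphreys --- and what you give is exactly the standard textbook proof: the coroot evaluation $\lambda_k(H_j)=\delta_{jk}$, the diagonal action on the wedge basis, dominance-maximality of $\lambda_k$ in $\Pi(\lambda_k)$, and the one-dimensionality of the $\lambda$-weight space of $\rho^{\lambda}$ combined with complete reducibility to get multiplicity one. The only step worth tightening is the irreducibility of $\Lambda^{k}(W)$: generating the module from $e_1\wedge\cdots\wedge e_k$ is only half the argument; you should also record that distinct $k$-element index sets give distinct functionals on $\mathfrak{h}$ (even modulo the relation $\mu_1+\cdots+\mu_m=0$), so every weight space is one-dimensional, whence any nonzero submodule contains some basis wedge and the transitive action of the $E_{i,j}$ on basis wedges (up to sign) carries it to the whole space.
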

\begin{rem}
In the second part of the above theorem, by using Schur polynomials and
Young tableaux, it is possible to identify  the precise copy of irreducible
representation contained in the tensor product representation as well
as the associated set of weights. However, at the moment we do not see
the need of pursuing this generality.
\end{rem}
To conclude this part, we mention as an example that the adjoint representation
of $\mathfrak{sl}(m,\mathbb{C})$ is the irreducible representation
associated with the dominant integral functional $\lambda_{1}+\lambda_{m-1}=\mu_{1}-\mu_{m}.$

\paragraph*{II. An essential step: developing highest degree Lie
component into a Cartan subalgebra}
\label{sec: LowerII}
\addcontentsline{toc}{paragraph}{\nameref{sec: LowerII}}

$\ $\\
\\
Returning to our signature problem, let $\mathbf{X}_t=\exp(tl)$ be
a pure $m$-rough path, where $l\in\mathcal{L}^{(m)}(V)$ with highest
degree component $l_{m}.$ An essential step in our approach, is to choose
$\mathfrak{g}$ to be a finite dimensional complex semisimple Lie
algebra in the Lie algebraic development, together with a linear embedding
$F:V\rightarrow\mathfrak{g}$ such that the space ${\cal L}_{m}(V)$
of homogeneous Lie polynomials of degree $m$ is mapped into a Cartan
subalgebra of $\mathfrak{g}$ under the induced homomorphism on the
free Lie algebra ${\cal L}(V)$. In this way, according to Theorem
\ref{thm: IntLBdd}, we are immediately led to the lower estimate
\begin{equation}\label{eq: Weight Estimate}
L_{m}({\bf X})\geqslant\frac{\sup\left\{ {\rm Re}(\mu(F(l_{m}))):\mu\in\Pi(\rho)\right\} }{\|\Phi\|_{V\rightarrow\mathrm{End}(W)}^{m}}
\end{equation}
under the given Lie algebraic development $\Phi=\rho\circ F$, where recall
that $\Pi(\rho)\subseteq\mathfrak{h}^*$ is the set of weights for the representation $\rho$.
Representation theory then provides tractable methods
of computing weights for given representations, hence leading us
to more explicit lower bounds on $L_{m}({\bf X})$. 

The simplest way of mapping ${\cal L}_{m}(V)$ into a Cartan subalgebra
is through the Lie algebra $\mathfrak{sl}(m,\mathbb{C}),$ which can
be seen by straight forward matrix calculation. However, the essential
reason behind such calculation is hidden in the root pattern as stated
in the following lemma. Working with root patterns also allows one
to identify other semisimple Lie algebras which are not isomorphic
to $\mathfrak{sl}(m;\mathbb{C})$ but achieve the same property (cf.
Example \ref{exa: SO} and Example \ref{exa: G2} below).
\begin{lem}
\label{lem: IntoCartan}Suppose that there exist $m-1$ nonzero roots
$\alpha_{1},\cdots,\alpha_{m-1}$ with respect to $\mathfrak{h},$
such that all nonzero roots one can construct from them as integral
linear combinations are precisely of the form $\pm(\alpha_{i}+\alpha_{i+1}+\cdots+\alpha_{j})$
with $1\leqslant i\leqslant j\leqslant m-1.$ Define the subspace
\begin{equation}
E\triangleq\mathfrak{g}^{\alpha_{1}}\oplus\cdots\oplus\mathfrak{g}^{\alpha_{m-1}}\oplus\mathfrak{g}^{-(\alpha_{1}+\cdots+\alpha_{m-1})}.\label{eq: ESpace}
\end{equation}
Then 
\[
E^{(m-1)}\triangleq\underbrace{[\cdots[[[E,E],E],E]\cdots]}_{m-1\ {\rm brackets}}\subseteq\mathfrak{h}.
\]
\end{lem}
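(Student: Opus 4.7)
My approach is to track, for any monomial bracket $[\cdots[[X_{\beta_{1}},X_{\beta_{2}}],X_{\beta_{3}}],\cdots,X_{\beta_{m}}]$ that spans $E^{(m-1)}$, the ``total weight'' $\beta_{1}+\cdots+\beta_{m}\in\mathfrak{h}^{*}$, and to show that this weight is forced to vanish (placing the bracket in $\mathfrak{g}^{0}=\mathfrak{h}$) unless the bracket itself vanishes for root-combinatorial reasons.

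First I will reduce the problem to such monomial brackets. Since each of the root spaces $\mathfrak{g}^{\alpha_{i}}$ ($1\leqslant i\leqslant m-1$) and $\mathfrak{g}^{-(\alpha_{1}+\cdots+\alpha_{m-1})}$ is one-dimensional (by the root-space decomposition theorem quoted above), $E^{(m-1)}$ is spanned by left-nested brackets of $m$ root vectors, each chosen from the set $S\triangleq\{\alpha_{1},\ldots,\alpha_{m-1},\alpha_{m}\}$ where $\alpha_{m}\triangleq-(\alpha_{1}+\cdots+\alpha_{m-1})$. The standard inclusion $[\mathfrak{g}^{\gamma},\mathfrak{g}^{\delta}]\subseteq\mathfrak{g}^{\gamma+\delta}$, with the convention that $\mathfrak{g}^{\eta}=0$ when $\eta$ is neither zero nor a root, then places any such monomial bracket inside $\mathfrak{g}^{\beta_{1}+\cdots+\beta_{m}}$.

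The heart of the argument is then the following dichotomy. Let $n_{i}$ denote the multiplicity of $\alpha_{i}$ in the list $(\beta_{1},\ldots,\beta_{m})$, so $n_{i}\geqslant0$ and $\sum_{i=1}^{m}n_{i}=m$. Using $\alpha_{m}=-\alpha_{1}-\cdots-\alpha_{m-1}$, the total weight equals $\sum_{i=1}^{m-1}(n_{i}-n_{m})\alpha_{i}$. If this vanishes the bracket already lies in $\mathfrak{h}$ and we are done; otherwise, by the hypothesis, the only way for the bracket not to automatically vanish is for the total weight to coincide with some $\pm(\alpha_{j}+\alpha_{j+1}+\cdots+\alpha_{k})$ for $1\leqslant j\leqslant k\leqslant m-1$. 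Taking $\alpha_{1},\ldots,\alpha_{m-1}$ to be linearly independent (which is natural and holds in the canonical example $\mathfrak{g}=\mathfrak{sl}(m,\mathbb{C})$ and is needed for the enumeration of roots to be unambiguous), matching coefficients forces $n_{i}-n_{m}=\pm1$ for $i\in[j,k]$ and $n_{i}-n_{m}=0$ otherwise.

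Either alternative leads to a contradiction with the constraints $n_{i}\in\mathbb{Z}_{\geqslant0}$ and $\sum_{i=1}^{m}n_{i}=m$: a one-line arithmetic check shows $mn_{m}=m\mp(k-j+1)$, hence $n_{m}=1\mp(k-j+1)/m$, which cannot be a nonnegative integer because $1\leqslant k-j+1\leqslant m-1$. Thus the nonzero-weight case is simply impossible, and every monomial bracket of depth $m-1$ in $E$ lies in $\mathfrak{h}$. The only real obstacle is this counting incompatibility, which is clean once the setup is in place; the rest of the argument is root-system bookkeeping and a direct application of the grading formula (\ref{eq: GradRoot}).
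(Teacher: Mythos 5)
Your argument is a genuinely different organization of the proof from the paper's, and in the case where $\alpha_{1},\cdots,\alpha_{m-1}$ are linearly independent it is correct and arguably cleaner. The paper computes each iterated bracket $E^{(k)}$ explicitly as a sum of root spaces indexed by consecutive sums $\pm\alpha_{i;j}$ and only passes to $\mathfrak{h}$ at the final step; you instead skip all intermediate bookkeeping, observe that any depth-$(m-1)$ monomial bracket lands in $\mathfrak{g}^{\beta_{1}+\cdots+\beta_{m}}$, and rule out a nonzero root as total weight by the multiplicity count $mn_{m}=m\mp(k-j+1)$. That arithmetic is right, and it covers the application that drives Theorem \ref{thm: MainLBdd}, where the lemma is invoked blockwise for $\mathfrak{sl}(m,\mathbb{C})$ with the simple roots of (\ref{eq: BaseSL}), which are indeed independent.

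The gap is that linear independence is not a hypothesis of the lemma, and it fails in the paper's other uses of it: in Examples \ref{exa: SO} and \ref{exa: G2} (and in the degree-$4$ computation with $\mathfrak{so}(5,\mathbb{C})$ that yields $c(4,2)\geqslant 5/32$), one takes $m-1=3$ or $5$ roots inside a rank-two root system, so the $\alpha_{i}$ are necessarily dependent (indeed repeated). In that situation your coefficient-matching step --- ``$\sum_{i}(n_{i}-n_{m})\alpha_{i}=\pm(\alpha_{j}+\cdots+\alpha_{k})$ forces $n_{i}-n_{m}=\pm1$ on $[j,k]$ and $0$ elsewhere'' --- is not valid: a weight with the ``wrong'' coefficient vector can still coincide, as an element of $\mathfrak{h}^{*}$, with a consecutive sum and hence be a root. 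For instance, with $(\alpha_{1},\alpha_{2},\alpha_{3})=(\alpha,\beta,\beta)$ in $B_{2}$ one has $\alpha_{1}+\alpha_{3}=\alpha_{1}+\alpha_{2}$, so non-consecutive combinations can be roots, and one must check by hand (as the root diagram shows) that the specific total weights arising from admissible multiplicity vectors, such as $2\alpha+2\beta$ or $\beta-\alpha$, are not roots. To cover the lemma's full intended scope you would need either to add this verification for the dependent configurations, or to restrict your statement to the independent case and note that the rank-two examples require a separate (finite) check. To be fair, the paper's own step-by-step computation leans on the same kind of root-combinatorial facts (e.g.\ that $\alpha_{i}-\alpha_{j}$ is never a nonzero root), so the issue is one of stated generality rather than of your method being unsound.
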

\begin{proof}
For each $1\leqslant i\leqslant m-1$ and $1\leqslant j\leqslant m-i,$
define 
\[
\alpha_{i;j}\triangleq\alpha_{i}+\alpha_{i+1}+\cdots+\alpha_{i+j-1}.
\]
According to the assumption, the $\alpha_{i;j}$'s are precisely all
the nonzero roots one can build from $\alpha_{1},\cdots,\alpha_{m-1}$
as integral linear combinations. It follows from the graded property
(\ref{eq: GradRoot}) of root spaces that 
\begin{align*}
E^{(1)} & =\left(\sum_{i=1}^{m-2}\mathfrak{g}^{\alpha_{i;2}}\right)\oplus\mathfrak{g}^{-\alpha_{1;m-2}}\oplus\mathfrak{g}^{-\alpha_{2;m-2}},\\
 & \cdots\\
E^{(k)} & =\left(\sum_{i=1}^{m-1-k}\mathfrak{g}^{\alpha_{i;k+1}}\right)\oplus\left(\sum_{j=1}^{k+1}\mathfrak{g}^{-\alpha_{j;m-1-k}}\right),\\
 & \cdots\\
E^{(m-2)} & =\mathfrak{g}^{\alpha_{1}+\cdots+\alpha_{m-1}}\oplus\left(\sum_{j=1}^{m-1}\mathfrak{g}^{-\alpha_{j}}\right).
\end{align*}
Finally, by using property (\ref{eq: GradRoot}) again as well as the assumption of the lemma, we obtain 
\[
E^{(m-1)}=[E^{(m-2)},E]\subseteq\mathfrak{\mathfrak{g}}^{0}=\mathfrak{h}.
\]
\end{proof}
Lemma \ref{lem: IntoCartan} tells us that, if we design $F:V\rightarrow\mathfrak{g}$
so that $F(V)\subseteq E$, then under the induced homomorphism on
the free Lie algebra, ${\cal L}_{m}(V)$ is mapped into the Cartan
subalgebra $\mathfrak{h}$.
\begin{example}
\label{exa: DevSL}Consider $\mathfrak{g}=\mathfrak{sl}(m,\mathbb{C}),$
with a Cartan subalgebra $\mathfrak{h}$ given by the subspace of
diagonal matrices in $\mathfrak{g}$. In this case, it is easy to
see that the simple roots $\alpha_{i}=\mu_{i}-\mu_{i+1}$ ($1\leqslant i\leqslant m-1$)
given by (\ref{eq: BaseSL}) satisfy the assumption of
Lemma \ref{lem: IntoCartan}. In this case, we have
\[
E=\left\{ \left(\begin{array}{cccc}
0 & z_{1} &  & 0\\
 &  & \ddots\\
 &  &  & z_{m-1}\\
z_{m} &  &  & 0
\end{array}\right):\ z_{1},\cdots,z_{m}\in\mathbb{C}\right\} ,
\]
where omitted entries in the matrix are all $0$'s. Indeed, the semisimple
Lie algebra associated with the root system generated by the roots given in Lemma \ref{lem: IntoCartan}
is isomorphic to $\mathfrak{sl}(m,\mathbb{C}).$
\end{example}
Using root patterns, we give two other examples that are not isomorphic
to $\mathfrak{sl}(m,\mathbb{C})$ but also allow one to map highest
degree Lie polynomials into a Cartan subalgebra. In each example,
the underlying Lie algebra is of rank two. The nonzero roots are drawn
as planar Euclidean vectors, in which integral linear combinations
follow usual vector operation rules. The corresponding conclusion is immediate by
manipulating the root vectors based on the graded property (\ref{eq: GradRoot})
and $\mathfrak{g}^{0}=\mathfrak{h}.$ Although possible, there is no need to work with
the actual Lie algebra $\mathfrak{g}$ and the associated root spaces at this level.
\begin{figure}\label{fig: B2G2} 
\begin{center} 
\includegraphics[scale=0.35]{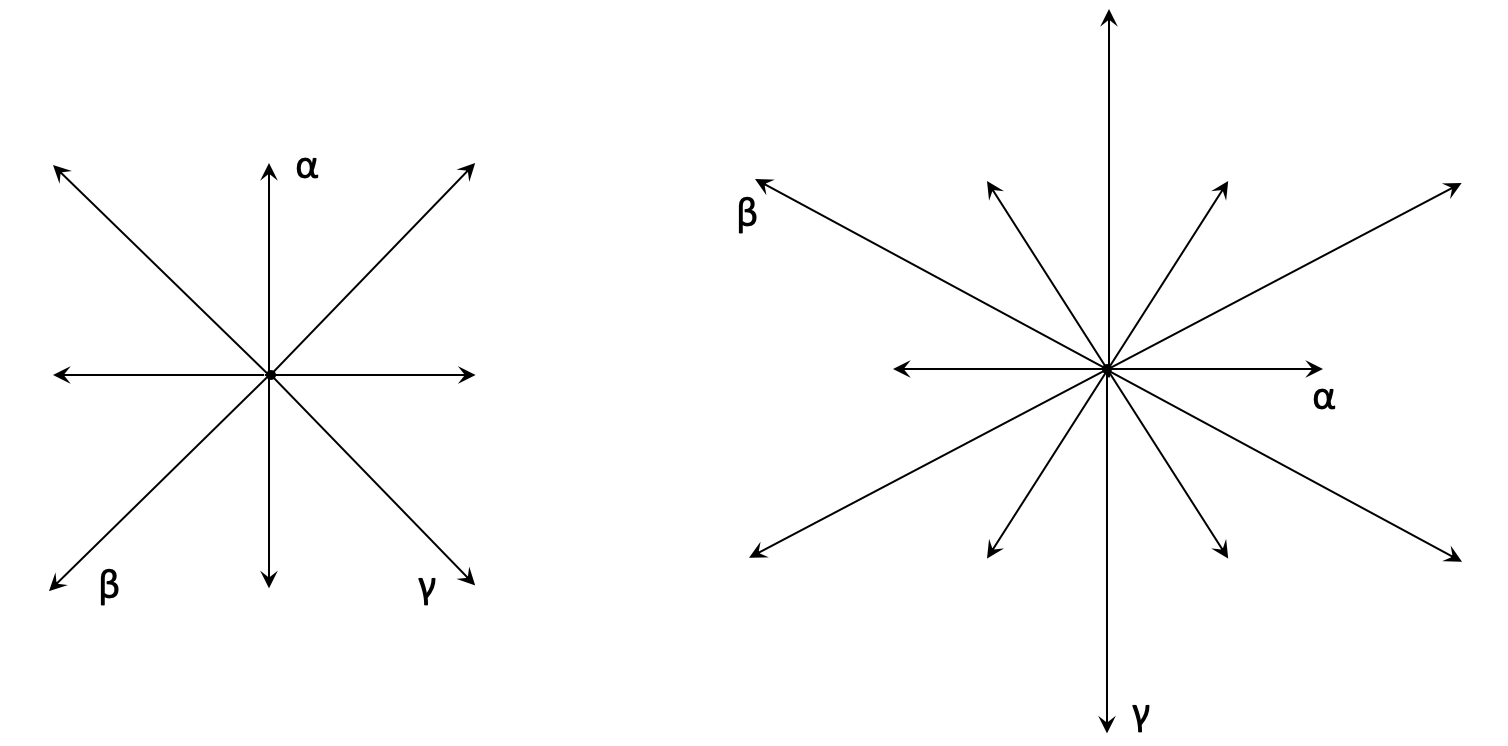}
\protect\caption{Root Systems of $\mathfrak{so}(5,\mathbb{C})$ and $\mathfrak{g}_2$}
\end{center}
\end{figure} 
\begin{example}
\label{exa: SO}Consider $\mathfrak{g}=\mathfrak{so}(5,\mathbb{C}),$
the Lie algebra of $5\times5$ complex skew-symmetric matrices. The
associated root system is given by the left hand side of Figure 4.1.
If we require $F:V\rightarrow E\triangleq\mathfrak{g}^{\alpha}\oplus\mathfrak{g}^{\beta}\oplus\mathfrak{g}^{\gamma}$,
then ${\cal L}_{4}(V)$ is mapped into a Cartan subalgebra (cf. Section
\ref{subsec: LowDeg} II below for more explicit calculations in degree
$m=4$ based on this structure). The property can be generalized to
higher degrees by considering $\mathfrak{so}(n,\mathbb{C})$ with
larger $n$.
\end{example}
\begin{example}
\label{exa: G2}Consider $\mathfrak{g}=\mathfrak{g}_{2},$ the smallest
exceptional simple Lie algebra. It arises from the classification
of simple Lie algebras, and can be identified as the Lie algebra of
the subgroup of ${\rm Spin}(7)$ preserving a point on $S^{7}$. The
associated root system is given by the right hand side of Figure 4.1.
If we require $F:V\rightarrow E\triangleq\mathfrak{g}^{\alpha}\oplus\mathfrak{g}^{\beta}\oplus\mathfrak{g}^{\gamma}$,
then ${\cal L}_{6}(V)$ is mapped into a Cartan subalgebra.
\end{example}
%

\paragraph*{III. A consistency lemma for certain symmetric polynomial systems}
\label{sec: LowerIII}
\addcontentsline{toc}{paragraph}{\nameref{sec: LowerIII}}

$\ $\\
\\
Note that the homogeneous Lie polynomial $l_{m}\in{\cal L}_{m}(V)$
has the general form $l_{m}=c_{1}h_{1}+\cdots+c_{\nu}h_{\nu},$ where
$\{h_{1},\cdots,h_{\nu}\}$ is a given basis of ${\cal L}_{m}(V)$.
In order to produce a lower estimate on $L_{m}({\bf X})$ in the form
of Theorem \ref{thm:MainThm}, with a factor \textit{independent of
the coefficients $c_{i}$'}s, a natural idea is to require each $h_{i}$
to have the right eigenvalue individually. In this way, properties of Cartan subalgebra will guarantee that $l_m$ has a desired eigenvalue $\|l_m\|$ and the operator
norm of the Lie algebraic development will depend only on the roughness
$m$ but not on the coefficients $c_{i}$. This viewpoint leads us
to the consideration of certain type of polynomial systems. 
A consistency lemma for these systems, stated as follows, will be needed for the proof of our main lower estimate.
The lemma may also be of independent interest.
\begin{lem}
\label{lem: Solvability}Let $p_{1},\cdots,p_{\nu}$ be homogeneous
polynomials over $\mathbb{C}^{n}$ of the same degree. Suppose that
they are linearly independent. Then there exists $k\geqslant1,$ such
that for any $c_{1},\cdots,c_{\nu}\in\mathbb{C},$ the polynomial system
\[
\begin{cases}
p_{1}({\bf z}_{1})+\cdots+p_{1}({\bf z}_{k})=c_{1},\\
\cdots\\
p_{\nu}({\bf z}_{1})+\cdots+p_{\nu}({\bf z}_{k})=c_{\nu}
\end{cases}
\]
has at least one solution in $\mathbb{C}^{kn}$, where ${\bf z}_{1},\cdots,{\bf z}_{k}$
are independent variables each having dimension $n$.
\end{lem}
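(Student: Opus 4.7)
The plan is to interpret the system as the surjectivity problem for the evaluation map
\[
\Psi_k:\mathbb{C}^{kn}\to\mathbb{C}^\nu,\quad \Psi_k(\mathbf{z}_1,\dots,\mathbf{z}_k)\triangleq\sum_{j=1}^k\Psi_1(\mathbf{z}_j),
\]
where $\Psi_1(\mathbf{z})\triangleq(p_1(\mathbf{z}),\dots,p_\nu(\mathbf{z}))$, and to show that $\Psi_k$ hits every point of $\mathbb{C}^\nu$ as soon as $k\geqslant\nu$.

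The first step is to transport the hypothesis of linear independence into a geometric spanning property: the image $\Psi_1(\mathbb{C}^n)\subseteq\mathbb{C}^\nu$ spans $\mathbb{C}^\nu$ as a complex vector space. Indeed, if $\Psi_1(\mathbb{C}^n)$ lay in a proper linear subspace, there would be a nonzero $(a_1,\dots,a_\nu)\in\mathbb{C}^\nu$ with $\sum_i a_i p_i(\mathbf{z})=0$ for every $\mathbf{z}$, contradicting the linear independence of $p_1,\dots,p_\nu$. Hence one can choose $\mathbf{w}_1,\dots,\mathbf{w}_\nu\in\mathbb{C}^n$ such that $\Psi_1(\mathbf{w}_1),\dots,\Psi_1(\mathbf{w}_\nu)$ form a basis of $\mathbb{C}^\nu$.

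The second step uses homogeneity together with the algebraic closedness of $\mathbb{C}$ to convert linear combinations into genuine values of $\Psi_k$. Let $d$ be the common degree of the $p_i$'s; then $\Psi_1(\lambda\mathbf{z})=\lambda^d\Psi_1(\mathbf{z})$ for every $\lambda\in\mathbb{C}$. Since $\mathbb{C}$ is algebraically closed, the map $\lambda\mapsto\lambda^d$ is onto, so every scalar can be realised as $\lambda^d$. Given an arbitrary target $(c_1,\dots,c_\nu)\in\mathbb{C}^\nu$, expand it in the chosen basis as $\sum_{j=1}^\nu a_j\Psi_1(\mathbf{w}_j)$, pick $\lambda_j\in\mathbb{C}$ with $\lambda_j^d=a_j$, and set $\mathbf{z}_j\triangleq\lambda_j\mathbf{w}_j$. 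Then
\[
\Psi_\nu(\mathbf{z}_1,\dots,\mathbf{z}_\nu)=\sum_{j=1}^\nu\lambda_j^d\Psi_1(\mathbf{w}_j)=\sum_{j=1}^\nu a_j\Psi_1(\mathbf{w}_j)=(c_1,\dots,c_\nu),
\]
so that $k=\nu$ (and hence any $k\geqslant\nu$, by taking the extra $\mathbf{z}_j$'s to be zero) works.

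There is no substantive obstacle here beyond the two observations above. I would flag only that the result genuinely requires working over $\mathbb{C}$: the extraction of $d$-th roots is indispensable, and the statement can fail over $\mathbb{R}$ when $d$ is even (for instance, with $p_1(z)=z^2$ on $\mathbb{R}$ one cannot attain negative values no matter how many summands are allowed). This aligns perfectly with the main strategy of the paper, where the semisimple representation theory is set up over $\mathbb{C}$ precisely so that eigenvalue/weight computations behave well.
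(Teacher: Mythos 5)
Your proof is correct, and it takes a genuinely different and substantially simpler route than the paper's. The paper proceeds by induction on $\nu$: assuming surjectivity for every subfamily of $\nu$ of the polynomials, it shows that for each $i$ the auxiliary system (with the $i$-th sum nonzero and all other sums zero, over $4k$ blocks of variables) must be consistent, by a contradiction argument in which one constructs a function $F$ satisfying $P_{1}=F(P_{2},\cdots,P_{\nu+1})$, proves $F$ is homogeneous and additive (using the inductive surjectivity and extraction of $m$-th roots of $-1$ and of $\lambda$), hence linear, contradicting linear independence; concatenating the $\nu+1$ cases gives the inductive step and the explicit value $k=4^{\nu-1}\nu!$ recorded in Remark \ref{rem: ExplicitK}. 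You bypass the induction entirely: linear independence forces the image of $\mathbf{z}\mapsto(p_{1}(\mathbf{z}),\cdots,p_{\nu}(\mathbf{z}))$ to span $\mathbb{C}^{\nu}$ (a polynomial vanishing identically on $\mathbb{C}^{n}$ is the zero polynomial), while homogeneity together with the existence of $d$-th roots in $\mathbb{C}$ makes this image a cone stable under arbitrary complex scaling, so $\nu$ summands built on a fixed basis $\Psi_{1}(\mathbf{w}_{1}),\cdots,\Psi_{1}(\mathbf{w}_{\nu})$ already reach every target, uniformly in $(c_{1},\cdots,c_{\nu})$. This yields $k=\nu$ instead of $4^{\nu-1}\nu!$, which is strictly stronger than the lemma as stated and would even reduce the number of variables entering Vorob'ev's estimate in the proof of Theorem \ref{thm: ExpC}, since that argument only needs consistency of the normalized systems (\ref{eq: NmdSyst}) together with an a priori bound on some solution. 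Both arguments rest on exactly the same two structural features, homogeneity and algebraic closedness, and your remark that the statement fails over $\mathbb{R}$ in even degree matches the paper's reliance on the complex setting. The one implicit assumption, shared with the paper's own proof, is that the common degree is at least one (needed both for $p_{i}(0)=0$ when padding with zero vectors and for the scaling to act nontrivially); this is automatic in the application, where the degree equals the roughness $m\geqslant1$.
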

\begin{rem}
Lemma \ref{lem: Solvability} is not as obvious as one may expect
and the special structure of the system has to play an essential role.
In general, a polynomial system in which the number of variables is greater than the number of equations may not always possess a solution, even when
assuming that the underlying polynomials are algebraically independent.
For instance, the system 
\[
x^{2}y=0,\ xyz=1
\]
does not have a solution! It is to some extent surprising that linear
independence is sufficient for the assertion to hold. 
\end{rem}
\begin{proof}[Proof of Lemma \ref{lem: Solvability}]\footnote{From the algebraic geometric viewpoint, it is not obvious how one can approach by using a general dimension argument, since in the associated projective space one needs to rule out the possibility that the underlying projective variety lies in the hyperplane at infinity. The proof we give here is entirely elementary.}
We treat the assertion as a property depending on $\nu$ (the number
of polynomials involved) and prove it by induction. When $\nu=1$, since
$p_{1}\neq0$ we know that $p_{1}({\bf z})\neq0$ for some ${\bf z}\in\mathbb{C}^{n}.$
Since $p_{1}$ is homogeneous, it follows from scaling that the image
of $p_{1}$ must be $\mathbb{C}.$ Therefore, the assertion holds
with $k=1$.

Suppose that the assertion is true for $\nu$ polynomials, and assume
that we are now given $\nu+1$ linearly independent homogeneous polynomials
$p_{1},\cdots,p_{\nu+1}$ of the same degree. By induction hypothesis,
there exists $k\geqslant1$, such that for any $1\leqslant i\leqslant \nu+1$,
the map ${\bf p}_{\hat{i}}^{(k)}:\mathbb{C}^{kn}\rightarrow\mathbb{C}^{\nu}$
defined by
\[
{\bf p}_{\hat{i}}^{(k)}({\bf z}_{1},\cdots,{\bf z}_{k})\triangleq\left(\begin{array}{c}
p_{1}({\bf z}_{1})+\cdots+p_{1}({\bf z}_{k})\\
\vdots\\
p_{i-1}({\bf z}_{1})+\cdots+p_{i-1}({\bf z}_{k})\\
p_{i+1}({\bf z}_{1})+\cdots+p_{i+1}({\bf z}_{k})\\
\vdots\\
p_{\nu+1}({\bf z}_{1})+\cdots+p_{\nu+1}({\bf z}_{k})
\end{array}\right)
\]
is surjective. We claim that, for every $1\leqslant i\leqslant \nu+1$,
the following system 
\begin{equation}
\begin{cases}
p_{i}({\bf z}_{1})+\cdots+p_{i}({\bf z}_{4k})\neq0,\\
p_{j}({\bf z}_{1})+\cdots+p_{j}({\bf z}_{4k})=0, & {\rm for\ all}\ j\neq i,
\end{cases}\label{eq: system with i-th equation nonzero}
\end{equation}
must have a solution. Observe that if this is true, then the induction
step finishes with $k'=4k(\nu+1).$ Indeed, let $c_{1},\cdots,c_{\nu+1}\in\mathbb{C}.$  For each $i$,
by homogenuity and scaling, the consistency of the
system (\ref{eq: system with i-th equation nonzero}) implies the
consistency of the system
\[
\begin{cases}
p_{1}({\bf z}_{1})+\cdots+p_{1}({\bf z}_{4k})=0,\\
\cdots\\
p_{i}({\bf z}_{1})+\cdots+p_{i}({\bf z}_{4k})=c_{i},\\
\cdots\\
p_{\nu+1}({\bf z}_{1})+\cdots+p_{\nu+1}({\bf z}_{4k})=0.
\end{cases}
\]
Let ${\bf Z}^{(i)}\in\mathbb{C}^{4kn}$ be a solution to the above
system. By adding up the $\nu+1$ cases, we know that the system 
\[
\begin{cases}
p_{1}({\bf z}_{1})+\cdots+p_{1}({\bf z}_{4k(\nu+1)})=c_{1},\\
\cdots\\
p_{\nu+1}({\bf z}_{1})+\cdots+p_{\nu+1}({\bf z}_{4k(\nu+1)})=c_{\nu+1},
\end{cases}
\]
has a solution given by ${\bf Z}=({\bf Z}^{(1)},\cdots,\mathbf{Z}^{(\nu+1)})\in\mathbb{C}^{4kn(\nu+1)}.$
In other words, the assertion holds with $k'=4k(\nu+1).$

Now it remains to show the consistency of the system (\ref{eq: system with i-th equation nonzero}).
Suppose on the contrary that the system is inconsistent for some $i$.
Without loss of generality, we may assume that $i=1$. We first introduce
some notation to simplify the presentation. It is convenient to call
\[
{\bf Z}=({\bf z}_{1},\cdots,{\bf z}_{k}),\ {\bf Z}'=({\bf z}_{k+1},\cdots,{\bf z}_{2k})
\]
and 
\[
{\bf W}=({\bf z}_{2k+1},\cdots,{\bf z}_{3k}),\ {\bf W}'=({\bf z}_{3k+1},\cdots,{\bf z}_{4k}).
\]
We also define
\[
P_{i}({\bf Z})=p_{i}({\bf z}_{1})+\cdots+p_{i}({\bf z}_{k})
\]
and similarly for the other parts of the variables. In particular,
we have
\[
p_{i}({\bf z}_{1})+\cdots+p_{i}({\bf z}_{4k})=P_{i}({\bf Z})+P_{i}({\bf Z}')+P_{i}({\bf W})+P_{i}({\bf W}').
\]
Under the above notation and assumption, we know that $P_{1}({\bf Z})+P_{1}({\bf Z}')+P_{1}({\bf W})+P_{1}({\bf W}')$
vanishes identically on the algebraic variety
\[
{\cal V}\triangleq\left\{ ({\bf Z},{\bf Z}',{\bf W},{\bf W}'):P_{i}({\bf Z})+P_{i}({\bf Z}')+P_{i}({\bf W})+P_{i}({\bf W}')=0\ {\rm for}\ 2\leqslant i\leqslant \nu+1\right\} 
\]
defined by the remaining polynomials.

We claim that, there exists a function $F:\mathbb{C}^{\nu}\rightarrow\mathbb{C},$
such that 
\begin{align}\label{eq: FRelation}
P_{1}({\bf W})+P_{1}({\bf W}') & =F\left(P_{2}({\bf W})+P_{2}({\bf W}'),\cdots,P_{\nu+1}({\bf W})+P_{\nu+1}({\bf W}')\right)
\end{align}
for all $({\bf W},{\bf W}')\in\mathbb{C}^{2kn}.$ Indeed, define $\Xi:\mathbb C^{2kn}\rightarrow \mathbb C^{\nu}$ by 
\begin{equation*}\Xi({\bf W},{\bf W}')\triangleq(P_2({\bf W})+P_2({\bf W}'),\dots,P_{\nu+1}({\bf W})+P_{\nu+1}({\bf W}')).\end{equation*} 
By the induction hypothesis, we know that $\Xi$ is surjective. We then define $F:\mathbb C^{\nu}\rightarrow \mathbb C$ by 
\begin{equation*}
F(\xi)\triangleq P_{1}({\bf W})+P_{1}({\bf W}'),
\end{equation*}where $(\mathbf{W},\mathbf{W}')$ is any element such that $\xi=\Xi(\mathbf{W},\mathbf{W}')$.
To verify
that $F$ is well defined, suppose that $\xi=\Xi({\bf W},{\bf W}')=\Xi(\tilde{\bf W},\tilde{\bf W}').$  Then
\[
P_{j}({\bf W})+P_{j}({\bf W}')=P_{j}(\tilde{{\bf W}})+P_{j}(\tilde{{\bf W}}'),\ \ \ {\rm for\ all}\ 2\leqslant j\leqslant \nu+1.
\]
Let 
\[
({\bf Z},{\bf Z}')\triangleq(-1)^{1/m}\cdot({\bf W},{\bf W}'),
\]
where $m$ is the degree of the underlying polynomials. It follows
that both of $({\bf Z},{\bf Z}',{\bf W},{\bf W}')$ and $({\bf Z},{\bf Z}',\tilde{{\bf W}},\tilde{{\bf W}}')$
are elements in ${\cal V},$ and thus they are both zeros of the polynomial
at $i=1$. In particular, we have 
\[
P_{1}({\bf W})+P_{1}({\bf W}')=P_{1}(\tilde{{\bf W}})+P_{1}(\tilde{{\bf W}}'),
\]
showing that $F$ is well defined.

By taking ${\bf W}'=0$ in (\ref{eq: FRelation}), we arrive at 
\[
P_{1}({\bf W})=F(P_{2}({\bf W}),\cdots P_{\nu+1}({\bf W})).
\]
Now the key observation is that, $F$ must be linear. Indeed, given
$\lambda\in\mathbb{C},$ we have 
\begin{align*}
 & \lambda F\left(P_{2}({\bf W}),\cdots,P_{\nu+1}({\bf W})\right)\\
 & =\lambda P_{1}({\bf W})\\
 & =P_{1}(\lambda^{1/m}{\bf W})\\
 & =F\left(P_{2}(\lambda^{1/m}{\bf W}),\cdots,P_{\nu+1}(\lambda^{1/m}{\bf W})\right)\\
 & =F\left(\lambda P_{2}({\bf W}),\cdots,\lambda P_{2}({\bf W})\right).
\end{align*}
In addition, let $\xi,\eta\in\mathbb{C}^{\nu}.$ Again by induction hypothesis,
there exist ${\bf W}$ and ${\bf W}'$ in $\mathbb{C}^{kn}$, such
that 
\[
\xi=\left(P_{2}({\bf W}),\cdots,P_{\nu+1}({\bf W})\right),\ \eta=\left(P_{2}({\bf W}'),\cdots,P_{\nu+1}({\bf W}')\right).
\]
 It follows that
\begin{align*}
F(\xi+\eta) & =F\left(P_{2}({\bf W})+P_{2}({\bf W}'),\cdots,P_{\nu+1}({\bf W})+P_{\nu+1}({\bf W}')\right)\\
 & =P_{1}({\bf W})+P_{1}({\bf W}')\\
 & =F(\xi)+F(\eta).
\end{align*}
Therefore, $F$ is linear. This leads to a contradiction with the
linear independence among $P_{1},\cdots,P_{\nu+1}$. Consequently, the
system (\ref{eq: system with i-th equation nonzero}) is consistent,
finishing the proof of the induction step.
\end{proof}
\begin{rem}
\label{rem: ExplicitK}It is seen from the inductive argument in the proof that one can take
$k=4^{\nu-1}\nu!$ in the lemma. This observation will be used to produce
an explicit estimate on the factor $c(m,d)$ arising from the main
lower bound (cf. Theorem \ref{thm: ExpC} below).
\end{rem}
%

\paragraph*{IV. Establishing the main lower estimate}
\label{sec: LowerIV}
\addcontentsline{toc}{paragraph}{\nameref{sec: LowerIV}}

$\ $\\
\\
Using the representation theory of $\mathfrak{sl}(n;\mathbb{C})$
and Lemma \ref{lem: Solvability}, we can now establish our main lower
estimate for the signature tail asymptotics of pure rough paths. The
result contains two parts, a general lower estimate involving a multiplicative
factor $c(m,d)$, and an explicit estimate on this factor. We state
and prove them separately.

First of all, our general lower estimate is stated as follows. The
proof is based on designing appropriate Lie algebraic developments.
\begin{thm}
\label{thm: MainLBdd}Let $V$ be a $d$-dimensional Banach
space and let every tensor product $V^{\otimes n}$ be equipped with the associated projective tensor norm. For each $m\geqslant1,$ there exists a constant $c(m,d)\in(0,1]$
depending only on $m$ and $d$, such that 
\[
L_{m}({\bf X})\geqslant c(m,d)\|\pi_{m}(l)\|
\]
for all pure $m$-rough paths ${\bf X}_{t}=\exp(tl)\in G^{(m)}(V)$ over $V.$
\end{thm}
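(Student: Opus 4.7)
The plan is to invoke Theorem \ref{thm: IntLBdd} and reduce the statement to a uniform construction of Lie algebraic developments. Namely, it suffices to produce, for every unit-norm $l_m \in \mathcal{L}_m(V)$, a finite dimensional development $\Phi:V\rightarrow\mathrm{End}(W)$ with $\|\Phi\|_{V\rightarrow\mathrm{End}(W)} \leqslant \Lambda$ (a constant depending only on $m,d$) under which $\Phi(l_m)$ possesses an eigenvalue whose real part is at least a constant $\kappa(m,d)>0$. One may then set $c(m,d) \triangleq \kappa(m,d)\Lambda^{-m}$.

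For the intermediate semisimple Lie algebra I take $\mathfrak{g}=\mathfrak{sl}(m,\mathbb{C})$ with Cartan subalgebra $\mathfrak{h}$ of traceless diagonal matrices and canonical matrix representation $\rho$ on $W=\mathbb{C}^m$. With the subspace $E=\mathrm{span}_{\mathbb{C}}\{E_{1,2},\ldots,E_{m-1,m},E_{m,1}\}$ from Example \ref{exa: DevSL} and Lemma \ref{lem: IntoCartan}, any linear map $F:V\rightarrow E$ sends $\mathcal{L}_m(V)$ into $\mathfrak{h}$; since the weights of $\rho$ are precisely the diagonal-coordinate functionals $\mu_1,\ldots,\mu_m$ (Theorem \ref{thm: RepSL}), the eigenvalues of $\Phi(l_m) = \rho(F(l_m))$ are exactly the diagonal entries of $F(l_m)$. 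To accommodate Lie polynomials with arbitrary coefficients I actually work in the direct sum $\mathfrak{g}^{\oplus k}$ together with its block-diagonal representation on $W^{\oplus k}$ for an integer $k$ provided below.

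Fix a basis $h_1,\ldots,h_\nu$ of $\mathcal{L}_m(V)$ with $\nu=\nu_{m,d}$ and define $P_i(F)\triangleq (F(h_i))_{11}$; each $P_i$ is a homogeneous polynomial of degree $m$ in the complex entries of $F$. The first key verification is the linear independence of $P_1,\ldots,P_\nu$ as polynomials on the space of such $F$, which I would establish by exhibiting, for each basis element $h_{i_0}$, an explicit test map whose associated root-monomial realizes $h_{i_0}$ while annihilating the remaining brackets via the graded structure (\ref{eq: GradRoot}). With linear independence, Lemma \ref{lem: Solvability} yields an integer $k\leqslant 4^{\nu-1}\nu!$ (see Remark \ref{rem: ExplicitK}) such that the system $P_i(F)=a_i$, $1\leqslant i\leqslant\nu$, is solvable for every prescribed target $(a_1,\ldots,a_\nu)\in\mathbb{C}^\nu$. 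Given $l_m=\sum c_i h_i$ with $\|l_m\|_{\mathrm{proj}}=1$, the dual characterization (\ref{eq: DualProj}) of the projective norm produces a real multilinear form $\phi$ of unit dual norm with $\phi(l_m)=1$; setting $a_i\triangleq\phi(h_i)$ gives $\sum c_i a_i = 1$, and the corresponding $F$ ensures $(F(l_m))_{11}=1$, so that $\Phi(l_m)$ has the real eigenvalue $1$.

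The principal obstacle, and the source of the constant $c(m,d)$, is controlling the operator norm $\|\Phi\| = \sup_{\|v\|_V=1}\|\rho(F(v))\|_{W\rightarrow W}$ uniformly in the target vector $(a_1,\ldots,a_\nu)$. Exploiting the homogeneity $P_i(tF)=t^m P_i(F)$, a compactness argument on the unit sphere in $\mathbb{C}^\nu$ combined with the surjectivity of the polynomial map $F\mapsto(P_1(F),\ldots,P_\nu(F))$ extracts a constant $\Lambda_d$ such that, for every target, one can select a solution with $\|F\|_{V\rightarrow\mathfrak{g}^{\oplus k}}\leqslant\Lambda_d\,\max_i|a_i|^{1/m}$. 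Since $|a_i|\leqslant\|h_i\|_{\mathrm{proj}}$ is bounded in terms of $m$ and $d$ only, the resulting $\|\Phi\|$ is bounded by a constant depending solely on $d$. Combining the eigenvalue bound with this norm bound via Theorem \ref{thm: IntLBdd} yields the stated inequality, while quantifying the compactness step against the explicit $k\leqslant 4^{\nu-1}\nu!$ should recover the explicit factor $c(m,d)\geqslant\Lambda_d^{-m}\cdot 2^{-(\nu_{m,d}!)^{\gamma\nu_{m,d}}}$ of Theorem \ref{thm:MainThm}. A subtle point I anticipate is ensuring that when $V$ is real the solution can be chosen so that the distinguished diagonal entry is real (rather than merely complex with positive real part); this should follow by replacing $F$ with an average of $F$ and its complex conjugate development, which preserves both the eigenvalue and the norm bound.
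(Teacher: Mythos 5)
There is a genuine gap at the heart of your construction: the choice of the \emph{direct sum} representation. You develop into $\mathfrak{g}^{\oplus k}$ acting block-diagonally on $W^{\oplus k}$, but the eigenvalues of a block-diagonal operator are the union of the eigenvalues of the individual blocks, not sums across blocks. Concretely, if the $j$-th block of $F$ is determined by variables $\mathbf{z}_j$, then the spectrum of $\Phi(l_m)$ consists of the diagonal entries $\sum_i c_i\,p_i(\mathbf{z}_j)$ for each fixed $j$, whereas Lemma \ref{lem: Solvability} only lets you prescribe the \emph{sums} $\sum_{j=1}^{k}p_i(\mathbf{z}_j)=a_i$. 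Prescribing the value of a single block, i.e.\ solving $p_i(\mathbf{z})=a_i$ for all $i$ with $k=1$, is exactly what cannot be guaranteed (it is the reason the lemma needs $k$ summands in the first place), and with only the summed system solved, the individual block eigenvalues can have small or negative real parts even though their total is $1$. So your development does not produce the eigenvalue $\phi(l_m)=1$ and the application of Theorem \ref{thm: IntLBdd} breaks down. The repair is the point of the paper's construction: place the $k$ blocks inside $\mathfrak{g}=\mathfrak{sl}(km,\mathbb{C})$ and take $\rho$ to be the $k$-th fundamental (exterior power) representation on $\Lambda^k(\mathbb{C}^{km})$, whose weight $\mu=\mu_1+\mu_{m+1}+\cdots+\mu_{(k-1)m+1}$ (Theorem \ref{thm: RepSL}) is precisely the sum of the first diagonal entries of all blocks; equivalently one could use a tensor-product (rather than direct-sum) representation of the $k$ copies, since weights add on tensor products. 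This is what converts the eigenvalue condition into the summed polynomial system that Lemma \ref{lem: Solvability} solves.

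Two further points. First, your uniform norm control via ``a compactness argument on the unit sphere'' is not justified as stated: the map sending a target $(a_1,\dots,a_\nu)$ to a minimal-norm solution need not be continuous, and the minimal solution norm, though finite at every target, need not be bounded on the sphere without additional argument. The paper sidesteps this by solving, once and for all, the $\nu$ normalized systems with targets the standard unit vectors (solutions $\mathbf{Z}^{(i)}$), then scaling the $i$-th solution by $B(h_i)^{1/m}$ and \emph{concatenating} the blocks (enlarging $k$ to $k\nu$); note this scaling-and-concatenation trick again relies on the additive-across-blocks weight, so it too requires the exterior-power (or tensor-product) representation. Second, your proposed proof of linear independence of the $p_i$ by exhibiting, for each Hall element, a specialization annihilating all the others is essentially asking to solve the single-block system with target a unit vector, which is stronger than what you need and not obviously available; the paper's argument is simpler: the substitution $T(\mathrm{e}_{i_1}\otimes\cdots\otimes\mathrm{e}_{i_m})=w_{i_1}\cdots w_{i_m}$ sends distinct basis tensors of $V^{\otimes m}$ to distinct monomials, hence is injective on $V^{\otimes m}$, and linear independence of $p_i=T(h_i)$ follows from that of the $h_i$. (Your final concern about realness of the eigenvalue is unnecessary: Theorem \ref{thm: IntLBdd} only uses the real part, and the target values $B(h_i)$ are already real.)
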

\begin{proof}
We write the highest degree component of $l$ in the form $l_{m}=c_{1}h_{1}+\cdots+c_{\nu}h_{\nu}$,
where $\{h_{1},\cdots,h_{\nu}\}$ is a given basis of ${\cal L}_{m}(V)$.
Using the dual characterization (\ref{eq: DualProj}) of the projective tensor norm, let $B$ be a given $m$-linear
functional over $V$ whose norm is bounded by $1$. We aim at constructing
a Lie algebraic development $\Phi:V\rightarrow\mathfrak{g}\rightarrow{\rm End}(W)$
such that:\\
\\
(i) $\mathfrak{g}$ is semisimple, and the space $\mathcal{L}_m(V)$ is mapped into a Cartan subalgebra $\mathfrak{h}$ of $\mathfrak{g}$ under the Lie homomorphism induced by $F$;\\
(ii) there exists a weight $\mu\in\mathfrak{h}^*$ for $\rho$ such that $\mu(F(l_m))=B(l_m)$;\\
(iii) the operator norm of $\Phi$ is bounded above by a constant which
is independent of $B$ and the specific values of the coefficients
$c_{i}$.\\
\\
If this can be achieved, the general lower estimate will follow from
(\ref{eq: Weight Estimate}) and (\ref{eq: DualProj}) since $B$ is
arbitrary.

One way of constructing such a development is the following. For simplicity
we assume that $\dim V=2$ with a given basis $\{{\rm e}_{1},{\rm e}_{2}\}$
(there is only notational difference in higher dimensions). We choose
$\mathfrak{g}=\mathfrak{sl}(k\cdot m,\mathbb{C})$ where $k\geqslant1$
is a large number to be specified later on. We choose a Cartan subalgebra
$\mathfrak{h}$ and a base of simple roots according to the discussion in Part I of
the current section. Define the embedding $F:V\rightarrow\mathfrak{g}$
in the following block diagonal form
\[
F({\rm e}_{1})=\left(\begin{array}{cccc}
A_{1} &  &  & 0\\
 & A_{2}\\
 &  & \ddots\\
0 &  &  & A_{k}
\end{array}\right)_{km\times km},\ F({\rm e}_{2})=\left(\begin{array}{cccc}
B_{1} &  &  & 0\\
 & B_{2}\\
 &  & \ddots\\
0 &  &  & B_{k}
\end{array}\right)_{km\times km},
\]
where each $A_{i},B_{j}\in\mathfrak{sl}(m,\mathbb{C})$ ($1\leqslant i,j\leqslant k$)
has the form 
\[
A_{i}=\left(\begin{array}{cccc}
0 & a_{i,1} &  & 0\\
 &  & \ddots\\
 &  &  & a_{i,m-1}\\
a_{i,m} &  &  & 0
\end{array}\right)_{m\times m},\ B_{j}=\left(\begin{array}{cccc}
0 & b_{j,1} &  & 0\\
 &  & \ddots\\
 &  &  & b_{j,m-1}\\
b_{j,m} &  &  & 0
\end{array}\right)_{m\times m},
\]
with all the $a_{i,r},b_{j,s}$'s being complex parameters to be specified
later on. There are totally $2km$ independent variables to determine
$F$. According to Lemma \ref{lem: IntoCartan} and Example \ref{exa: DevSL},
under the induced homomorphism (still denoted as $F$) on the free
Lie algebra, ${\cal L}_{m}(V)$ is mapped into the given Cartan subalgebra
$\mathfrak{h}$.

Finally, according to Theorem \ref{thm: ClasRep}, we choose $\rho:\mathfrak{g}\rightarrow{\rm End}(W)$
to be the irreducible representation of $\mathfrak{g}$ associated
with the $k$-th fundamental dominant integral functional $\lambda_{k}$,
and more explicitly by Theorem \ref{thm: RepSL} in the $\mathfrak{sl}(n,\mathbb{C})$
case, we have $W=\Lambda^k(\mathbb{C}^{km})$ and $\rho$ being
the $k$-th exterior power of the canonical matrix representation.
According to the same theorem, a weight for this representation is
given by 
\[
\mu=\mu_{1}+\mu_{m+1}+\mu_{2m+1}+\cdots+\mu_{(k-1)m+1}\in\mathfrak{h}^{*},
\]
where recall that $\mu_{i}$ is the linear functional of taking the
$i$-th diagonal entry.

To specify the parameters in order to fulfil the eigenvalue condition
(ii) while respecting the uniformity condition (iii), we are led to
setting up a system of equations:
\[
\mu(F(h_{i}))=B(h_{i}),\ 1\leqslant i\leqslant\nu.
\]
This is a polynomial system with $\nu$ equations and $2km$ independent
complex variables. It has the form 
\begin{equation}
\begin{cases}
p_{1}(A_{1},B_{1})+\cdots+p_{1}(A_{k},B_{k})=B(h_{1}),\\
\cdots\\
p_{\nu}(A_{1},B_{1})+\cdots+p_{\nu}(A_{k},B_{k})=B(h_{\nu}),
\end{cases}\label{eq: MainSyst}
\end{equation}
where each $p_{i}$ is a homogeneous polynomial of degree $m$ in
$2m$ complex variables. More precisely, $p_{i}(A,B)$ is the first
entry of the diagonal polynomial matrix $G(h_{i}),$ where $G$ is
the homomorphism induced from the linear map $V\rightarrow\mathfrak{sl}(m,\mathbb{C}[a_{i},b_{j}])$
given by 
\[
{\rm e}_{1}\mapsto A\triangleq\left(\begin{array}{cccc}
0 & a_{1} &  & 0\\
 &  & \ddots\\
 &  &  & a_{m-1}\\
a_{m} &  &  & 0
\end{array}\right),\ {\rm e}_{2}\mapsto B\triangleq\left(\begin{array}{cccc}
0 & b_{1} &  & 0\\
 &  & \ddots\\
 &  &  & b_{m-1}\\
b_{m} &  &  & 0
\end{array}\right).
\]
It is important to view $G$ as a homomorphism into the polynomial
matrix algebra in $2m$ complex variables.

We claim that, the polynomial system (\ref{eq: MainSyst}) has a solution
in $\mathbb{C}^{2km}$ for some large $k\geqslant1$, which according
to Lemma \ref{lem: Solvability}, boils down to showing that the polynomials
$p_{1},\cdots,p_{\nu}\in\mathbb{C}[a_{i},b_{j}]$ are linearly independent.
To this end, consider the linear map $T:V^{\otimes m}\rightarrow\mathbb{C}[a_{i},b_{j}]$
defined by
\[
T({\rm e}_{i_{1}}\otimes\cdots\otimes{\rm e}_{i_{m}})\triangleq(1,1)\text{-entry}{\rm \ of}\ G({\rm e}_{i_{1}}\otimes\cdots\otimes{\rm e}_{i_{m}}).
\]
Explicit calculation then shows that 
\begin{equation}
T({\rm e}_{i_{1}}\otimes\cdots\otimes{\rm e}_{i_{m}})=w_{i_{1}}\cdots w_{i_{m}},\label{eq: Monomial}
\end{equation}
where $w_{i_{j}}=a_{j}$ or $b_{j}$ according to whether $i_{j}=1$
or $2$. In particular, we see that $T$ is injective. Since $h_{1},\cdots,h_{\nu}$
is a basis of ${\cal L}_{m}(V)\subseteq V^{\otimes m},$ we conclude
that the polynomials
\[
p_{i}(A,B)=T(h_{i}),\ 1\leqslant i\leqslant\nu
\]
are linearly independent. Therefore, by Lemma \ref{lem: Solvability}, the polynomial system
(\ref{eq: MainSyst}) has a solution for some large $k$. Any solution
can be used to determine the Lie algebraic development $\Phi=\rho\circ F$
specified in the previously given form. Under such development, it
follows from Theorem \ref{thm: IntLBdd} that 
\[
L_{m}({\bf X})\geqslant\frac{B(l_{m})}{\|\Phi\|_{{V\rightarrow\mathrm{End}(W)}}^{m}}.
\]

Now it remains to estimate the operator norm of $\Phi$, which reduces to
estimating a solution to the polynomial system (\ref{eq: MainSyst}).
For this purpose, according to Lemma \ref{lem: Solvability}, there
exists $k\geqslant1$, such that for each $1\leqslant i\leqslant\nu$,
the polynomial system
\begin{equation}
\begin{cases}
p_{i}(A_{1},B_{1})+\cdots+p_{i}(A_{k},B_{k})=1,\\
p_{j}(A_{1},B_{1})+\cdots+p_{j}(A_{k},B_{k})=0, & j\neq i,
\end{cases}\label{eq: NmdSyst}
\end{equation}
has a solution ${\bf Z}^{(i)}\in\mathbb{C}^{2km}.$ It follows that
with $\tilde{{\bf Z}}^{(i)}\triangleq B(h_{i})^{1/m}{\bf Z}^{(i)},$
the vector $\tilde{{\bf Z}}\triangleq(\tilde{{\bf Z}}^{(1)},\cdots,\tilde{{\bf Z}}^{(\nu)})\in\mathbb{C}^{2k\nu m}$
is a solution to the system (\ref{eq: MainSyst}) with $k$ being
enlarged to $k\nu.$ Since $\|B\|\leqslant1,$ we see that $\tilde{{\bf Z}}$,
and thus the operator norm of $\Phi$, is bounded above by a constant depending
only on the roughness $m$ and the dimension $d$. Since $B$ is arbitrary, this implies the desired
lower estimate with a multiplicative factor $c(m,d)$ depending only on $m$ and $d$.
\end{proof}
It is clear from the last paragraph of the previous proof that, the
key to estimating the multiplicative factor
$c(m,d)$ is an explicit estimate on a solution to the system (\ref{eq: MainSyst}).
In general, selecting solutions to a consistent polynomial system
with a priori bounds is an important topic in computational algebraic
geometry that has been studied by many authors. We state a result
of Vorob'ev \cite{Vorobev86} that is relevant to us. Recall that
the \textit{bitsize} of a nonzero integer $n$ is the unique natural
number $\tau$ such that $2^{\tau-1}\leqslant |n|<2^{\tau}.$ The bitsize
of a rational number is the sum of the bitsizes of its numerator and
denominator.

\begin{lem}[cf. \cite{Vorobev86}, Theorem 3]\label{lem: Vorobev}

Let ${\cal V}$ be the set of real solutions to a consistent system
of polynomial equations $f_{1}=\cdots=f_{\nu}=0$ where each $f_{i}\in\mathbb{Q}[x_{1},\cdots,x_{n}].$
Let $L$ be the maximum of the bitsizes of the coefficients of the
system, $D\triangleq\sum_{i=1}^{\nu}\deg f_{i}$ and $r\triangleq\left(\begin{array}{c}
n+2D\\
n
\end{array}\right)$. Then there exists a point $x=(x_{1},\cdots,x_{n})\in{\cal V},$
such that 
\[
|x_{i}|\leqslant2^{H(r,L)}\ \ \ {\rm for\ all}\ 1\leqslant i\leqslant n,
\]
where $H$ is some universal bivariate polynomial independent of the
original system.

\end{lem}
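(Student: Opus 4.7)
The plan is to follow the classical framework of effective real elimination theory: reduce the problem to finding the roots of a single univariate polynomial with integer coefficients of controlled bitsize, and then apply Cauchy's root bound. I would organize this in three steps.

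First, I would reduce to a zero-dimensional setting. If the real variety ${\cal V}$ is positive-dimensional, intersect it with $n - \dim {\cal V}$ rational hyperplanes whose defining coefficients have bitsize comparable to $L$. Bezout's theorem together with a counting argument over a finite family of candidate hyperplanes (of bounded bitsize) produces at least one choice that preserves non-emptiness of the real intersection while cutting the complex variety down to dimension zero. This step only degrades the parameters $L$, $D$, $r$ by universal polynomial factors, so it is enough to handle the zero-dimensional case.

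Next, for a zero-dimensional system I would produce, for each coordinate $x_i$, a univariate polynomial $R_i(t)\in\mathbb{Z}[t]$ of degree bounded by Bezout (roughly $D^n$) such that every occurrence of $x_i$ as a coordinate of a complex solution is a root of $R_i$. The natural way to do this is via Kronecker's trick or $u$-resultants built from Macaulay-type matrices of size at most $r\times r$; here $r=\binom{n+2D}{n}$ enters as the dimension of the space of polynomials of degree at most $2D$, which is precisely the size of the ambient space for the Macaulay construction. By Hadamard's inequality, the coefficients of $R_i$ (which are minors of these Macaulay matrices with entries of bitsize $\mathcal{O}(L)$) have bitsize bounded by $\mathcal{O}(r(L+\log r))$. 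Cauchy's classical estimate $|x_i|\leqslant 1+\max_j|a_j|/|a_{\deg R_i}|$ then yields the bound $|x_i|\leqslant 2^{H(r,L)}$ for a universal bivariate polynomial $H$, as claimed.

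The main obstacle is keeping the bitsize polynomial in $r$ and $L$: iterating resultants variable by variable would produce exponential blow-up in $n$, and the core content of Vorob'ev's bound is that assembling the elimination into a single global Macaulay construction keeps the growth under control. A secondary subtlety lies in the first reduction step, where one must argue that a suitable choice of rational hyperplanes preserves a \emph{real} point rather than merely a complex one; this requires a semialgebraic argument, for instance via the fact that real solution sets of polynomial systems have finitely many connected components (Milnor--Thom) with complexity controlled by $r$, so that one can pick a hyperplane meeting some prescribed component.
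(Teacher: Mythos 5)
First, a point of comparison: the paper does not prove this lemma at all --- it is quoted from Vorob'ev \cite{Vorobev86} and used as a black box --- so your sketch can only be judged on its own terms, and as it stands it has two genuine gaps. The first is in the reduction to dimension zero. To intersect ${\cal V}$ with rational hyperplanes of bitsize comparable to $L$ while ``preserving non-emptiness of the real intersection,'' you must exhibit a hyperplane with small rational coefficients that meets a prescribed connected component of ${\cal V}$. A positive-dimensional component could a priori sit at enormous distance from the origin, or project onto a coordinate axis as an interval that is extremely short; in either case no hyperplane of bounded bitsize meets it, and ruling this out is essentially the statement you are trying to prove. The counting argument over a finite family of candidate hyperplanes is therefore circular. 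The second gap is the mismatch between real and complex dimension: even when the real solution set is already zero-dimensional, the complex variety of the same system can be positive-dimensional (e.g.\ $x^{2}+y^{2}=0$ in two variables has one real point but two complex lines), and then the $u$-resultant or Macaulay determinant of your second step vanishes identically and produces no univariate polynomial $R_{i}$. Handling such excess components needs Canny-type generalized characteristic polynomials or a deformation, which the sketch does not supply.

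The standard proofs of bounds of this type (including Vorob'ev's) sidestep both problems by passing to the single polynomial $F=f_{1}^{2}+\cdots+f_{\nu}^{2}$, whose real zero set is exactly ${\cal V}$ and whose degree is at most $2D$ --- this squaring, rather than the Macaulay construction (whose natural degree is the Macaulay bound $\sum_i(\deg f_i-1)+1\leqslant D$), is what accounts for the $2D$ in $r=\binom{n+2D}{n}$. One then locates a point on each connected component of $\{F=0\}$ by the critical-point method applied to an infinitesimal perturbation such as $\{F=\varepsilon\}$, which yields a genuinely zero-dimensional auxiliary system with controlled coefficients; the Hadamard and Cauchy estimates of your second and third steps are applied to that system and survive the limit $\varepsilon\rightarrow0$. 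Your appeal to Milnor--Thom is pointing in the right direction, but it enters through this deformation argument rather than through a choice of hyperplanes. Unless you carry out that argument in full, the honest options are to cite \cite{Vorobev86} as the authors do, or to cite a modern reference on bounds for sample points on real algebraic sets.
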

\begin{rem}
In Vorob'ev's result (and other results of similar type), having rational
or sometimes  integral coefficients is a crucial assumption. In
addition, it presumes the consistency of the system before locating
an a priori bounded solution. In particular, it does not provide
a proof on whether the system admits a solution.
\end{rem}
With the help of Vorob'ev's estimate, we can now establish an explicit
estimate on the factor $c(m,d)$ arising from Theorem \ref{thm: MainLBdd}.
\begin{thm}
\label{thm: ExpC}Keeping the same notation as in Theorem \ref{thm: MainLBdd},
the multiplicative factor $c(m,d)$ satisfies
\[
c(m,d)\geqslant \Lambda_{d}^{-m}\cdot2^{-(\nu_{m,d}!)^{\gamma\nu_{m,d}}},
\]
where $\Lambda_{d}$ is a constant depending only on $d$, $\nu_{m,d}\triangleq\dim{\cal L}_{m}(V)$,
and $\gamma>1$ is a universal constant.
\end{thm}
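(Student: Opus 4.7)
The idea is to track quantitative information through every step of the construction used in the proof of Theorem \ref{thm: MainLBdd}. The output of that construction is a Lie algebraic development $\Phi = \rho \circ F$ whose operator norm is determined by a solution of the polynomial system (\ref{eq: NmdSyst}); the factor $c(m,d)$ is essentially $\|\Phi\|^{-m}$, so the task reduces to bounding a distinguished solution of (\ref{eq: NmdSyst}) in terms of $m$ and $d$ alone.

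First, I would fix, once and for all, a basis $\{h_1, \ldots, h_{\nu_{m,d}}\}$ of $\mathcal{L}_m(V)$ consisting of iterated brackets of a basis of $V$ (e.g.\ a Hall basis), together with the associated dual basis $\{e_1, \ldots, e_d\}$ of $V$. With this choice, each polynomial $p_i(A_1, B_1, \ldots, A_k, B_k)$ appearing in (\ref{eq: NmdSyst}) has \emph{integer} coefficients, and the bitsize $L$ of the system is bounded by a constant depending only on $m$ and $d$ (since the basis itself depends only on $m$ and $d$). Using Remark \ref{rem: ExplicitK}, I may take $k = 4^{\nu-1}\nu!$ in Lemma \ref{lem: Solvability}, where $\nu = \nu_{m,d}$. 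The system (\ref{eq: NmdSyst}) then has $n = 2km = 2 m \cdot 4^{\nu-1}\nu!$ variables, total degree $D = m\nu$, and hence
\[
r = \binom{n + 2D}{n} \leqslant (\nu!)^{C\nu}
\]
for some constant $C$ depending only on $m$ and $d$.

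Next, I would apply Vorob'ev's bound (Lemma \ref{lem: Vorobev}) to obtain a real solution $\mathbf{Z}^{(i)} \in \mathbb{R}^{n}$ of (\ref{eq: NmdSyst}) whose entries are all bounded above by $2^{H(r,L)}$, where $H$ is the universal bivariate polynomial of Vorob'ev. Since $L$ is a constant in $m$ and $d$ and $r \leqslant (\nu!)^{C\nu}$, the bound becomes $2^{(\nu!)^{\gamma\nu}}$ for a universal $\gamma > 1$ (absorbing $H$ and $L$ into the exponent via its polynomial growth). Writing out $\tilde{\mathbf{Z}} = (B(h_1)^{1/m} \mathbf{Z}^{(1)}, \ldots, B(h_\nu)^{1/m} \mathbf{Z}^{(\nu)})$, and using $\|B\| \leqslant 1$ together with $|B(h_i)| \leqslant \|h_i\|_{\mathrm{proj}} \leqslant \Lambda'_d$ (another constant depending only on $d$, since $\{h_i\}$ is fixed), the entries of the concatenated solution — i.e.\ the entries of the block matrices $F(e_j)$ — are bounded by the same quantity $2^{(\nu!)^{\gamma\nu}}$, up to the dimensional factor $\Lambda'_d$.

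Finally, I would convert this entrywise bound into an operator norm bound for $\Phi$. Since $F(e_j)$ is block-diagonal with $k\nu$ sparse blocks of size $m\times m$, its operator norm on $\mathbb{C}^{k\nu m}$ is bounded by $m$ times its maximum entry. The representation $\rho$ is the $k\nu$-th exterior power of the standard representation, which acts as a derivation, so
\[
\|\rho(X)\|_{W \to W} \leqslant k\nu \cdot \|X\|_{\mathrm{op}}.
\]
Combining, $\|\Phi(e_j)\|_{W \to W} \leqslant \Lambda''_d \cdot 2^{(\nu!)^{\gamma\nu}}$, and then passing from the basis to arbitrary $v \in V$ using the equivalence between the norm of $V$ and the $\ell^1$ norm on $\mathbb{R}^d$ (which costs a factor $\Lambda_d$ depending only on $d$) yields $\|\Phi\|_{V \to \mathrm{End}(W)} \leqslant \Lambda_d \cdot 2^{(\nu!)^{\gamma\nu}}$. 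Raising to the $m$-th power and inverting gives the claimed bound. The main obstacle is bookkeeping: one must verify that every dimensional constant appearing along the way — the bitsize $L$, the basis norms $\|h_i\|_{\mathrm{proj}}$, the exterior-power factor $k\nu$, and the norm-equivalence factor on $V$ — can indeed be absorbed into $\Lambda_d^m$ without polluting the doubly-exponential factor $2^{-(\nu!)^{\gamma\nu}}$, and that Vorob'ev's polynomial $H$ is flexible enough to accommodate the binomial growth of $r$.
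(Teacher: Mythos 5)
Your plan follows the same route as the paper's proof: fix a Hall basis so that the system (\ref{eq: NmdSyst}) has small integer coefficients, take $k=4^{\nu-1}\nu!$ from Remark \ref{rem: ExplicitK}, invoke Vorob'ev's estimate to bound a solution, and then unwind the solution bound into an operator-norm bound for $\Phi$. The bookkeeping at the end (absorbing $k$, $m$, $\|h_i\|\leqslant 2^{m}$ and the exterior-power factor into the doubly exponential term, leaving only $\Lambda_d^m$ from the norm comparison on $V$) is exactly what the paper does, and your slightly more detailed account of how the entrywise bound converts into $\|\Phi\|_{V\rightarrow\mathrm{End}(W)}$ is fine.

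There is, however, one concrete gap. You propose to ``apply Vorob'ev's bound to obtain a real solution $\mathbf{Z}^{(i)}\in\mathbb{R}^{n}$ of (\ref{eq: NmdSyst})''. The system (\ref{eq: NmdSyst}) is a system in \emph{complex} variables, and the only consistency we have (from Lemma \ref{lem: Solvability}) is over $\mathbb{C}$; the system need not admit any real solution, so Lemma \ref{lem: Vorobev} --- which concerns the set of \emph{real} solutions of a consistent system --- cannot be applied directly. One must first realify: replace each complex variable by a pair of real variables, turning (\ref{eq: NmdSyst}) into a system of $2\nu$ real equations in $2kdm$ real unknowns whose real-solution set is nonempty precisely because the complex system is consistent. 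This step changes the parameters you feed into Vorob'ev ($D$ becomes $2m\nu$ rather than $m\nu$, and $n$ doubles), and more importantly it requires checking that the coefficients remain integers of bitsize at most $m$ after expanding real and imaginary parts. The paper verifies this using the specific shape of the monomials in (\ref{eq: Monomial}): each $p_i$ is multilinear in the individual complex variables, so the realified coefficients are not enlarged. Without this observation the bitsize $L$ could a priori grow with the degree of the polynomials, which would pollute the final exponent. With the realification and the multilinearity remark added, your argument closes and matches the paper's.
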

\begin{proof}
Essentially we just need to keep track of the quantities appearing
in the proof of Theorem \ref{thm: MainLBdd} in a precise way.

First of all, in that proof we fix a basis $\{{\rm e}_{1},\cdots,{\rm e}_{d}\}$
of $V$ with norm $1$, and assume that $\{h_{1},\cdots,h_{\nu}\}$
is a Hall basis of ${\cal L}_{m}(V)$ built over the letters ${\rm e}_{1},\cdots,{\rm e}_{d}$.
Next, in the representation $\rho:\mathfrak{sl}(k\cdot m,\mathbb{C})\rightarrow\Lambda^k(\mathbb{C}^{km})$,
we work with the $l^{1}$-norm on $\Lambda^k(\mathbb{C}^{km})$
with respect to the canonical exterior basis. In addition, by Remark
\ref{rem: ExplicitK} we choose $k=4^{\nu-1}\nu!$ for the system
(\ref{eq: NmdSyst}). Recall that ${\bf Z}^{(i)}$ (respectively,
$\tilde{{\bf Z}}$) is a solution to the system (\ref{eq: NmdSyst})
(respectively, (\ref{eq: MainSyst})). Now we presume that for each
$i,$ all components of ${\bf Z}^{(i)}$ are bounded by a number $M$.
Using the observation that $\|h_{i}\|\leqslant2^{m},$ we know that
all components of $\tilde{{\bf Z}}$ are bounded by $2M$. It then
follows from a simple unwinding of definitions that 
\begin{equation}
\|\Phi\|_{V\rightarrow\mathrm{End}(W)}\leqslant2\Lambda_{d}kM,\label{eq: OPfromSol}
\end{equation}
where $\Lambda_{d}$ is the constant depending only on $d$ which arises
from the comparison between the given norm $\|\cdot\|_{V}$ on $V$
and the $l^{1}$-norm $\|\cdot\|_{1}$ with respect to the basis $\{{\rm e}_{1},\cdots,{\rm e}_{d}\}$,
i.e. $\|\cdot\|_{1}\leqslant \Lambda_{d}\|\cdot\|_{V}.$

It remains to work out $M$ explicitly. The first observation is that,
the system (\ref{eq: NmdSyst}) has integral coefficients each being
bounded by $2^{m}.$ To apply Vorob'ev's estimate, we need to turn
the system into an equivalent system over real variables, which can
be done by viewing each complex variable as a pair of real variables.
In this way, (\ref{eq: NmdSyst}) becomes a system with $2\nu$ equations
and $2kdm$ real variables. The next observation is that, the new
system again has integer coefficients, and more importantly when transforming
from complex to real, the coefficients are not enlarged. This is due
to the fact that the polynomial $p_{i}$ is linear with respect to
every single complex variable when the others are frozen (cf. (\ref{eq: Monomial})
for the shape of relevant monomials). Therefore, using the notation
in Lemma \ref{lem: Vorobev}, we find that 
\[
L\leqslant m,\ D=2m\nu,\ n=\frac{1}{2}dm4^{\nu}\nu!,\ r=\left(\begin{array}{c}
n+2D\\
n
\end{array}\right).
\]
It follows from Stirling's approximation and Vorob'ev's estimate that
$M\leqslant2^{(\nu!)^{\kappa\nu}}$ with some universal constant $\kappa>1$
independent of the system. Now the result follows by substituting
this into (\ref{eq: OPfromSol}) and using Theorem \ref{thm: MainLBdd}.
\end{proof}
\begin{rem}
The proof of Theorem \ref{thm: MainLBdd} does not provide the optimal
way of constructing the Lie algebraic development $\Phi$ in general,
and the explicit lower bound given by Theorem \ref{thm: ExpC} does
not seem to be optimal either. To improve the estimate, among the
class of Lie algebraic developments $\Phi$ in which $\|\pi_{m}(l)\|$
is an eigenvalue of $\Phi(\pi_{m}(l)),$ one needs to minimize the
operator norm of $\Phi.$ As we will see in low degree cases, there
are plenty of rooms for reducing the operator norm of $\Phi$ and
hence improving the factor $c(m,d)$. The sharp lower bound (Conjecture
\ref{conj: LengConjPure}) will hold if one can achieve $\|\Phi\|_{V\rightarrow\mathrm{End}(W)}=1$.
\end{rem}

As an immediate corollary of our methodology, we prove the following separation of points property for signatures. Such a separation property was first obtained by Chevyrev-Lyons \cite{CL16} 
as a key ingredient of proving their uniqueness result for the expected signature of stochastic processes.

\begin{cor}\label{cor: SepProp}  Let $V$ be a finite dimensional vector space. \\
 (1) Let $l,l'\in \mathcal{L}(V)$ be two distinct Lie polynomials over $V$. Then there exists a finite dimensional development $\Phi:V\rightarrow\mathrm{End}(W)$ such that $\Phi(l)\neq\Phi(l')$. \\
(2) Let $g_1,g_2$ be the signatures of two weakly geometric rough paths over $V$. Suppose that $g_1\neq g_2$. Then there exists a finite dimensional development $\Phi:V\rightarrow\mathrm{End}(W)$ such that $\Phi(g_1)\neq\Phi(g_2)$.
\end{cor}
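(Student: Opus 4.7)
The plan is to reduce both parts to a single injectivity lemma: for every $n\geqslant 1$ and every nonzero tensor $\eta\in V^{\otimes n}$ (\textit{not} necessarily a Lie polynomial), there exists a finite dimensional development $\Phi_0\colon V\to\mathrm{End}(\mathbb{C}^n)$ such that $\Phi_0^{(n)}(\eta)\neq 0$. To establish this, one adapts the cyclic matrix construction appearing in the proof of Theorem~\ref{thm: MainLBdd} to the fixed degree $n$: choose a basis $\{{\rm e}_1,\dots,{\rm e}_d\}$ of $V$ and send each ${\rm e}_i$ to a matrix in $\mathfrak{sl}(n,\mathbb{C}[\text{parameters}])$ whose only nonzero entries lie on the super-diagonal and the $(n,1)$-slot, each basis vector carrying its own independent block of polynomial parameters. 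The $(1,1)$-entry of the induced polynomial-valued matrix then yields a linear map $T\colon V^{\otimes n}\to\mathbb{C}[\text{parameters}]$ which sends each basis tensor ${\rm e}_{i_1}\otimes\cdots\otimes {\rm e}_{i_n}$ to a distinct degree-$n$ monomial, so $T$ is injective. Consequently $T(\eta)\neq 0$ as a polynomial, and specializing the parameters at any point where it does not vanish produces the desired $\Phi_0$.

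Given the lemma, part~(1) follows by a scaling argument. Write $\xi:=l-l'=\sum_k \xi_k$ with $\xi_k\in\mathcal{L}_k(V)\subseteq V^{\otimes k}$, pick any $k$ for which $\xi_k\neq 0$, and apply the lemma to obtain $\Phi_0$ with $\Phi_0^{(k)}(\xi_k)\neq 0$. By the multiplicativity of the extension, the one-parameter family $\Phi_\lambda:=\lambda\Phi_0$ satisfies
\[
\Phi_\lambda(\xi)=\sum_{j}\lambda^{j}\Phi_0^{(j)}(\xi_j),
\]
which is a polynomial in $\lambda$ valued in $\mathrm{End}(\mathbb{C}^{n})$ whose coefficient at $\lambda^{k}$ equals $\Phi_0^{(k)}(\xi_k)\neq 0$. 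This polynomial is therefore not identically zero, so for generic $\lambda$ one has $\Phi_\lambda(l)\neq\Phi_\lambda(l')$.

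For part~(2), let $n$ be the smallest degree at which $(g_1)_n\neq (g_2)_n$, set $\eta:=(g_1)_n-(g_2)_n\in V^{\otimes n}$, and apply the lemma to obtain $\Phi_0$ with $\Phi_0^{(n)}(\eta)\neq 0$. Because $g_1$ and $g_2$ are signatures of weakly geometric rough paths, Theorem~\ref{thm: LyonsExt} supplies a factorial decay bound on $\|(g_i)_k\|$, so the series
\[
\Phi_\lambda(g_1)-\Phi_\lambda(g_2)=\sum_{k\geqslant n}\lambda^{k}\,\Phi_0^{(k)}\bigl((g_1)_k-(g_2)_k\bigr)
\]
converges absolutely for every $\lambda\in\mathbb{C}$ and defines an entire function of $\lambda$ whose coefficient at $\lambda^{n}$ is the nonzero element $\Phi_0^{(n)}(\eta)$. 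Thus this entire function is not identically zero, and any generic $\lambda$ yields a finite dimensional development separating $g_1$ from $g_2$.

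The main obstacle lies in the injectivity lemma. Its essence is already present in the argument for Theorem~\ref{thm: MainLBdd}, but two points must be checked that did not matter there: the cyclic matrix construction should be performed for an \textit{arbitrary} degree $n$, not only the roughness $m$ of a pure rough path, and the map $T$ must be shown injective on all of $V^{\otimes n}$ rather than only on the Lie-polynomial subspace $\mathcal{L}_n(V)$. Both are immediate from the distinct-monomial description of $T$, after which the scaling arguments in the remaining two steps are routine.
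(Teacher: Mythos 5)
Your proposal is correct, and it takes a genuinely leaner route than the paper. The paper obtains part (1) as a byproduct of the full construction in Theorem~\ref{thm: MainLBdd}: it invokes the $\mathfrak{sl}(k\cdot m,\mathbb{C})$ development built there (weights, Cartan subalgebras, the consistency Lemma~\ref{lem: Solvability}) at the first degree $m$ where $l$ and $l'$ differ, and then scales $F\mapsto\varepsilon F$ and lets $\varepsilon\to 0$ so that the $\varepsilon^{m}$ term dominates. You correctly observe that all of that machinery exists only to control the operator norm of $\Phi$ uniformly in the coefficients, which is irrelevant for separation; the single ingredient actually needed is the injectivity of the $(1,1)$-entry map $T$ on $V^{\otimes n}$, which is exactly the distinct-monomial computation (\ref{eq: Monomial}) from the paper's proof, and which (as the paper itself notes) is injective on all of $V^{\otimes n}$, not merely on $\mathcal{L}_n(V)$. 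Your ``generic $\lambda$'' argument via a nonvanishing polynomial coefficient replaces the paper's $\varepsilon\to 0$ asymptotics and is equally valid. For part (2) the divergence is more substantive: the paper passes to log-signatures $g_i=\exp(l_i)$, invokes the positive radius of convergence of the logarithmic signature (\cite{LS06}, \cite{Chevyrev13}) and the fact that $\exp$ is a local diffeomorphism on $\mathrm{Aut}(W)$, whereas you work directly with the signature components and need only the factorial decay of Theorem~\ref{thm: LyonsExt} to get an entire function of $\lambda$ with a nonzero Taylor coefficient. This removes the dependence on the log-signature convergence result entirely, which is a genuine simplification. The only point worth flagging is the degenerate case $n=1$ of your injectivity lemma, where the cyclic construction in $\mathfrak{sl}(1,\mathbb{C})$ collapses; this is trivially repaired (e.g.\ run the same construction in $\mathfrak{sl}(2,\mathbb{C})$ and read off the $(1,2)$-entry, which is a nonzero linear form in the parameters), but it should be said.
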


\begin{proof}
(1) Let $m\geqslant 1$ be the smallest integer such that $\pi_m(l)\neq\pi_m(l')$. According to the proof of Theorem \ref{thm: MainLBdd}, there exists a finite dimensional Lie algebraic development 
\[
\Phi:V\stackrel{F}{\longrightarrow}\mathfrak{g}\stackrel{\rho}{\longrightarrow}\mathrm{End}(W)
\]such that \[
\Phi(\pi_{m}(l))\neq\Phi(\pi_{m}(l')).
\]More explicitly, we have $\mathfrak{g}=\mathfrak{sl}(k\cdot m,\mathbb{C})$ and $W=\Lambda^k(\mathbb{C}^{km})$ with $k=4^{\nu-1}\nu!$ and $\nu=\dim \mathcal{L}_m(V)$. For given $\varepsilon>0$, define $\Phi_\varepsilon\triangleq \rho\circ(\varepsilon\cdot F)$. It follows that
\begin{align*}
\Phi_{\varepsilon}(l-l') & =(\rho\circ(\varepsilon\cdot F))(l-l')\\
 & =\rho\left(\varepsilon^{m}\cdot F(\pi_{m}(l-l'))+\sum_{n>m}\varepsilon^{n}\cdot F(\pi_{n}(l-l'))\right)\\
 & =\varepsilon^{m}\cdot\Phi(\pi_{m}(l-l'))+\sum_{n>m}\varepsilon^{n}\cdot\Phi(\pi_{n}(l-l')).
\end{align*}Note that the summation is indeed finite since $l,l'$ are Lie polynomials. Therefore, we see that\[
\Phi_{\varepsilon}(l-l')=\varepsilon^{m}\cdot\Phi(\pi_{m}(l-l'))+o(\varepsilon^{m}),
\]which implies that $\Phi_\varepsilon(l-l')\neq0$ when $\varepsilon$ is small. Any such $\Phi_\varepsilon$ will satisfy the desired property.

(2) Write $g=\exp(l)$ and $g'=\exp(l')$ where $l,l'$ are Lie series respectively. In the same way as the proof of the first part, let $m\geqslant1$ be the smallest integer such that $\pi_m(l)\neq\pi_m(l')$, and choose a finite dimensional Lie algebraic development $\Phi=\rho\circ F:V\rightarrow \mathfrak{g}\rightarrow \mathrm{End}(W)$ separating $\pi_m(l)$ and $\pi_m(l')$. Since $g$ and $g'$ are path signatures, it is known that (cf. \cite{LS06} and \cite{Chevyrev13}), $l$ and $l'$ both have positive radius of convergence when viewed as formal tensor series. In particular, both of \[
\varepsilon\mapsto\Phi_{\varepsilon}(l),\ \varepsilon\mapsto\Phi_{\varepsilon}(l')
\]
are analytic functions in some neighbourhood of $\varepsilon=0$ where $\Phi_\varepsilon\triangleq \rho\circ(\varepsilon\cdot F)$. Therefore, we see that\[
\Phi_{\varepsilon}(l)=\varepsilon^{m}\cdot\Phi(\pi_{m}(l))+o(\varepsilon^{m}),\ \ \ \Phi_{\varepsilon}(l')=\varepsilon^{m}\cdot\Phi(\pi_{m}(l'))+o(\varepsilon^{m}),
\]when $\varepsilon$ is small. Note that we also have \[
\Phi_{\varepsilon}(g)=\exp\left(\Phi_{\varepsilon}(l)\right),\ \ \ \Phi_{\varepsilon}(g')=\exp\left(\Phi_{\varepsilon}(l')\right).
\]Since the exponential map for the group $\mathrm{Aut}(W)$ is a local diffeomorphism at the identity, the desired separation property holds under the development $\Phi_\varepsilon$ when $\varepsilon$ is small.
\end{proof}

\begin{rem}
One advantage of stating the separation property at the level of free Lie algebra is that the property becomes purely algebraic. Even at the level of  signature, the dependence on analytic properties is rather mild. Indeed, the proof of the positive radius of convergence for the logarithmic signature given in \cite{Chevyrev13} requires only the faster-than-geometric decay for signature components. This is the only analytic condition needed here.
\end{rem}

\subsubsection{\label{subsec: LowDeg}Explicit calculations in low degrees}

We perform some more explicit calculations in low degrees to illustrate
the methodology better. We consider $V=\mathbb{R}^{2}$ equipped with
the $l^{1}$-norm with respect to the standard basis $\{{\rm e}_{1},{\rm e}_{2}\}$. The associated projective tensor norm then coincides with the $l^1$-norm with respect to the canonical tensor basis.
In this context, we are going to show
that, the sharp lower bound holds in degrees $m=2,3$ and some cases in degrees $m=4,5$. When $m=4$, we have $c(4,2)\geqslant5/32$ in general.

\paragraph*{I. Sharp lower bound in degrees $2$ and $3$}
\label{sec: ExpI}
\addcontentsline{toc}{paragraph}{\nameref{sec: ExpI}}

$\ $\\
\\
Let $\mathbf{X}_{t}=\exp(tl)$ be a pure $2$-rough path, and write
$\pi_{2}(l)=c[{\rm e}_{1},{\rm e}_{2}]\in{\cal L}_{2}(V)$. In order
to develop ${\cal L}_{2}(V)$ into a Cartan subalgebra, according
to Lemma \ref{lem: IntoCartan} and Example \ref{exa: DevSL}, we
choose $\mathfrak{g}=\mathfrak{sl}(2,\mathbb{C})$, and define $F:V\rightarrow\mathfrak{g}$
by 
\[
F({\rm e}_{1})=\left(\begin{array}{cc}
0 & a_{1}\\
a_{2} & 0
\end{array}\right),\ F({\rm e}_{2})=\left(\begin{array}{cc}
0 & b_{1}\\
b_{2} & 0
\end{array}\right),
\]
where $a_{1},a_{2},b_{1},b_{2}$ are parameters to be specified. In
addition, we choose $\rho:\mathfrak{g}\rightarrow{\rm End}(\mathbb{C}^{2})$
to be the canonical matrix representation, where $\mathbb{C}^{2}$
is equipped with the standard $l^{1}$-norm.

Note that 
\[
F([{\rm e}_{1},{\rm e}_{2}])=\left(\begin{array}{cc}
a_{1}b_{2}-a_{2}b_{1} & 0\\
0 & a_{2}b_{1}-a_{1}b_{2}
\end{array}\right)\in\mathfrak{h}.
\]
Since $\|\pi_{2}(l)\|=2|c|$, we set up the equation
\begin{equation}
a_{1}b_{2}-a_{2}b_{1}=+2\ {\rm or}\ -2,\label{eq: Deg2Eq}
\end{equation}
depending on whether $c$ is positive or negative. This will allow
us to produce $\|\pi_{2}(l)\|$ as an eigenvalue of $\Phi(\pi_{2}(l))\in{\rm End}(\mathbb{C}^{2}).$
Among all solutions, the minimum $\|\Phi\|_{\mathbb{R}^2\rightarrow\mathrm{End}(\mathbb{C}^2)}=1$ is obtained
at 
\[
a_{1}=a_{2}=1,\ b_{1}=\mp1,\ b_{2}=\pm1,
\]
where the signs are chosen depending on whether $c$ is positive or
negative. According to Theorem \ref{thm: upper} and Theorem \ref{thm: IntLBdd},
we conclude that $L_{2}({\bf X})=\|\pi_{2}(l)\|$ and thus Conjecture
\ref{conj: LengConjPure} holds for roughness $m=2$\textbf{.}

Next we consider the case when $l\in{\cal L}^{(3)}(V)$. In this case,
$\pi_{3}(l)\in{\cal L}_{3}(V)$ takes the form 
\[
\pi_{3}(l)=c_{1}[{\rm e}_{1},[{\rm e}_{1},{\rm e}_{2}]]+c_{2}[[{\rm e}_{1},{\rm e}_{2}],{\rm e}_{2}].
\]
To develop ${\cal L}_{3}(V)$ into a Cartan subalgebra, we choose
$\mathfrak{g}={\rm \mathfrak{sl}(3,\mathbb{C})},$ define $F:V\rightarrow\mathfrak{g}$
by 
\[
F({\rm e}_{1})=\left(\begin{array}{ccc}
0 & a_{1} & 0\\
0 & 0 & a_{2}\\
a_{3} & 0 & 0
\end{array}\right),\ F({\rm e}_{2})=\left(\begin{array}{ccc}
0 & b_{1} & 0\\
0 & 0 & b_{2}\\
b_{3} & 0 & 0
\end{array}\right)
\]
where $a_{i},b_{j}$'s are parameters to be determined, and choose
$\rho:\mathfrak{g}\rightarrow{\rm End}(\mathbb{C}^{3})$ to be the
canonical matrix representation where $\mathbb{C}^{3}$ is equipped
with the standard $l^{1}$-norm.

Suppose that $c_{1},c_{2}>0,$ under which we have $\|\pi_{3}(l)\|=4c_{1}+4c_{2}$.
To match the eigenvalues, we set up a system of equations
\[
\mu_{1}(F([{\rm e}_{1},[{\rm e}_{1},{\rm e}_{2}]]))=4,\ \mu_{1}(F([[{\rm e}_{1},{\rm e}_{2}],{\rm e}_{2}]))=4,
\]
where recall that $\mu_{1}$ is a weight for $\rho$ defined by taking
the first diagonal entry. By direct calculation, the system reads
\[
\begin{cases}
a_{1}a_{2}b_{3}+a_{2}a_{3}b_{1}-2a_{1}a_{3}b_{2}=4,\\
a_{1}b_{2}b_{3}-2a_{2}b_{1}b_{3}+a_{3}b_{1}b_{2}=4.
\end{cases}
\]
Among all its solutions, the minimum $\|\Phi\|_{\mathbb{R}^2\rightarrow\mathrm{End}(\mathbb{C}^3)}=1$ is achieved
at 
\[
a_{1}=a_{2}=1,\ a_{3}=-1,\ b_{1}=-1,\ b_{2}=b_{3}=1.
\]
The cases for other sign conditions on $c_{1},c_{2}$ are treated
similarly. Therefore, Conjecture \ref{conj: LengConjPure} holds for
roughness $m=3.$

\paragraph*{II. The degree $4$ case}
\label{sec: ExpII}
\addcontentsline{toc}{paragraph}{\nameref{sec: ExpII}}

$\ $\\
\\
Now consider $l\in{\cal L}^{(4)}(V)$ with $\pi_{4}(l)=c_{1}h_{1}+c_{2}h_{2}+c_{3}h_{3},$
where 
\[
h_{1}=[[{\rm e}_{1},[{\rm e}_{1},{\rm e}_{2}]],{\rm e}_{1}],\ h_{2}=[[[{\rm e}_{1},{\rm e}_{2}],{\rm e}_{2}],{\rm e}_{2}],\ h_{3}=[{\rm e}_{1},[[{\rm e}_{1},{\rm e}_{2}],{\rm e}_{2}]]
\]
form a Hall basis of ${\cal L}_{4}(V).$ In this case, we demonstrate
the possibility of using other root systems that are not isomorphic
to $\mathfrak{sl}(n,\mathbb{C}),$ and show that 
\begin{equation}
L_{4}({\bf X})\geqslant\begin{cases}
\frac{5}{32}\|\pi_{4}(l)\|, & c_{1}\cdot c_{2}\geqslant0,\\
\frac{\sqrt{7}}{8}\|\pi_{4}(l)\|, & c_{1}\cdot c_{2}<0.
\end{cases}\label{eq: Deg4LBdd}
\end{equation}

To be precise, we choose $\mathfrak{g}=\mathfrak{so}(5,\mathbb{C})$
and develop ${\cal L}_{4}(V)$ into a Cartan subalgebra according
to Example \ref{exa: SO}. A Cartan subalgebra $\mathfrak{h}$ is
generated by the two elements 
\[
H_{1}=\left(\begin{array}{ccccc}
0 & 1 & 0 & 0 & 0\\
-1 & 0 & 0 & 0 & 0\\
0 & 0 & 0 & 0 & 0\\
0 & 0 & 0 & 0 & 0\\
0 & 0 & 0 & 0 & 0
\end{array}\right),\ H_{2}=\left(\begin{array}{ccccc}
0 & 0 & 0 & 0 & 0\\
0 & 0 & 0 & 0 & 0\\
0 & 0 & 0 & 1 & 0\\
0 & 0 & -1 & 0 & 0\\
0 & 0 & 0 & 0 & 0
\end{array}\right).
\]
The generators of the three root spaces $\mathfrak{g}^{\alpha},\mathfrak{g}^{\beta},\mathfrak{g}^{\gamma}$
corresponding to the specified roots $\alpha,\beta,\gamma$ in that
example can be chosen as 
\[
X_{\alpha}=\left(\begin{array}{ccccc}
0 & 0 & 0 & 0 & 0\\
0 & 0 & 0 & 0 & 0\\
0 & 0 & 0 & 0 & 1\\
0 & 0 & 0 & 0 & i\\
0 & 0 & -1 & -i & 0
\end{array}\right),
\]
\[
X_{\beta}=\left(\begin{array}{ccccc}
0 & 0 & -1 & i & 0\\
0 & 0 & i & 1 & 0\\
1 & -i & 0 & 0 & 0\\
-i & -1 & 0 & 0 & 0\\
0 & 0 & 0 & 0 & 0
\end{array}\right),\ X_{\gamma}=\left(\begin{array}{ccccc}
0 & 0 & -1 & i & 0\\
0 & 0 & -i & -1 & 0\\
1 & i & 0 & 0 & 0\\
-i & 1 & 0 & 0 & 0\\
0 & 0 & 0 & 0 & 0
\end{array}\right)
\]
respectively. We refer the reader to \cite{Helgason78},
Chapter III, Section 8 for an explicit description of the root space
decomposition of $\mathfrak{g},$ from which one will see how the
above matrices arise naturally.

Now we define $F:V\rightarrow\mathfrak{g}$ by 
\[
F({\rm e}_{1})=a_{1}X_{\alpha}+a_{2}X_{\beta}+a_{3}X_{\gamma},\ F({\rm e}_{2})=b_{1}X_{\alpha}+b_{2}X_{\beta}+b_{3}X_{\gamma},
\]
where $a_{i},b_{j}$'s are parameters to be chosen. According to Example
\ref{exa: SO}, we have $F({\cal L}_{4}(V))\subseteq\mathfrak{h}.$
We choose $\rho:\mathfrak{g}\rightarrow\mathbb{C}^{5}$ to be the
canonical matrix representation, where $\mathbb{C}^{5}$ is equipped
with the standard Hermitian norm. A common eigenbasis of $\mathbb{C}^{5}$
for all elements in $\mathfrak{h}$ under $\rho$ is given by 
\[
w_{1}=\varepsilon_{5},\ w_{2}=i\varepsilon_{1}+\varepsilon_{2},\ w_{3}=-i\varepsilon_{1}+\varepsilon_{2},\ w_{4}=i\varepsilon_{3}+\varepsilon_{4},\ w_{5}=-i\varepsilon_{3}+\varepsilon_{4},
\]
where $\{\varepsilon_{1},\cdots,\varepsilon_{5}\}$ is the canonical
basis of $\mathbb{C}^{5}.$ For $H=xH_{1}+yH_{2}\in\mathfrak{h},$
the set of eigenvalues of $\rho(H)$ with respect to the above eigenbasis
(listed in the same order) is $\{0,-ix,ix,-iy,iy\}.$ Denote $\mu$
as the weight defined by $H=xH_{1}+yH_{2}\mapsto iy,$ the eigenvalue
with respect to the common eigenvector $w_{5}.$

Suppose that $c_{1},c_{2},c_{3}>0$, under which we have $\|\pi_{4}(l)\|=8c_{1}+8c_{2}+6c_{3}.$
We then set up a polynomial system 
\begin{equation}
\mu(F(h_{1}))=8,\ \mu(F(h_{2}))=8,\ \mu(F(h_{3}))=6.\label{eq: Deg4Syst}
\end{equation}
The left hand side consists of homogeneous polynomials of degree $4$
in six variables $a_{i},b_{j}$. To simplify computation, we restrict
ourselves to solutions satisfying $a_{2}=a_{3},b_{2}=b_{3}.$ Under
this constraint, by explicit calculation it is seen that $\pm\mu(F(h_{i}))$
become the only possibly nonzero eigenvalues of $\Phi(h_{i})$ ($i=1,2,3$),
and the system (\ref{eq: Deg4Syst}) reads
\[
\begin{cases}
4a_{1}a_{3}(a_{1}b_{3}-a_{3}b_{1})=1,\\
4b_{1}b_{3}(a_{1}b_{3}-a_{3}b_{1})=-1,\\
8(a_{1}^{2}b_{3}^{2}-a_{3}^{2}b_{1}^{2})=3.
\end{cases}
\]
Treating $a_{1}$ as a free variable, the above system can be solved
explicitly to yield precisely four scenarios:
\[
\begin{cases}
a_{3}=\pm\frac{\sqrt{10}}{10a_{1}},\\
b_{1}=-\frac{a_{1}}{2},\\
b_{3}=\pm\frac{\sqrt{10}}{5a_{1}},
\end{cases},\ \ \ \begin{cases}
a_{3}=\pm\frac{\sqrt{10}i}{10a_{1}},\\
b_{1}=2a_{1},\\
b_{3}=\mp\frac{\sqrt{10}}{20a_{1}}.
\end{cases}
\]
In other words, the solution set $\Sigma\subseteq\mathbb{C}^{4}$
has complex dimension one and consists of four irreducible components
$\Sigma_{1},\Sigma_{2},\Sigma_{3},\Sigma_{4},$ each being globally
parametrized by $a_{1}\in\mathbb{C}\backslash\{0\}.$

Finally, we try to minimize the operator norm of $\Phi$ over $\Sigma$.
To this end, first recall that given an $n\times n$ complex matrix
$A$, when viewed as a linear transformation over $\mathbb{C}^{n}$, the
operator norm of $A$ with respect to the standard Hermitian norm
on $\mathbb{C}^{n}$ coincides with the maximal singular value of
$A$. By direct calculation, on the component $\Sigma_{1},$ the sets
of singular values of $\Phi({\rm e}_{1})$, $\Phi({\rm e}_{2})\in{\rm End}(\mathbb{C}^{5})$
are 
\[
\left\{ 0,\sqrt{2}|a_{1}|,\frac{2\sqrt{5}}{5|a_{1}|}\right\} ,\ \left\{ 0,\frac{|a_{1}|}{\sqrt{2}},\frac{4\sqrt{5}}{5|a_{1}|}\right\} 
\]
respectively. Therefore, we have 
\[
\|\Phi\|_{\mathbb{R}^2\rightarrow\mathrm{End}(\mathbb{C}^5)}=\max\left\{ \|\Phi({\rm e}_{1})\|_{\mathbb{C}^5\rightarrow\mathbb{C}^5},\|\Phi({\rm e}_{2})\|_{\mathbb{C}^5\rightarrow\mathbb{C}^5}\right\} =\max\left\{ \sqrt{2}|a_{1}|,\frac{4\sqrt{5}}{5|a_{1}|}\right\} .
\]
It is now elementary to see that the minimum of $\|\Phi\|_{\mathbb{R}^2\rightarrow\mathrm{End}(\mathbb{C}^5)}$
over $\Sigma_{1}$ is achieved at $|a_{1}|=2\cdot10^{-1/4}$, and
the minimum equals $2\sqrt{2}\cdot10^{-1/4}.$ Similar calculation
over the other three components of $\Sigma$ yields exactly the same
minimum. Therefore, we conclude that 
\[
\inf_{\Sigma}\|\Phi\|_{\mathbb{R}^2\rightarrow\mathrm{End}(\mathbb{C}^5)}=2\sqrt{2}\cdot10^{-1/4},
\]
and the infimum is achieved at a Lie algebraic development determined
by, for instance, 
\[
a_{1}=2\cdot10^{-1/4},\ a_{2}=a_{3}=\frac{1}{2}\cdot10^{-1/4},\ b_{1}=-10^{-1/4},\ b_{2}=b_{3}=10^{-1/4}.
\]
Under this development, we have the lower bound 
\[
L_{4}({\bf X})\geqslant\frac{8c_{1}+8c_{2}+6c_{3}}{\|\Phi\|_{\mathbb{R}^2\rightarrow\mathrm{End}(\mathbb{C}^5)}^{4}}=\frac{5}{32}\|\pi_{4}(l)\|.
\]

The discussion for other sign conditions on the coefficients $c_{1},c_{2},c_{3}$
is entirely analogous by adjusting the signs on the right hand side of the system
(\ref{eq: Deg4Syst}) accordingly. This eventually leads us
to precisely two scenarios of the desired lower estimate (\ref{eq: Deg4LBdd}).
We omit the lengthy and repeating calculations.

On the other hand, if one of the coefficients $c_{1},c_{2},c_{3}$ is zero, the lower
bound can be improved further, since one equation from the system
(\ref{eq: Deg4Syst}) is removed which produces a higher dimensional
solution set. Indeed, when $c_{3}=0$, one obtains the sharp lower bound and hence Conjecture
\ref{conj: LengConjPure} holds for this case. A simple choice of Lie algebraic developments
achieving the sharp lower bound is the following. Choose $\mathfrak{g}$ to be $\mathfrak{sl}(4,\mathbb{C})$,
the representation $\rho$ to be the canonical matrix representation,
and the embedding $F:V\rightarrow\mathfrak{g}$ to be given by
\[
{\rm e}_{1}\mapsto A\triangleq\left(\begin{array}{cccc}
0 & 1 & 0 & 0\\
0 & 0 & 1 & 0\\
0 & 0 & 0 & 1\\
-1 & 0 & 0 & 0
\end{array}\right),\ {\rm e}_{2}\mapsto B\triangleq\left(\begin{array}{cccc}
0 & 1 & 0 & 0\\
0 & 0 & -1 & 0\\
0 & 0 & 0 & 1\\
1 & 0 & 0 & 0
\end{array}\right)
\]
if $c_{1}\cdot c_{2}\geqslant0,$ and 
\[
{\rm e}_{1}\mapsto{\rm e}^{\frac{5\pi i}{8}}\cdot A,\ {\rm e}_{2}\mapsto{\rm e}^{\frac{\pi i}{8}}\cdot B
\]
if $c_{1}\cdot c_{2}<0$, respectively. The same conclusion is true
in degree $5$ when $\pi_{5}(l)$ consists of a single Hall polynomial.
We again omit the similar type of calculations.

\section{\label{sec: FreeLie}The case of Hilbert-Schmidt tensor norm:
proof of Theorem \ref{thm: FreeLie}}

As we mentioned earlier (cf. Theorem
\ref{thm: FreeLie}), Conjecture \ref{conj: LengConjPure} can be
proved for a special class of pure rough paths if we work with the
Hilbert-Schmidt tensor norm instead. Here we give an independent proof of this result. 

Let $V=\mathbb{R}^{d}$ be equipped with the $l^{2}$-metric with
respect to the standard basis $\{{\rm e}_{1},\cdots,{\rm e}_{d}\}$.
We equip each $V^{\otimes m}$ with the $l^{2}$-metric with respect
to the standard tensor basis. They extend to an inner product structure
$\langle\cdot,\cdot\rangle$ on the subalgebra $T(V)$ of $T((V))$ consisting
of finite tensors by requiring that $V^{\otimes m}$ and $V^{\otimes n}$
are orthogonal if $m\neq n$. By considering basis elements and using
bilinearity, it is immediate that 
\[
\langle\xi_{m}\otimes\xi_{n},\eta_{m}\otimes\eta_{n}\rangle=\langle\xi_{m},\eta_{m}\rangle\cdot\langle\xi_{n},\eta_{n}\rangle
\]
for all $\xi_{m},\eta_{m}\in V^{\otimes m}$ and $\xi_{n},\eta_{n}\in V^{\otimes n}.$

Recall from the assumption that ${\bf X}_{t}=\exp(t(l_{a}+l_{b}))\in G^{(b)}(V)$,
where $a<b$ and  $l_{a}$, $l_{b}$ are homogeneous Lie polynomials of degrees
$a$, $b$ respectively. Suppose that $(b-a)/{\rm gcd}(a,b)$
is an odd integer. We aim at showing that $L_{b}({\bf X})=\|l_{b}\|$.

For each $k\geqslant1,$ we write 
\begin{equation}
\pi_{bk}\left(\exp(l_{a}+l_{b})\right)=\frac{l_{b}^{\otimes k}}{k!}+Q,\label{eq: Expansion}
\end{equation}
where the exponential is now taken over $T((V)),$ and $Q$ is sum
of all remaining terms in the expansion. The key step is to show that,
if $(b-a)/{\rm gcd}(a,b)$ is odd, then $l_{b}^{\otimes k}$ and $Q$
are orthogonal for all large $k$. This can be proved by making use
of an anti-automorphism on the tensor algebra together with symmetry
properties of the signature expansion. The orthogonality property
clearly leads to the lower bound 
\[
\|\pi_{bk}(\exp(l_{a}+l_{b}))\|\geqslant\frac{\|l_{b}\|^{k}}{k!}.
\]
Combining with the general upper bound given by Theorem \ref{thm: upper},
the result then follows.

To prove (\ref{eq: Expansion}), first consider the linear map $\alpha:T(V)\rightarrow T(V)$
induced by 
\[
\alpha({\rm e}_{i_{1}}\otimes\cdots\otimes{\rm e}_{i_{m}})=(-1)^{m}{\rm e}_{i_{m}}\otimes\cdots\otimes{\rm e}_{i_{1}}.
\]
By definition, $\alpha$ is an anti-involution, i.e. $\alpha(\xi\otimes\eta)=\alpha(\eta)\otimes\alpha(\xi)$
and $\alpha^{2}={\rm Id}$. In addition, for any $\xi,\eta\in T(V),$
we have $\langle\alpha(\xi),\alpha(\eta)\rangle=\langle\xi,\eta\rangle$.
A crucial property of $\alpha$ is that $\alpha(l)=-l$ for any Lie
polynomial $l$. The notion of $\alpha$ and the above properties can
be found in \cite{Reutenauer93}, Chapter 1. An immediate
consequence of using the anti-involution $\alpha$ is the following
lemma. Recall that the \textit{symmetrized product} of $\xi_{1},\cdots,\xi_{n}\in T(V)$
is defined by 
\[
{\rm Sym}(\xi_{1},\cdots,\xi_{n})\triangleq\frac{1}{n!}\sum_{\sigma\in{\cal S}_{n}}\xi_{\sigma(1)}\otimes\cdots\otimes\xi_{\sigma(n)},
\]
where ${\cal S}_{n}$ is the permutation group of order $n$. For
convenience, we also define the \textit{reduced symmetrized product
}
\[
{\rm R{\rm Sym}}(\underbrace{\xi_{1},\cdots,\xi_{1}}_{k_{1}\ {\rm times}},\cdots,\underbrace{\xi_{n},\cdots,\xi_{n}}_{k_{n}\ {\rm times}})\triangleq\frac{1}{k_{1}!\cdots k_{n}!}{\rm Sym}(\underbrace{\xi_{1},\cdots,\xi_{1}}_{k_{1}\ {\rm times}},\cdots,\underbrace{\xi_{n},\cdots,\xi_{n}}_{k_{n}\ {\rm times}}).
\]

\begin{lem}
\label{lem: Orth}Let $l_{0},l_{1},\cdots,l_{n}$ be Lie polynomials
and $k\geqslant1.$ If $k+n$ is an odd integer, then 
\[
\langle l_{0}^{\otimes k},{\rm Sym}(l_{1},\cdots,l_{n})\rangle=0.
\]
The same result  holds for the reduced symmetrized product.
\end{lem}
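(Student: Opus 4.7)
The plan is to prove the lemma by a single symmetry argument based on the anti-involution $\alpha$ and the inner-product invariance $\langle \alpha(\xi),\alpha(\eta)\rangle=\langle \xi,\eta\rangle$. The idea is that applying $\alpha$ to both entries of the inner product multiplies the result by an explicit sign $(-1)^{k+n}$, so that when $k+n$ is odd the inner product must vanish.

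More concretely, I would first compute the action of $\alpha$ on $l_0^{\otimes k}$. Since $\alpha$ is an anti-homomorphism and $\alpha(l_0)=-l_0$, iterated application gives
\[
\alpha(l_0^{\otimes k})=\alpha(l_0)\otimes\cdots\otimes\alpha(l_0)=(-1)^{k}l_0^{\otimes k},
\]
where the reversal built into the anti-homomorphism has no effect since all tensor factors coincide. Next I would compute $\alpha$ on a symmetrised product. For each $\sigma\in\mathcal{S}_n$,
\[
\alpha(l_{\sigma(1)}\otimes\cdots\otimes l_{\sigma(n)})=\alpha(l_{\sigma(n)})\otimes\cdots\otimes\alpha(l_{\sigma(1)})=(-1)^{n}\,l_{\sigma(n)}\otimes\cdots\otimes l_{\sigma(1)}.
\]
Summing over $\sigma$ and reindexing by $\sigma':=\sigma\circ\mathrm{rev}$, which is a bijection of $\mathcal{S}_n$, yields
\[
\alpha\bigl(\mathrm{Sym}(l_1,\dots,l_n)\bigr)=(-1)^{n}\mathrm{Sym}(l_1,\dots,l_n).
\]

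Combining these two identities with the isometry property of $\alpha$, I obtain
\[
\langle l_0^{\otimes k},\mathrm{Sym}(l_1,\dots,l_n)\rangle=\langle \alpha(l_0^{\otimes k}),\alpha(\mathrm{Sym}(l_1,\dots,l_n))\rangle=(-1)^{k+n}\langle l_0^{\otimes k},\mathrm{Sym}(l_1,\dots,l_n)\rangle,
\]
so the inner product equals its own negative whenever $k+n$ is odd, hence vanishes. The statement for the reduced symmetrised product follows immediately, since $\mathrm{RSym}$ differs from $\mathrm{Sym}$ only by a positive combinatorial scalar $1/(k_1!\cdots k_n!)$.

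There is no real obstacle here; the only point requiring care is the bookkeeping of signs, particularly checking that the reversal induced by the anti-homomorphism property is absorbed into the symmetrisation (for $\mathrm{Sym}$) and is trivial (for $l_0^{\otimes k}$), so that the only surviving signs are the $(-1)^k$ and $(-1)^n$ coming from $\alpha(l_i)=-l_i$. Once these are correctly tracked, the conclusion is immediate.
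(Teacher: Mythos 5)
Your argument is correct and is essentially identical to the paper's proof: both use the anti-involution $\alpha$, the identities $\alpha(l_0^{\otimes k})=(-1)^k l_0^{\otimes k}$ and $\alpha(\mathrm{Sym}(l_1,\dots,l_n))=(-1)^n\mathrm{Sym}(l_1,\dots,l_n)$, and the isometry property to force the inner product to equal $(-1)^{k+n}$ times itself. Your write-up simply spells out the reversal bookkeeping that the paper leaves implicit.
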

\begin{proof}
Observe that 
\[
\alpha\left({\rm Sym}(l_{1},\cdots,l_{n})\right)=(-1)^{n}{\rm Sym}(l_{1},\cdots,l_{n}).
\]
Therefore, we have 
\begin{align*}
\langle l_{0}^{\otimes k},{\rm Sym}(l_{1},\cdots,l_{n})\rangle & =\langle\alpha(l_{0}^{\otimes k}),\alpha({\rm Sym}(l_{1},\cdots,l_{n}))\rangle\\
 & =(-1)^{k+n}\langle l_{0}^{\otimes k},{\rm Sym}(l_{1},\cdots,l_{n})\rangle.
\end{align*}
The first assertion follows since $k+n$ is odd by assumption. The
second assertion is obvious.
\end{proof}
Now we are in a position to give the proof of Theorem \ref{thm: FreeLie}.

\begin{proof}[Proof of Theorem \ref{thm: FreeLie}]

We express the remainder $Q$ in the expression (\ref{eq: Expansion})
in a more explicit way:
\begin{equation}
Q=\sum_{x>0,\ ax+by=bk}{\rm RSym}(\underbrace{l_{a},\cdots,l_{a}}_{x\ {\rm times}},\underbrace{l_{b},\cdots,l_{b}}_{y\ {\rm times}}).\label{eq: Q}
\end{equation}
An important observation is that, for each summand, since $ax+by=bk,$
we have
\[
\frac{a}{r}x=\frac{b}{r}(k-y)
\]
where $r\triangleq{\rm gcd}(a,b),$ showing that $b/r\mid x$ and
thus $x\geqslant b/r.$ For the equation to make sense, one also needs
$k\geqslant a/r.$

Firstly, if $x=b/r$, then $y=k-a/r$. In this case, we have 
\[
k+x+y=2k+\frac{b-a}{r},
\]
which is an odd integer by assumption. According to Lemma \ref{lem: Orth},
we conclude that 
\begin{equation}
\langle l_{b}^{\otimes k},{\rm RSym}(\underbrace{l_{a},\cdots,l_{a}}_{x\ {\rm times}},\underbrace{l_{b},\cdots,l_{b}}_{y\ {\rm times}})\rangle=0.\label{eq: Orth}
\end{equation}

Next, consider a given $x>b/r$ from the sum in (\ref{eq: Q}). For
each single term $\xi$ in the corresponding reduced symmetrized product,
$\xi$ can be uniquely written as $\xi=\xi_{1}\otimes\xi_{2},$ where
$\xi_{1}$ contains exactly $b/r$ number of $l_{a}$'s and $\xi_{2}$
starts with $l_{a}.$ Let $S$ be the set of all such $\xi_{2}$'s
arising in this way. Denote $y(\xi_{2})$ as the number of $l_{b}$'s
in each given $\xi_{2}\in S.$ Then the reduced symmetrized product
can further be written as 
\begin{align*} 
 & {\rm RSym}(\underbrace{l_{a},\cdots,l_{a}}_{x\ {\rm times}},\underbrace{l_{b},\cdots,l_{b}}_{y\ {\rm times}})\\
 & =\sum_{\xi_{2}\in S}\left(\frac{b}{r}+y-y(\xi_{2})!\right)\cdot{\rm RSym}(\underbrace{l_{a},\cdots,l_{a}}_{b/r\ {\rm times}},\underbrace{l_{b},\cdots,l_{b}}_{y-y(\xi_{2})\ {\rm times}})\otimes\xi_{2}. 
\end{align*}
For each $\xi_{2}\in S,$ by writing $k_{1}\triangleq a/r+y-y(\xi_{2}),$
Lemma \ref{lem: Orth} again implies that 
\begin{align*}
 \langle l_{b}^{\otimes k_{1}},{\rm RSym}(\underbrace{l_{a},\cdots,l_{a}}_{b/r\ {\rm times}},\underbrace{l_{b},\cdots,l_{b}}_{y-y(\xi_{2})\ {\rm times}})\rangle\cdot\langle l_{b}^{\otimes(k-k_{1})},\xi_{2}\rangle=0,
\end{align*}
since 
\[
k_{1}+\frac{b}{r}+y-y(\xi_{2})=2k_{1}+\frac{b-a}{r}
\]
is an odd integer. Therefore, (\ref{eq: Orth}) holds for the reduced
symmetrized product corresponding to the given $x$.

It follows that $l_{b}^{\otimes k}$ is orthogonal to $Q$ provided
$k\geqslant a/r,$ and the proof of the theorem is now complete.

\end{proof}

\section*{Acknowledgement}

HB and NS are supported by EPSRC grant EP/R008205/1. XG is  supported in part by NSF grant DMS1814147.

\section*{Appendix: Some properties of pure rough paths}\label{sec:Appendix}
\addcontentsline{toc}{section}{\nameref{sec:Appendix}}

In this section, we prove the two properties of pure rough paths stated
in Proposition \ref{prop: LocalPVar}
and Proposition \ref{prop: SigPure} respectively in Section \ref{subsec: Pure}.

\begin{proof}[Proof of Proposition \ref{prop: LocalPVar}]

Let ${\bf X}_{t}=\exp(tl)$ ($0\leqslant t\leqslant1$) be a pure
$m$-rough path, where $l\in{\cal L}^{(m)}(V)$ with $l_{m}\triangleq\pi_{m}(l)\neq0$.
For each $1\leqslant k\leqslant m,$ the degree $k$ component of
${\bf X}_{s,t}\triangleq{\bf X}_{s}^{-1}\otimes{\bf X}_{t}$ has the
form
\begin{align*}
X_{s,t}^{k} & =\sum_{r=0}^{m}\frac{(t-s)^{r}}{r!}\pi_{k}(l^{\otimes r})=\sum_{r=1}^{k}(t-s)^{r}\xi_{r}^{(k)},
\end{align*}
where $\xi_{r}^{(k)}\in V^{\otimes k}$ are tensors constructed from
$\pi_{1}(l),\cdots,\pi_{m}(l)$. It follows that 
\begin{equation}
\|X_{s,t}^{k}\|\leqslant\sum_{r=1}^{k}|t-s|^{r}\|\xi_{r}^{(k)}\|\leqslant C_{{\bf X}}\cdot|t-s|,\label{eq: EstX^k}
\end{equation}
where $C_{{\bf X}}$ denotes a constant depending only on ${\bf X}$. This implies
that ${\bf X}$ is an $m$-rough path in the sense of Definition \ref{def: RoughPath}.

Now if $k<m,$ from (\ref{eq: EstX^k}) we have
\[
\|X_{s,t}^{k}\|^{\frac{m}{k}}\leqslant C_{{\bf X}}^{\frac{m}{k}}\cdot|t-s|^{\frac{m}{k}},
\]
showing that 
\[
\lim_{{\rm mesh({\cal P})\rightarrow0}}\sum_{t_{i}\in{\cal P}}\|X_{t_{i-1},t_{i}}^{k}\|^{\frac{m}{k}}=0.
\]
If $k=m,$ notice that $\xi_{1}^{(m)}=l_{m}.$ Therefore, given a
finite partition ${\cal P}$ of $[0,1],$ we have
\[
\|X_{t_{i-1},t_{i}}^{m}\|=(t_{i}-t_{i-1})\cdot\left\|l_{m}+(t_{i}-t_{i-1})\xi_{2}^{(m)}+\cdots+(t_{i}-t_{i-1})^{m-1}\xi_{m}^{(m)}\right\|.
\]
It is now elementary to see that 
\[
\lim_{{\rm mesh}({\cal P})\rightarrow0}\sum_{t_{i}\in{\cal P}}\|X_{t_{i-1},t_{i}}^{m}\|=\|l_{m}\|.
\]
Therefore, we conclude that the local $m$-variation of ${\bf X}$
equals $\|l_{m}\|.$

\end{proof}
\begin{rem}
This property apparently extends to the non-geometric setting, i.e.
for the case when $l\in T^{(m)}(V).$ Indeed, even more holds true with essentially the same proof. Let $\mathbf{X}_t=\exp(L(t))\in T^{(m)}(V)$, where $L(t)$ is a bounded variation path in $T^{(m)}(V)$. Then \[
\lim_{\delta\rightarrow0}\sum_{k=1}^{m}\left(\inf_{{\rm mesh}({\cal P})\leqslant\delta}\sum_{t_{i}\in{\cal P}}\|X_{t_{i-1},t_{i}}^{k}\|^{\frac{m}{k}}\right)^{\frac{k}{m}}=\|\pi_{m}(L)\|_{1\text{-var}}.
\]
\end{rem}
\begin{proof}[Proof of Proposition \ref{prop: SigPure}]

Let $\mathbf{X}_{t}=\exp(tl)\in G^{(m)}(V)$ be a pure $m$-rough
path. For any $n\geqslant m,$ it is not hard to see that the multiplicative
functional ${\bf X}_{s,t}^{(n)}\triangleq\exp((t-s)l)\in T^{(n)}(V)$
has finite total $m$-variation, where the exponential is taken over
$T^{(n)}(V).$ Therefore, ${\bf X}^{(n)}$ is the unique extension
of ${\bf X}$ to $T^{(n)}(V)$ given by Theorem \ref{thm: LyonsExt}.
By the definition of signature, $\exp(l)$ is the signature of ${\bf X}$
where the exponential is now taken over $T((V)).$ The second part of
the proposition is a direct consequence of the uniqueness result for
signature in \cite{BGLY16}.

\end{proof}

\end{document}